\newcommand{\R}{\mathbb{R}}
\newcommand{\N}{\mathbb{N}}
\newcommand{\Z}{\mathbb{Z}}
\newcommand{\lr}[1]{\langle #1 \rangle}
\newcommand{\Lr}[1]{\Big\langle #1 \Big\rangle}
\newcommand{\eps}{\varepsilon}
\newcommand{\wt}[1]{\widetilde{#1}}
\newcommand{\wh}[1]{\widehat{#1}}
\newcommand{\Sc}{\mathcal{S}}
\newcommand{\Lc}{\mathcal{L}}
\newcommand{\J}{\mathcal{J}}
\newcommand{\vr}{\varrho}
\newcommand{\ol}[1]{{\overline{#1}}}
\newcommand{\uh}{u^{\rm{hyp}}}
\newcommand{\uhp}{u^{\rm{hyp},+}}
\newcommand{\ue}{u^{\rm{ell}}}
\newcommand{\uep}{u^{\rm{ell},+}}
\newcommand{\PN}{P_{\frac{N}{2^{\delta}} \le \cdot \le 2^{\delta}N}}
\newcommand{\dx}{\partial_x}
\newcommand{\dy}{\partial_y}
\newcommand{\dt}{\partial_t}
\newcommand{\dz}{\partial_{\zeta}}
\newcommand{\de}{\partial_{\eta}}
\newcommand{\err}{\bm{\mathrm{err}}}
\newcommand{\Or}{\Omega_{\rho} (t)}
\newcommand{\Oo}{\Omega_0 (t)}
\newcommand{\Orz}{\Omega_{\rho}^0 (t)}
\newcommand{\Xp}{\mathfrak{X}^+ (t)}
\newcommand{\Xn}{\mathfrak{X}^- (t)}
\newcommand{\Xz}{\mathfrak{X}^0 (t)}
\newcommand{\Xf}{\wh{\mathfrak{X}}^- (t)}
\newtheorem{thm}{Theorem}[section]
\newtheorem{prop}[thm]{Proposition}
\newtheorem{lem}[thm]{Lemma}
\newtheorem{cor}[thm]{Corollary}
\theoremstyle{remark}
\newtheorem{rmk}[thm]{Remark}
\DeclareMathOperator{\supp}{supp}
\numberwithin{equation}{section}
\date{\today}
\title[Long-time behavior of solutions to 5mKdV-type equation]{Long-time behavior of solutions to the fifth-order modified KdV-type equation}
\author[M. Okamoto]{Mamoru Okamoto}
\address{Division of Mathematics and Physics, Faculty of Engineering, Shinshu University, 4-17-1 Wakasato, Nagano City 380-8553, Japan}
\email{m\_okamoto@shinshu-u.ac.jp}
\subjclass[2010]{35Q53, 35B40}
\keywords{fifth-order mKdV, asymptotic behavior, modified scattering}
\date{\today}
\begin{document}

\begin{abstract}
We consider the long-time behavior of solutions to the fifth-order modified KdV-type equation.
Using the method of testing by wave packets, we prove the small-data global existence and modified scattering.
We derive the leading asymptotic in both the self-similar and oscillatory regions.
\end{abstract}

\maketitle

\section{Introduction}

We consider the Cauchy problem for the following equation of fifth-order modified Korteweg-de Vries (mKdV) type:
\begin{equation} \label{5mKdVg}
\begin{aligned}
& \dt u - \frac{1}{5} \dx^5 u = \dx \left( c_1 u^2 \dx^2 u + c_2 u (\dx u)^2 + c_3 u^5 \right), \\
& u(0,x) = u_0(x) ,
\end{aligned}
\end{equation}
where $u = u(t,x) : \R \times \R \rightarrow \R$ is an unknown function and $u_0$ is a given function.
Here, $c_1,c_2, c_3$ are real constants.
The equation with $c_1=c_2= -2$ and $c_3=\frac{6}{5}$ is known as the fifth-order mKdV equation, which is the second equation from the mKdV hierarchy:
\begin{equation} \label{5mKdVc}
\dt u - \frac{1}{5} \dx^5 u = 6 u^4 \dx u - \dx \left(u \dx^2 (u^2) \right).
\end{equation}
This equation is completely integrable in the sense that there are Lax pair formulations, and enjoy infinite number of conservation quantities.

Well-posedness of the Cauchy problem for \eqref{5mKdVg} has been well studied.
Kenig et al. \cite{KPV94} studied the local-in-time well-posedness of the higher-order KdV-type equations:
\[
\dt u + \dx^{2j+1} u + P(u, \dx u, \dots, \dx^{2j} u)=0,
\]
where $P$ is a polynomial having no constant and linear terms.
They proved local well-posedness for the initial data in the weighted Sobolev space $H^{s,m} (\R) := H^s(\R) \cap L^2 (|x|^{2m} dx)$ for some (large) $s, m \ge 0$.
Kwon \cite{Kwo08} proved local well-posedness for \eqref{5mKdVg} in $H^s (\R)$ with $s \ge \frac{3}{4}$.
Moreover, the flow map of the fifth-order mKdV-type equations fails to be uniformly continuous for $s < \frac{3}{4}$.
This implies that the regularity $\frac{3}{4}$ is the minimal regularity threshold for which the well-posedness result can be solved via an iteration method.
Gr\"{u}nrock \cite{Gru10} investigated well-posedness for the higher-order mKdV equations in $\mathcal{F} L^p (\R)$-based spaces.

In the case of the mKdV equation, well-posedness has been extensively studied (see \cite{KPV89, KPV93, CKSTT03, Guo09, Kis09} and the references therein).
Because the mKdV equation is complete integrable, inverse scattering techniques as in Deift and Zhou \cite{DeiZho93} show global existence and asymptotic behavior.
Hayashi and Naumkin \cite{HayNau99, HayNau01} derived modified asymptotics without relying on complete integrability.
By developing the factorization technique, in \cite{HayNau16} they improved the previous result.
Harrop-Griffiths \cite{HarG16} proved the long-time behavior of solutions to the mKdV equation.
His result used the method of testing by wave packets, developed in the work of Ifrim and Tataru \cite{IfrTat15} on the cubic nonlinear Schr\"{o}dinger equation (see also \cite{IfrTat16, HIT}).
This method in some sense interpolates between the physical and the Fourier side analysis.
In this paper, we employ the method of testing by wave packets to show global existence and asymptotic behavior of solutions to the fifth-order mKdV-type equation.

Hayashi and Naumkin \cite{HayNau98} proved that the solution to the generalized KdV equation
\[
\dt u - \frac{1}{3} \dx^3 u = u^p \dx u
\]
is asymptotically free for $p>3$, namely that there exists a linear solution $v^+$ such that $u(t) \to v^+(t)$ in $L^2 (\R)$ as $t \to \infty$.
On the other hand, asymptotic behavior of the solution to the mKdV equation ($p=3$) differs from that of the linear solutions.
Hence, we call the nonlinearity of the mKdV equation critical in the sense of the large-time behavior.
As in the mKdV equation, while a solution $u$ to \eqref{5mKdVg} exists globally, we expect the asymptotic behavior of $u$ to differ from that of the linear solutions.
To explain this phenomenon, we roughly derive the asymptotic behavior of linear solutions.
We note that the linear solution is written as follows:
\[
e^{\frac{1}{5} t \dx^5} u_0 (x)
= (\mathcal{F}^{-1} [e^{\frac{1}{5}it \xi^5}] \ast u) (x), \quad
\mathcal{F}^{-1} [e^{\frac{1}{5}it \xi^5}] = \frac{1}{\sqrt{2\pi}} \int_{\R} e^{i(x\xi+\frac{1}{5}t \xi^5)} d\xi.
\]
Because $\partial_{\xi} (x\xi+\frac{1}{5}t \xi^5) = x + t \xi^4$ becomes zero if and only if $\xi = \pm (\frac{|x|}{t})^{\frac{1}{4}}$ and $x<0$, the stationary phase method implies that the linear solution $e^{\frac{1}{5} t \dx^5}u_0 (x)$ decays rapidly as $t^{-\frac{1}{5}} x \to +\infty$ and oscillates as $t^{-\frac{1}{5}} x \to -\infty$.
Moreover, in the self-similar region $t^{-\frac{1}{5}} |x| \lesssim 1$, we have
\[
e^{\frac{1}{5} t \dx^5}u_0 (x)
= t^{-\frac{1}{5}} Q_0 (t^{-\frac{1}{5}}x) \int_{\R} u_0 (y) dy + \text{error},
\]
where $Q_0$ is a solution to $Q_0'''' + y Q_0=0$.
In the oscillatory region $t^{-\frac{1}{5}} x \to -\infty$, there exists a constant $c$ such that
\[
u(t) = c t^{-\frac{1}{5}} (t^{-\frac{1}{5}} |x|)^{-\frac{3}{8}} \Re \left( \wh{u_0} (t^{-\frac{1}{4}} |x|^{\frac{1}{4}}) e^{i \phi (t,x)} \right) + \text{error},
\]
where the phase function is given by
\begin{equation} \label{phase}
\phi (t,x) = -\frac{4}{5} t^{-\frac{1}{4}} |x|^{\frac{5}{4}} + \frac{\pi}{4}.
\end{equation}
This observation implies that for smooth initial data with $\| u_0 \|_{H^{0,1}} \le \eps$ and $k=0,1,2,3$, we have
\[
| \dx^k u(t,x) | \lesssim \eps t^{-\frac{k+1}{5}} \lr{t^{-\frac{1}{5}}x}^{\frac{k}{4}-\frac{3}{8}}.
\]
In particular, $| u \dx^3 u | + | \dx u \dx^2 u | + |u^3 \dx u| \lesssim \eps^2 t^{-1}$ holds true.
We expect solutions to \eqref{5mKdVg} to have the same pointwise estimates as linear solutions above, and hence this bound causes critical phenomena in the sense of the large-time behavior.

Setting $(c_1,c_2) = \alpha (2,3)$ and $c_3= \beta$ for real constants $\alpha$ and $\beta$, we focus on the following Cauchy problem:
\begin{equation} \label{5mKdV}
\begin{aligned}
& \dt u - \frac{1}{5} \dx^5 u = \dx \left( \alpha (2 u^2 \dx^2 u + 3 u (\dx u)^2) + \beta u^5 \right), \\
& u(0,x) = u_0(x).
\end{aligned}
\end{equation}
This nonlinearity cancels out a part that is difficult to handle in the energy estimate.
Although \eqref{5mKdV} does not include the fifth-order mKdV equation \eqref{5mKdVc}, this is the first result of asymptotic behavior for the fifth-order mKdV-type equation with critical nonlinearity in the sense of the large-time behavior.

\begin{thm} \label{thm}
Assume that the initial datum $u_0$ at time $0$ satisfies
\[
\| u_0 \|_{H^{2,1}} \le \eps \ll 1.
\]
Then, there exists a unique global solution $u$ to \eqref{5mKdV} with $e^{-\frac{1}{5} t \dx^5} u_0 \in C(\R; H^{2,1}(\R))$ satisfying the estimates
\begin{equation} \label{est:u_infty}
\| \lr{t^{-\frac{1}{5}}x}^{-\frac{k}{4}+\frac{3}{8}} \dx^k u(t) \|_{L^{\infty}} \lesssim \eps t^{-\frac{k+1}{5}}
\end{equation}
for $t \ge 1$ and $k=0,1,2,3$.
Moreover, we have the following asymptotic behavior as $t \to +\infty$.

In the decaying region $\Xp := \{ x \in \R_+ \colon t^{-\frac{1}{5}} |x| \gtrsim t^{\frac{4}{5}(\frac{1}{10}-\eps)} \}$, we have
\[
\| t^{\frac{1}{5}} \lr{t^{-\frac{1}{5}} x}^{\frac{7}{8}} u \|_{L^{\infty} (\Xp)} \lesssim \eps, \quad
\| t^{\frac{1}{10}} \lr{t^{-\frac{1}{5}} x} u \|_{L^2 (\Xp)} \lesssim \eps.
\]
In the self-similar region $\Xz := \{ x \in \R \colon t^{-\frac{1}{5}} |x| \lesssim t^{\frac{4}{5}(\frac{1}{10}-\eps)} \}$, there exists a solution $Q =Q(y)$ to the nonlinear ordinary differential equation
\begin{equation} \label{eq:self}
Q'''' + y Q + 5\alpha (2 Q^2 Q'' + 3 Q (Q')^2) + 5\beta Q^5 =0,
\end{equation}
satisfying $\| Q \|_{L^{\infty}_y} \lesssim \eps$ and we have the estimates
\begin{align*}
& \| u(t) - t^{-\frac{1}{5}} Q(t^{-\frac{1}{5}} x) \|_{L^{\infty} (\Xz)} \lesssim \eps t^{-\frac{7}{10} (\frac{27}{70} - \eps)}, \\
& \| u(t) - t^{-\frac{1}{5}} Q(t^{-\frac{1}{5}} x) \|_{L^2 (\Xz)} \lesssim \eps t^{-\frac{4}{5} (\frac{9}{40} - \eps)}.
\end{align*}
In the oscillatory region $\Xn := \{ x \in \R_- \colon t^{-\frac{1}{5}} |x| \gtrsim t^{\frac{4}{5}(\frac{1}{10}-\eps)} \}$, there exists a unique (complex-valued) function $W$ satisfying $W(\xi) = \ol{W(-\xi)}$ and $\| W \|_{L^{\infty}} \lesssim \eps$ such that
\begin{align*}
u(t,x) = & t^{-\frac{1}{5}} (t^{-\frac{1}{5}} |x|)^{-\frac{3}{8}} \Re \left\{ W \left( t^{-\frac{1}{4}} |x|^{\frac{1}{4}} \right) e^{i \phi (t,x) - \frac{3}{4}i \alpha | W ( t^{-\frac{1}{4}} |x|^{\frac{1}{4}} ) |^2 \log (t^{-\frac{1}{4}} |x|^{\frac{5}{4}})} \right\}
\\ & +\err_x,
\end{align*}
where the error satisfies the estimates
\[
\| t^{\frac{1}{5}} (t^{-\frac{1}{5}} |x|)^{\frac{9}{16}} \err_x \|_{L^{\infty} (\Xn)} \lesssim \eps, \quad
\| t^{\frac{1}{10}} (t^{-\frac{1}{5}} |x|)^{\frac{3}{8}} \err_x \|_{L^2 (\Xn)} \lesssim \eps.
\]
In the corresponding frequency region $\Xf := \{ \xi \in \R \colon t^{\frac{1}{5}} |\xi| \gtrsim t^{\frac{1}{5} (\frac{1}{10}-\eps)} \}$, we have
\[
\wh{u} (t,\xi) = W(\xi) e^{\frac{1}{5}it\xi^5 - \frac{3}{4}i \alpha |W(\xi)|^2 \log (t\xi^5)} + \err_{\xi},
\]
where the error satisfies
\[
\| (t^{\frac{1}{5}} \xi)^{\frac{3}{4}} \err_{\xi} \|_{L^{\infty} (\Xf)} \lesssim \eps, \quad
\| t^{\frac{1}{10}} (t^{\frac{1}{5}} \xi)^{\frac{3}{2}} \err_{\xi} \|_{L^2 (\Xf)} \lesssim \eps.
\]
\end{thm}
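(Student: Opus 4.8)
The plan is a bootstrap/continuity argument on $[1,T]$ (the interval $[0,1]$ being covered by the local theory of Kwon and of Kenig--Ponce--Vega together with smallness), controlling $u$ through two kinds of quantities: a \emph{hyperbolic}, energy-type bound on $\|u(t)\|_{H^2}$ together with the weighted quantity $\|\J(t)u(t)\|_{L^2}$, where $\J(t):=e^{\frac{1}{5}t\dx^5}\,x\,e^{-\frac{1}{5}t\dx^5}=x+t\dx^4$ is the vector field commuting with the linear flow; and a \emph{dispersive} bound on the Fourier profile $\wh u(t,\xi)$, for which the testing-by-wave-packets machinery is the tool. On $[1,T]$ I would assume $\|u(t)\|_{H^2}+t^{-C\eps}\|\J(t)u(t)\|_{L^2}\le C_0\eps$ and $\|\wh u(t)\|_{L^\infty_\xi}\le C_0\eps$, and aim to recover these with the constant halved while simultaneously extracting the asymptotics. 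The pointwise bounds \eqref{est:u_infty} are then \emph{not} a consequence of Sobolev embedding; rather one writes $\dx^k u=\frac{1}{\sqrt{2\pi}}\int(i\xi)^k e^{ix\xi}\wh u(t,\xi)\,d\xi$ and applies the stationary phase method about the stationary point $\xi_\ast=(|x|/t)^{1/4}$ of $x\xi+\frac{1}{5}t\xi^5$ (present only for $x<0$), the weights $\lr{t^{-1/5}x}^{k/4-3/8}$ being precisely those produced by $\xi_\ast^k$ and $|\partial_\xi^2(x\xi+\frac{1}{5}t\xi^5)|^{-1/2}\sim(t\xi_\ast^3)^{-1/2}$, degenerating into Airy behavior as $\xi_\ast\to0$.

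I would begin with a phase-space decomposition $u=\uh+\ue$ into a part concentrated (in $(x,\xi)$) near the light cone $x+t\xi^4=0$ and an elliptic part supported where $|x+t\xi^4|$ is large — including the forward region $x\gtrsim t^{4/5}|\xi|^4$ and the lowest frequencies — combined with a dyadic Littlewood--Paley splitting. On $\ue$ the symbol $x+t\xi^4$ never vanishes, so repeated integration by parts gives rapid decay: this yields at once the bounds in the decaying region $\Xp$ and contributes a time-integrable $O(\eps)$ error elsewhere. I further split $\uh=\uhp+\uhn$ by the sign of the frequency, each piece carrying the single stationary point $x=-tv^4$, and for $v>0$ I construct a normalized wave packet $\Psi_v(t,x)$, essentially $a\!\left(\tfrac{x+tv^4}{(tv^3)^{1/2}}\right)e^{i(xv+\frac{1}{5}tv^5)}$ refined by lower-order corrections so that $(\dt-\frac{1}{5}\dx^5)\Psi_v=\err$ is small and concentrated. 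Setting $\gamma(t,v):=\lr{u(t),\Psi_v(t)}$, a wave-packet testing lemma shows that $\gamma(t,v)$ agrees with the suitably normalized restriction of $\wh u(t,v)$ to the hyperbolic frequency range up to errors controlled by the energy norm; the conjugation symmetry $\wh u(-\xi)=\ol{\wh u(\xi)}$ (valid since $u$ is real) then lets one treat $v<0$ by complex conjugation and yields the stated relation $W(\xi)=\ol{W(-\xi)}$.

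Differentiating and using the equation gives
\[
\dt\gamma(t,v)=\lr{u,(\dt-\tfrac{1}{5}\dx^5)\Psi_v}+\Lr{\dx\!\big(\alpha(2u^2\dx^2 u+3u(\dx u)^2)+\beta u^5\big),\Psi_v},
\]
the first term being an error by construction. In the second I substitute the wave-packet description of $u$ near $x=-tv^4$ and separate resonant from non-resonant interactions: the non-resonant parts (carrying phases $e^{\pm 3i\psi},e^{\pm 5i\psi},\dots$), the contribution of $\ue$, and the entire quintic $\beta u^5$ term all decay fast enough that, after a normal-form integration by parts in time, they give time-integrable $O(\eps)$ errors. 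The single slowly-decaying term is the resonant part of $\alpha(2u^2\dx^2 u+3u(\dx u)^2)$; using $\dx^2 u\approx-v^2 u$ near the cone, the resonant parts of $u^2\dx^2 u$ and $u(\dx u)^2$ are proportional to $-3v^2|A|^2A$ and $v^2|A|^2A$, so that $2\cdot(-3)+3\cdot(+1)=-3$, and together with the outer $\dx\sim iv$ and the normalization of $\Psi_v$ this produces an ODE of the shape $\dt\gamma(t,v)=-\frac{3i\alpha}{4t}|\gamma(t,v)|^2\gamma(t,v)+(\text{time-integrable }O(\eps)\text{ errors})$. From this: $|\gamma|$ is almost conserved, hence $\|\gamma\|_{L^\infty_v}\lesssim\eps$ (the key improvement on the dispersive side); the limit $W(v):=\lim_{t\to\infty}\gamma(t,v)\exp\!\big(\tfrac{3i\alpha}{4}|\gamma(t,v)|^2\log(tv^5)\big)$ exists with $\|W\|_{L^\infty}\lesssim\eps$; and one obtains quantitative $L^\infty_\xi$ and $L^2_\xi$ convergence rates on the frequency annulus $\Xf$, up to the small polynomial loss. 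Undoing the $\gamma\leftrightarrow\wh u$ comparison gives the Fourier-side asymptotics on $\Xf$, and applying stationary phase to $u(t,x)=\frac{1}{\sqrt{2\pi}}\int e^{ix\xi}\wh u(t,\xi)\,d\xi$ — with a genuine stationary point in $\Xn$, two coalescing stationary points (the self-similar regime) in $\Xz$, and none in $\Xp$ — converts these into the physical-space statements; the self-similar profile $Q$ is identified by inserting $u=t^{-1/5}Q(t^{-1/5}x)$ into \eqref{5mKdV} and integrating once in $x$, which yields \eqref{eq:self}.

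It remains to close the energy side, which I expect to be the main obstacle. Computing $\frac{d}{dt}\|u\|_{H^2}^2$ and $\frac{d}{dt}\|\J(t)u\|_{L^2}^2$, the a priori most dangerous contributions are the top-order terms from $\dx\!\big(\alpha(2u^2\dx^2 u+3u(\dx u)^2)\big)$, which threaten a loss of derivatives — and the danger is genuine, since $\J(t)=x+t\dx^4$ carries four derivatives against a nonlinearity with only three, so commuting $\J$ past the nonlinear terms must be done carefully. Here the choice $(c_1,c_2)=\alpha(2,3)$ is decisive: it is exactly the ratio for which these top-order terms, after symmetrization and integration by parts, either cancel or collapse to the benign shape $\int u\,\dx u\,(\dx^{s+1}u)^2$, which is then absorbed using the dyadic frequency localization (so that the four derivatives of $\J$ behave like a bounded factor on each Littlewood--Paley piece), the pointwise decay $\|u\,\dx u\|_{L^\infty}\lesssim\eps^2 t^{-3/5}$, and the extra smallness $\J$ supplies away from the cone; the outcome is $\frac{d}{dt}\|u\|_{H^2}^2\lesssim(\text{integrable})\,\|u\|_{H^2}^2$ and $\frac{d}{dt}\|\J(t)u\|_{L^2}^2\lesssim t^{-1+C\eps}\|\J(t)u\|_{L^2}^2$, hence $\|u(t)\|_{H^2}\lesssim\eps$ and $\|\J(t)u(t)\|_{L^2}\lesssim\eps\,t^{C\eps}$, and combined with $\|\wh u(t)\|_{L^\infty}\lesssim\eps$ this recovers all bootstrap assumptions with improved constants, so they hold for all $t\ge1$; uniqueness descends from the local theory. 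A second delicate point, intertwined with the first, is uniform control near the degenerate frequency $\partial_\xi^2(\tfrac{1}{5}\xi^5)=4\xi^3\to0$, where the wave packets spread badly and stationary points collide — this is why the asymptotic statements stay clear of the critical scale $|\xi|\sim t^{-\frac{1}{5}(\frac{9}{10}+\eps)}$ by working in $\Xp,\Xz,\Xn,\Xf$ — and matching the self-similar and oscillatory descriptions across the boundary of $\Xz$.
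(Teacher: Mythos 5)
Your overall architecture --- bootstrap on linear-type pointwise bounds, a hyperbolic/elliptic phase--space splitting, testing with wave packets concentrated on $x=vt$, the resonance count $2\cdot(-3)+3\cdot 1=-3$ producing the cubic ODE for $\gamma$, and the resulting logarithmic phase correction --- is the same as the paper's. There are, however, two concrete gaps.

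First, the energy estimate. You assert that after symmetrization the worst contribution collapses to $\int u\,\dx u\,(\dx^{s+1}u)^{2}$ and call this shape \emph{benign}, to be absorbed via Littlewood--Paley localization and the decay of $\|u\,\dx u\|_{L^{\infty}}$. For $s=2$ this term is $\int u\,\dx u\,(\dx^{3}u)^{2}\,dx$, which lives at the $H^{3}$ level while only $H^{2}$ is controlled: no pointwise decay of $u\,\dx u$ bounds $\|\dx^{3}u\|_{L^{2}}^{2}$, and dyadic localization does not remove the extra derivative. The step as written fails. The paper's resolution is a modified (quartically corrected) energy: one checks that $\tfrac12\dt\int u^{2}(\dx u)^{2}\,dx=-\int u\,\dx u\,(\dx^{3}u)^{2}\,dx+O(\eps t^{-1}\|u\|_{H^{2}}^{2})$, so that $\|\dx^{2}u\|_{L^{2}}^{2}-12\alpha\int u^{2}(\dx u)^{2}\,dx$ has a controllable time derivative. (Relatedly, you locate the role of the ratio $(2,3)$ in the $H^{2}$ estimate, whereas it is really needed so that the analogous derivative-losing term $\int u\,\dx u\,(\dx\Lambda u)^{2}$ never appears in the estimate for $\Lambda u=\dx^{-1}\mathcal{S}u$; the $H^{s}$ correction-term trick works for general $c_{1},c_{2}$.) Your conclusion $\|u(t)\|_{H^{2}}\lesssim\eps$ also overshoots: the nonlinear contributions decay exactly like $\eps t^{-1}\|u\|_{H^{2}}^{2}$ --- critical, not integrable --- so Gronwall yields only $\eps\lr{t}^{C\eps}$, which is what the paper proves and is all that is needed downstream.

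Second, the self-similar region. Inserting the ansatz $u=t^{-1/5}Q(t^{-1/5}x)$ into \eqref{5mKdV} merely derives the ODE \eqref{eq:self}; it neither produces the profile $Q$ nor the stated convergence rates, and stationary phase with coalescing critical points describes only the linear propagator. The paper's argument sets $U(t,y)=t^{1/5}u(t,t^{1/5}y)$, uses the identity $\partial_{t}U=\tfrac15 t^{-1}\dy\big((\Lambda u)(t,t^{1/5}y)\big)$ together with $\|\Lambda u\|_{L^{2}}\lesssim\eps t^{C\eps}$ and a frequency truncation at $|\xi|\sim t^{\rho}$ to show that $U$ converges on the self-similar region, obtains $Q$ as the limit, and identifies the ODE by letting $t\to\infty$ in $\dy^{4}U+yU+5\alpha(2U^{2}\dy^{2}U+3U(\dy U)^{2})+5\beta U^{5}=(\dx\Lambda u)(t,t^{1/5}y)\to0$ in $L^{2}$. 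Some genuinely nonlinear argument of this kind must be supplied; your sketch omits it.
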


By taking the transformation $u (t,x) \mapsto u(-t,-x)$, we obtain the corresponding asymptotic behavior as $t \to -\infty$.

We make some remarks.
For the local-in-time well-posedness, the assumption $u_0 \in H^2(\R)$ is not needed:
in fact, we show that $u_0 \in H^{\frac{3}{4}} (\R)$ is enough in Proposition \ref{prop:WP}.
However, we require more regularity to obtain a global solution (see Remark \ref{rmk:reqH2}).

The large-time asymptotics of solutions in the oscillatory region $\Xn$ have a logarithmic correction in the phase comparing with the corresponding linear case.
This correction depends only on the cubic nonlinearity and hence \eqref{5mKdV} with $\alpha=0$ behaves like the linear solutions in this region.
The function $W$ does not belong to $L^2(\R)$ in general because solutions to \eqref{5mKdV} do not lead to conservation of the $L^2$-norm.
We have only an a priori bound $\| u(t) \|_{H^2} \lesssim \eps \lr{t}^{C \eps}$, which is enough to obtain a global solution.

If $\alpha=0$, then \eqref{5mKdV} is written as
\begin{equation} \label{5mKdV5}
\dt u - \frac{1}{5} \dx^5 u  = \beta \dx (u^5),
\end{equation}
which has a Hamiltonian structure.
In particular,
\[
\| u(t) \|_{L^2}, \quad
\int_{\R} \left( \frac{1}{2} |\dx^2 u(t,x)|^2 + \frac{5}{6} \beta |u(t,x)|^6 \right) dx
\]
are conserved, i.e., independent of $t$ as long as $u$ is a solution to \eqref{5mKdV5}.
The second quantity is the energy, hence the energy space is $H^2(\R)$.
The $L^2$-conservation law yields that $\| W \|_{L^2} \lesssim \eps$ provided that $\alpha=0$.
Moreover, we can replace $u_0 \in H^2(\R)$ with $u_0 \in H^{\frac{2}{5}}(\R)$.

\begin{thm} \label{thm2}
Assume that the initial datum $u_0$ at time $0$ satisfies
\[
\| u_0 \|_{H^{\frac{2}{5},1}} \le \eps \ll 1.
\]
Then, there exists a unique global solution $u$ to \eqref{5mKdV5} with $e^{-\frac{1}{5} t\dx^5} u \in C(\R; H^{\frac{2}{5},1}(\R))$ which satisfies the estimates \eqref{est:u_infty}.
Moreover, we have the same asymptotic behavior as in Theorem \ref{thm} with $\alpha=0$.
In addition, the function $W$ also satisfies
\[
\| W \|_{L^{\infty} \cap L^2} \lesssim \eps.
\]
\end{thm}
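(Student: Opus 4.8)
The plan is to rerun the proof of Theorem~\ref{thm} in the special case $\alpha=0$, where the nonlinearity $\beta\dx(u^5)$ is two derivatives milder than the one in \eqref{5mKdV}, and to sharpen the conclusion on the profile $W$ using conservation of the $L^2$-norm. As a preliminary I would record local well-posedness of \eqref{5mKdV5} in $H^{\frac{2}{5},1}(\R)$: the Sobolev index $\frac{2}{5}$ lies far above the scaling-critical exponent $-\frac{1}{2}$, and the only derivative in the nonlinearity sits outside a quintic monomial, so the contraction argument of Proposition~\ref{prop:WP} --- which exploits the smoothing of $e^{\frac{1}{5}t\dx^5}$ to absorb that derivative --- carries over at this lower regularity, while the weight $xu_0\in L^2$ propagates as in \cite{KPV94}. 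This produces a maximal solution with $e^{-\frac{1}{5}t\dx^5}u\in C((-T_*,T_*);H^{\frac{2}{5},1})$, so only the a priori bounds forcing $T_*=\infty$ remain.

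The core is the bootstrap. On $[1,T]$ one assumes \eqref{est:u_infty} together with $\|u(t)\|_{H^{2/5}}\lesssim\eps\lr{t}^{C\eps}$ and finiteness of the weighted norm $\|(x+t\dx^4)u(t)\|_{L^2}$, and then recovers \eqref{est:u_infty} with a strictly better constant through the testing-by-wave-packets scheme of Theorem~\ref{thm}. The content of Remark~\ref{rmk:reqH2} is that the obstruction to going below $H^2$ in Theorem~\ref{thm} is precisely the pair of terms $2u^2\dx^2u+3u(\dx u)^2$ in the energy estimate, which disappear when $\alpha=0$. For \eqref{5mKdV5} the $\frac{1}{5}\dx^5$ term contributes nothing to $\frac{d}{dt}\|u(t)\|_{H^s}^2$ by skew-adjointness, and after moving one derivative off the nonlinearity the leading contribution is a multiple of $\int u^3\,\dx u\,(\dx^s u)^2\,dx$, which is controlled by $\|u^3\dx u\|_{L^\infty}\|u(t)\|_{H^s}^2\lesssim\eps^4 t^{-1}\|u(t)\|_{H^s}^2$ via \eqref{est:u_infty}; the remaining terms are no worse, so Gr\"onwall gives $\|u(t)\|_{H^s}\lesssim\eps\lr{t}^{C\eps}$ for every $s\ge0$. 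Commuting the vector field $x+t\dx^4$ (which commutes with $\dt-\frac{1}{5}\dx^5$) through the equation preserves this cancellation structure, so the weighted norm propagates as well, possibly with polynomial growth, which is all that global existence requires. Feeding these inputs and the bootstrap hypothesis into the wave-packet analysis of Theorem~\ref{thm} then reproduces \eqref{est:u_infty}, closes the bootstrap, and delivers all the asserted asymptotics, with the logarithmic phase correction vanishing because it is proportional to $\alpha$ and \eqref{eq:self} reducing to $Q''''+yQ+5\beta Q^5=0$.

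For the improved bound on $W$: equation \eqref{5mKdV5} conserves $\|u(t)\|_{L^2}$, because $\dx^5$ is skew-adjoint so $\int u\,\dx^5 u\,dx=0$, and $\int u\,\dx(u^5)\,dx=-\frac{1}{6}\int\dx(u^6)\,dx=0$; hence $\|u(t)\|_{L^2}=\|u_0\|_{L^2}\le\eps$ for all $t$. When $\alpha=0$ the asymptotic identity in $\Xf$ reads $\wh u(t,\xi)=W(\xi)e^{\frac{1}{5}it\xi^5}+\err_\xi$ with $\err_\xi\to0$, and since $\Xf$ exhausts $\R\setminus\{0\}$ as $t\to\infty$ we obtain $e^{-\frac{1}{5}it\xi^5}\wh u(t,\xi)\to W(\xi)$ for a.e.\ $\xi$. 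By Plancherel $\|e^{-\frac{1}{5}it\xi^5}\wh u(t)\|_{L^2}=\|u(t)\|_{L^2}\le\eps$ uniformly in $t$, so Fatou's lemma yields $\|W\|_{L^2}\le\eps$, which together with the bound $\|W\|_{L^\infty}\lesssim\eps$ from Theorem~\ref{thm} is the claimed $\|W\|_{L^\infty\cap L^2}\lesssim\eps$.

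The real work lies in the second step: one must audit every error term in the testing-by-wave-packets argument of Theorem~\ref{thm} and verify that, once the $\alpha$-terms are removed, $H^{2/5}$-control of $u$ (in place of $H^2$) suffices at each stage --- in the energy and weighted-energy estimates and in the remainder bounds in the decaying, self-similar, and oscillatory regions --- so that the whole scheme re-closes at the lower regularity. The local theory and the $L^2$-conservation step are routine by comparison.
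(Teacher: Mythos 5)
Your overall strategy coincides with the paper's: rerun the scheme of Theorem \ref{thm} with the $\alpha$-terms removed, exploit the absence of the cubic, derivative-heavy nonlinearity to lower the regularity, and upgrade the bound on $W$ via $L^2$-conservation. Your Fatou/Plancherel argument for $\|W\|_{L^2}\le\eps$ (pointwise convergence of $e^{-\frac{1}{5}it\xi^5}\wh{u}(t,\xi)$ on $\Xf$, which exhausts $\R\setminus\{0\}$, plus conservation of $\|u(t)\|_{L^2}$) is a correct and clean way to make precise what the paper only asserts in one line. The local theory is the paper's Corollary \ref{cor:WP}, and the asymptotics do follow once the a priori bounds are in place, since Sections \ref{S:KS_type}--\ref{S:proof} use only $\|u\|_{L^2}$, $\|u\|_{\wt{X}}$, and the bootstrap hypothesis.

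Two points need tightening. First, at fractional regularity $s=\tfrac{2}{5}$ you cannot literally ``move one derivative off the nonlinearity''; the paper's Lemma \ref{lem:energy2} uses the Kato--Ponce commutator estimate (Lemma \ref{lem:KP}) to replace $\int|\dx|^s u\cdot|\dx|^s(u^4\dx u)\,dx$ by $-10\beta\int u^3\dx u\,(|\dx|^su)^2dx$ up to an error controlled by $\|u\|_{\dot H^s}^2\|u\|_{L^\infty}^3\|\dx u\|_{L^\infty}$; that commutator step is the actual content of the fractional energy estimate. Second, and more substantively, your proposal never identifies where the exponent $\tfrac{2}{5}$ enters, so the stated threshold is left unexplained. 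It is not the local theory (which works for all $s\ge0$) nor the energy estimate (which works for all $0<s<1$): it is the initialization of the auxiliary norm at $t=1$, where one writes $\|\J u\|_{L^2}\lesssim\|\Lambda u\|_{L^2}+5t\|u^5\|_{L^2}$ and needs $\|u^5\|_{L^2}=\|u\|_{L^{10}}^5\lesssim\|u\|_{H^{2/5}}^5$ via the Sobolev embedding $H^{\frac{2}{5}}(\R)\hookrightarrow L^{10}(\R)$; this is exactly Remark \ref{rmk:reqH2} (for $t\ge1$ the pointwise bootstrap bounds take over). Relatedly, the claim that the weighted norm may propagate ``possibly with polynomial growth, which is all that global existence requires'' is too weak: Lemma \ref{lem:Xtilde} and the $t^{-\frac{1}{10}}$ gains in Propositions \ref{prop:est|u|} and \ref{prop:approx} require $\|\Lambda u(t)\|_{L^2}\lesssim\eps t^{C\eps}$, not merely polynomial growth. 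Fortunately the Gr\"onwall argument with coefficient $\eps t^{-1}$ delivers exactly the $t^{C\eps}$ bound, so the scheme closes, but as written your justification would not.
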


We give here an outline of the proof.
Denote by $\Lc$ the linear operator of \eqref{5mKdV}:
\[
\Lc := \dt - \frac{1}{5} \dx^5.
\]
To obtain pointwise estimates for solutions, we use the vector field
\[
\J := x+t \dx^4,
\]
which satisfies $\J = e^{\frac{1}{5} t \dx^5} x e^{-\frac{1}{5} t \dx^5}$.

Equation \eqref{5mKdV} is invariant under the scaling transformation
\begin{equation*}
u (t,x) \mapsto \lambda u(\lambda^5 t, \lambda x)
\end{equation*}
for any $\lambda >0$.
The generator of the scaling transformation is given by
\[
\Sc := 5 t \partial_t + x \dx + 1 ,
\]
which is related to $\Lc$ and $\J$ as follows:
\[
\Sc = 5 t \Lc + \J \dx + 1.
\]
As in \cite{HarG16, HayNau16}, we also use the operator
\begin{equation}  \label{eq:deflambda}
\Lambda := \dx^{-1} \Sc = 5t \dx^{-1} \Lc + \J.
\end{equation}

We introduce the norm with respect to the spatial variable
\[
\| u (t) \|_{X^s} := \| u (t) \|_{H^s} + \| \Lambda u (t) \|_{L^2}
\]
for $s \in \R$.
We note that
\[
\| u_0 \|_{X^s} \sim \| u_0 \|_{H^{s,1}}.
\]
In \S \ref{wp}, by using the Fourier restriction norm method, we show local-in-time well-posedness of \eqref{5mKdV} in $C(\R;X^s)$.
We need to estimate nonlinear parts including $\Lambda u$.
To estimate the nonlinear parts unifiedly, we show regularity conditions whereby tri- and quinti-linear estimates hold in the Fourier restriction norm spaces.
This is an extension of Kwon's result \cite{Kwo08}.

The local well-posedness implies that for $\eps>0$ sufficiently small, we can find $T>1$ and a unique solution $u \in C([0,T]; X^s)$ to \eqref{5mKdV}.
We then make the bootstrap assumption that $u$ satisfies the linear pointwise estimates:
there exists a constant $D$ with $1<D \le \eps^{-\frac{1}{2}}$ such that
\begin{equation} \label{est:u_infty0}
\| \lr{t^{-\frac{1}{5}}x}^{-\frac{k}{4}+\frac{3}{8}} \dx^k u(t) \|_{L^{\infty}} \le D \eps t^{-\frac{k+1}{5}}
\end{equation}
for $t \in [1,T]$ and $k=0,1,2,3$.

In \S \ref{S:energy}, under this assumption, for $\eps>0$ sufficiently small, we have the energy estimate
\begin{equation*}
\sup_{0 \le t \le T} \| u(t) \|_{X^s} \le C_1 \eps \lr{T}^{C_2 \eps},
\end{equation*}
where $C_1$ and $C_2$ are constants independent of $D$ and $T$.
To complete the proof of global existence, we need to close the bootstrap estimate \eqref{est:u_infty0}.

Because the nonlinearity of \eqref{5mKdV} contains three derivatives (e.g., $u^2 \dx^3 u$), an unfavorable term appears in the standard energy argument.
To obtain the energy estimate for $\| u(t) \|_{H^s}$, we need to add some correction term (see \cite{Kwo09} for the fifth-order KdV equation, and see also \cite{KenPil16}).
We point out that this argument can also be applied to more general nonlinearity, namely \eqref{5mKdVg}.
On the other hand, we rely on the special nonlinearity of \eqref{5mKdV} to obtain the energy estimate for $\Lambda u$.
More precisely, the nonlinearity of \eqref{5mKdV} ensures that $\int_{\R} u \dx u (\dx \Lambda u)^2 dx$ does not appear in $\dt \| \Lambda u (t) \|_{L^2}^2$.

In \S \ref{S:KS_type}, we prove a priori bounds that give the pointwise and $L^2$ decay estimates.
In particular, the estimate \eqref{est:u_infty} in the decaying region $\Xp$ holds true.
We also observe that \eqref{est:u_infty0} holds true at $t=1$.

To obtain \eqref{est:u_infty} in the self-similar region $\Xz$ or the oscillatory region $\Xn$, we use the method of testing by wave packets as in \cite{HarG16, HIT, IfrTat15, O}.
In \S \ref{S:wave_packet}, we observe that the wave packet is a good approximate solution to the linear fifth-order mKdV equation.
We also show that the output $\gamma$ of testing solutions $u$ to \eqref{5mKdV} with the wave packet is a reasonable approximation of $u$.
We then reduce closing the bootstrap estimate \eqref{est:u_infty0} to proving global bounds for $\gamma$.

In \S \ref{S:proof}, combining the estimates proved in previous sections, we show that $\gamma$ satisfies an ordinary differential equation.
Solving this ordinary differential equation shows the global bounds for $\gamma$, which concludes the proof, and the logarithmic correction to the phase arises.

At this point of this section, we summarize the notation used throughout this paper.
We set $\N_0 := \N \cup \{ 0 \}$.
We denote the set of positive and negative real numbers by $\R_+$ and $\R_-$, respectively.
We denote the space of all smooth and compactly supported functions on $\R$ by $C_0^{\infty} (\R)$.
We denote the space of all rapidly decaying functions on $\R$ by $\mathcal{S}(\R)$.
We define the Fourier transform of $f$ by $\mathcal{F}[f]$ or $\widehat{f}$.
We denote the inhomogeneous Sobolev spaces by $H^s(\R)$ equipped with the norm $\| f \|_{H^s}:= \| \lr{\cdot}^s \widehat{f} \| _{L^2}$, where $\lr{\xi} := (1+ | \xi |^2)^{\frac{1}{2}}$.
We also denote the homogeneous Sobolev spaces by $\dot{H}^s(\R)$.
We define the weighted Sobolev norms by $\| f \|_{H^{s,m}} := \| f \|_{H^s} + \| |x|^m f \|_{L^2}$.

In estimates, we use $C$ to denote a positive constant that can change from line to line.
If $C$ is absolute or depends only on parameters that are considered fixed, then we often write $X \lesssim Y$, which means $X \le CY$.
When an implicit constant depends on a parameter $a$, we sometimes write $X \lesssim_{a} Y$.
We define $X \ll Y$ to mean $X \le C^{-1} Y$ and $X \sim Y$ to mean $C^{-1} Y \le X \le C Y$.
We write $X = Y + O(Z)$ when $|X-Y| \lesssim Z$.

Let $\delta>0$ be a small constant, which is needed only to demonstrate Proposition \ref{prop:gamma_decay}.
For concreteness, we take $\delta = \frac{1}{100}$.
Let $\sigma \in C_0^{\infty}(\R )$ be an even function with $0 \le \sigma \le 1$ and $\sigma (\xi ) = \begin{cases} 1, & \text{if } |\xi | \le 1, \\ 0, & \text{if } |\xi| \ge 2^{\delta} .\end{cases}$
For any $R, \, R_1, R_2 >0$ with $R_1 < R_2$, we set 
\begin{gather*}
\sigma _R (\xi) := \sigma \Big( \frac{\xi}{R} \Big) -\sigma \Big( \frac{2^{\delta} \xi}{R} \Big) , \quad \sigma _{\le R} (\xi) := \sigma \Big( \frac{\xi}{R} \Big) , \quad \sigma _{>R} (\xi) := 1- \sigma _{\le R} (\xi) ,\\
\sigma _{<R}(\xi) := \sigma_{\le R}(\xi) - \sigma _{R} (\xi) , \quad
\sigma _{\ge R}(\xi) := \sigma_{>R}(\xi) + \sigma _{R} (\xi), \\
\sigma_{R_1 \le \cdot \le R_2} (\xi) := \sigma _{\le R_2}(\xi) - \sigma_{<R_1}(\xi), \quad
\sigma_{R_1 < \cdot < R_2} (\xi) := \sigma _{<R_2}(\xi) - \sigma_{\ge R_1}(\xi).
\end{gather*}
For any $N, \, N_1, N_2 \in 2^{\delta \mathbb{Z}}$ with $N_1 < N_2$, we define
\[
P_N f := \mathcal{F}^{-1} [\sigma_{N} \widehat{f}] , \quad
P_{N_1 \le \cdot \le N_2} f := \mathcal{F}^{-1}[\sigma_{N_1 \le \cdot \le N_2} \wh{f}] .
\]
We denote the characteristic function of an interval $I$ by $\bm{1}_{I}$.
For $N \in 2^{\delta \mathbb{Z}}$, we define the Fourier multipliers with the symbols $\bm{1}_{\R_{\pm}}(\xi)$ and $\sigma_N(\xi) \bm{1}_{\R_{\pm}} (\xi )$ by $P^{\pm}$ and $P_N^{\pm}$, respectively.

\section{Local well-posedness} \label{wp}

For $s,b \in \R$, we define the space $Y^{s,b}$ as the closure of $\mathcal{S} (\R \times \R)$ under the norm
\[
\| u \|_{Y^{s,b}} := \bigg\| \lr{\xi}^s \Lr{\tau-\frac{\xi^5}{5}}^b \mathcal{F}_{t,x} [u] \bigg\|_{L^2_{\tau, \xi}}.
\]
We note that $Y^{s,b} \hookrightarrow C(\R; H^s(\R)$ if $b>\frac{1}{2}$.
For our analysis, we need to introduce the local-in-time version of the space defined above.
When $b>\frac{1}{2}$, we define the corresponding restriction space $Y^{s,b}_T$ to a given time interval $[0,T]$ for $T>0$ as
\begin{equation*}
Y^{s,b}_T := \left\{ u \in L^{\infty}([0,T]; H^s(\R)) \colon \text{ there exists } v \in Y^{s,b} \text{ such that } \ v|_{[0,T]} = u \right\}.
\end{equation*}
We endow $Y^{s,b}_T$ with the norm 
\[
\| u \|_{Y^{s,b}_T} :=\inf \left\{ \| v \|_{Y^{s,b}} \colon \ v|_{[0,T]} = u \right\},
\]
where the infimum is taken over all possible extensions  $v$ of $u$ onto the real line.

Set
\[
\| u \|_{Z^{s,b}_T} := \| u \|_{Y^{s,b}_T} + \| \Lambda u \|_{Y^{0,b}_T}.
\]
In this section, we prove the following well-posedness result.

\begin{prop} \label{prop:WP}
Let $s \ge \frac{3}{4}$ and let $\vr>0$ be sufficiently small.
If $u_0 \in H^{s,1} (\R)$, then there exists $T=T( \| u_0 \|_{H^{\frac{3}{4},1}}) >0$ and a unique solution $u \in Z^{s,\frac{1}{2}+\vr}_T$ to \eqref{5mKdV} satisfying
\[
\sup _{0 \le t \le T} \| u(t) \|_{X^{s}} \lesssim \| u_0 \|_{H^{s,1}}.
\]
Moreover, the flow map $u_0 \in H^{s,1} (\R) \mapsto u \in Z^{s,\frac{1}{2}+\vr}_T$ is locally Lipschitz continuous.
\end{prop}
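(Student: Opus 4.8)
The plan is to solve \eqref{5mKdV} by a contraction-mapping argument in $Z^{s,\frac12+\vr}_T$, handling $u$ and $w:=\Lambda u$ as a coupled pair. The first task is to find the equation obeyed by $\Lambda u$. Write $M(u):=\alpha\big(2u^2\dx^2 u+3u(\dx u)^2\big)+\beta u^5$, so that \eqref{5mKdV} reads $\Lc u=\dx M(u)$. Using $[\Sc,\dx]=-\dx$ together with the Leibniz-type identity
\[
\Sc\big(\dx^{a_1}u\cdots\dx^{a_n}u\big)=D\big(\dx^{a_1}u\cdots\dx^{a_n}u\big)[\Sc u]-\Big(\textstyle\sum_i a_i+n-1\Big)\dx^{a_1}u\cdots\dx^{a_n}u ,
\]
in which every monomial of $M$ has $\textstyle\sum_i a_i+n-1=4$, one finds $\Sc M(u)=DM(u)[\Sc u]-4M(u)$; combining this with $[\Sc,\Lc]=-5\Lc$ and $\Sc u=\dx\Lambda u$ yields, after dividing out one factor of $\dx$ (the zero-frequency ambiguity being harmless since all terms lie in $L^2_x$),
\[
\Lc w=DM(u)[\dx w]=\alpha\big(4u\dx^2u\,\dx w+2u^2\dx^3 w+3(\dx u)^2\dx w+6u\dx u\,\dx^2 w\big)+5\beta u^4\dx w ,
\]
which is \emph{linear} in $w$, of the same order as \eqref{5mKdV}, with Cauchy datum $w(0)=\J u_0=xu_0$. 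I would then pass to the Duhamel formulation of the pair $(u,w)$ with the usual smooth time cutoffs $\psi,\psi_T$ and seek a fixed point in a ball $\{\|u\|_{Y^{s,\frac12+\vr}_T}+\|w\|_{Y^{0,\frac12+\vr}_T}\le R\}$ with $R\sim\|u_0\|_{H^{s,1}}$.

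Next I would record the standard $Y^{s,b}$ linear estimates: $\|\psi(t)e^{\frac15t\dx^5}f\|_{Y^{s,\frac12+\vr}}\lesssim\|f\|_{H^s}$ and the Duhamel bound
\[
\Big\|\psi_T(t)\int_0^t e^{\frac15(t-t')\dx^5}F(t')\,dt'\Big\|_{Y^{s,\frac12+\vr}}\lesssim T^{\theta}\|F\|_{Y^{s,-\frac12+2\vr}}
\]
for some $\theta=\theta(\vr)>0$; these reduce the problem to multilinear estimates with a small power of $T$ to spare.

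The core of the argument is the trilinear and quintilinear estimates. For the $u$-equation one needs
\[
\|\dx M(u)\|_{Y^{s,-\frac12+2\vr}}\lesssim\|u\|_{Y^{s,\frac12+\vr}}^3+\|u\|_{Y^{s,\frac12+\vr}}^5,\qquad s\ge\tfrac34 ,
\]
which is, for the cubic part, Kwon's estimate \cite{Kwo08}, the quintic part being easier and valid down to smaller $s$; for the $w$-equation one needs the companion estimate
\[
\|DM(u)[\dx w]\|_{Y^{0,-\frac12+2\vr}}\lesssim\big(\|u\|_{Y^{s,\frac12+\vr}}^2+\|u\|_{Y^{s,\frac12+\vr}}^4\big)\|w\|_{Y^{0,\frac12+\vr}},\qquad s\ge\tfrac34 ,
\]
in which one factor, $w$, sits only at the base regularity $H^0$ yet carries up to three derivatives; this is the ``extension of Kwon's result'' referred to in the introduction. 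Both rest on the strong smoothing of the fifth-order flow: writing $\lambda_i=\tau_i-\tfrac15\xi_i^5$ for the modulations of the inputs and $\lambda=\tau-\tfrac15\xi^5$ for that of the output, one has $\lambda-\sum_i\lambda_i=\tfrac15\big(\xi^5-\sum_i\xi_i^5\big)$, and on most of frequency space this resonance function is large enough to absorb three derivatives; the residual near-resonant regions are treated with $L^4$ and $L^6$ smoothing/Strichartz and bilinear estimates for $e^{\frac15t\dx^5}$, together with the matching difference (multilinear) estimates needed to run the contraction. Establishing these estimates — in particular checking that everything still closes exactly at $s=\tfrac34$ for the $u$-factors and at $H^0$ for the $w$-factor, so that the extra $\tfrac34$ of regularity lost by placing $w$ in $H^0$ rather than $H^s$ is recovered from the $u$-factors and the dispersive gain — is the main obstacle.

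With the linear and multilinear estimates in place, for $T=T(\|u_0\|_{H^{\frac34,1}})$ small the Duhamel map is a contraction on the ball above, yielding a unique $(u,w)\in Z^{s,\frac12+\vr}_T$, and $u_0\mapsto(u,w)$ is locally Lipschitz (the difference estimates give this directly). To identify $w=\Lambda u$ I would first take $u_0$ in the Schwartz class $\mathcal{S}(\R)$, for which $\Lambda u$ is a genuine function that, by the computation above, solves the $w$-equation with datum $xu_0$, hence $w=\Lambda u$ by uniqueness, and then extend to general $u_0\in H^{s,1}(\R)$ by density and continuous dependence. Finally, since $\|u(t)\|_{X^s}=\|u(t)\|_{H^s}+\|\Lambda u(t)\|_{L^2}=\|u(t)\|_{H^s}+\|w(t)\|_{L^2}$ and $Y^{s,\frac12+\vr}_T\hookrightarrow C([0,T];H^s)$, $Y^{0,\frac12+\vr}_T\hookrightarrow C([0,T];L^2)$, the bound $\sup_{0\le t\le T}\|u(t)\|_{X^s}\lesssim\|u_0\|_{H^{s,1}}$ and the asserted local Lipschitz continuity of the flow follow.
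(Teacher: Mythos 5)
Your proposal follows essentially the same route as the paper: a contraction argument for the pair $(u,\Lambda u)$ in $Z^{s,\frac12+\vr}_T$, the identical derived equation $\Lc\Lambda u = DM(u)[\dx\Lambda u]$ obtained from $[\Lc,\Sc]=5\Lc$ and $[\Sc,\dx]=-\dx$ (the paper's \eqref{eq:lambda}), and the reduction to tri- and quinti-linear estimates in the Bourgain spaces with a small power of $T$ to spare. The multilinear estimates you flag as the main obstacle are exactly the paper's Propositions \ref{prop:trilinear} and \ref{prop:quintilinear}, which are proved separately beforehand and merely invoked in the proof of Proposition \ref{prop:WP}; your regularity bookkeeping (the $\Lambda u$ factor at $L^2$ with three derivatives compensated by the two $u$ factors at $H^{3/4}$, closing exactly at $s=\frac34$) matches the hypotheses of those propositions.
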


\begin{cor} \label{cor:WP}
Let $s \ge 0$  and let $\vr>0$ be sufficiently small.
If $u_0 \in H^{s,1} (\R)$, then there exists $T=T( \| u_0 \|_{H^{0,1}})>0$ and a unique solution $u \in Z^{s,\frac{1}{2}+\vr}_T$ to \eqref{5mKdV5} satisfying
\[
\sup _{0 \le t \le T} \| u(t) \|_{X^{s}} \lesssim \| u_0 \|_{H^{s,1}}.
\]
Moreover, the flow map $u_0 \in H^{s,1} (\R) \mapsto u \in Z^{s,\frac{1}{2}+\vr}_T$ is locally Lipschitz continuous.
\end{cor}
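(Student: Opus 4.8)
The plan is to follow the proof of Proposition~\ref{prop:WP} almost verbatim, exploiting the fact that for \eqref{5mKdV5} the trilinear part of the nonlinearity — the $\alpha$-terms, which carry three derivatives and force $s\ge\frac34$ — is absent, so only the quintilinear estimates of \S\ref{wp} are needed, and these close at the much lower regularity $s\ge 0$. First I would recast the problem as a coupled integral system for $u$ and $v:=\Lambda u$. Using the commutator identities $[\Sc,\dx]=-\dx$ and $[\Sc,\Lc]=-5\Lc$ (immediate from $\Sc=5t\dt+x\dx+1$, $\Lc=\dt-\frac15\dx^5$), together with $\Sc(u^5)=5u^4\Sc u-4u^5$ and $\Sc u=\dx\Lambda u$, one computes
\[
\Lambda\Lc u=\Lc\Lambda u-5\dx^{-1}\Lc u,\qquad \Lambda\dx(u^5)=5u^4\dx\Lambda u-5u^5,
\]
so that from $\Lc u=\beta\dx(u^5)$ we get $\Lc v=5\beta u^4\dx v$ with $v(0)=xu_0$. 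Thus I seek a fixed point of the map
\[
(u,v)\mapsto\Big(e^{\frac15 t\dx^5}u_0+\beta\!\int_0^t\! e^{\frac15(t-t')\dx^5}\dx(u^5)\,dt',\ e^{\frac15 t\dx^5}(xu_0)+5\beta\!\int_0^t\! e^{\frac15(t-t')\dx^5}(u^4\dx v)\,dt'\Big),
\]
which, since $\|u\|_{Z^{0,b}_T}=\|u\|_{Y^{0,b}_T}+\|\Lambda u\|_{Y^{0,b}_T}$ and $\|u_0\|_{X^0}\sim\|u_0\|_{H^{0,1}}$, amounts to a contraction in a small ball of $Z^{0,\frac12+\vr}_T$ of radius $\sim\|u_0\|_{H^{0,1}}$.

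The only multilinear inputs are the quintilinear estimates
\[
\|\dx(u_1\cdots u_5)\|_{Y^{0,-\frac12+\vr}}\lesssim\prod_{j=1}^5\|u_j\|_{Y^{0,\frac12+\vr}},\qquad \|u_1u_2u_3u_4\,\dx u_5\|_{Y^{0,-\frac12+\vr}}\lesssim\prod_{j=1}^5\|u_j\|_{Y^{0,\frac12+\vr}},
\]
both of which are special cases of — or immediate consequences of — the machinery of \S\ref{wp}: the quintic term carries a single derivative and $L^2$ is subcritical for \eqref{5mKdV5} (the scaling-critical space being $\dot H^{-1/2}$), so the smoothing encoded in the weights $\lr{\tau-\frac{\xi^5}{5}}^{\pm b}$ leaves ample room. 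The only point that is not literally contained in Proposition~\ref{prop:WP} is that in the $v$-equation the derivative falls on $v$, which is controlled only at the $L^2$ level (not $H^s$), but a single derivative against fifth-order dispersion is harmless and the second estimate above absorbs it. Combining these with the standard time-localization estimate (producing a factor $T^{\theta}$, $\theta>0$, from $\vr>0$) and the usual high-frequency/high-modulation bookkeeping, a Picard iteration yields a unique solution $(u,v)$ for any $T$ with $T^{\theta}\|u_0\|_{H^{0,1}}^4\ll1$; in particular $T$ depends only on $\|u_0\|_{H^{0,1}}$. Uniqueness in $Z^{0,\frac12+\vr}_T$ and the local Lipschitz continuity of the flow map follow by applying the same quintilinear estimates to the equations satisfied by $u-\wt u$ and $\Lambda u-\Lambda\wt u$.

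Finally, to pass from $s=0$ to general $s\ge0$ I would use persistence of regularity: placing the $\lr{\xi}^s$-weight on the highest-frequency factor (Littlewood–Paley decomposition plus a Leibniz bound) gives
\[
\|u\|_{Z^{s,\frac12+\vr}_T}\lesssim\|u_0\|_{H^{s,1}}+CT^{\theta}\|u\|_{Z^{0,\frac12+\vr}_T}^4\|u\|_{Z^{s,\frac12+\vr}_T},
\]
and since $T^{\theta}\|u\|_{Z^{0,\frac12+\vr}_T}^4\lesssim T^{\theta}\|u_0\|_{H^{0,1}}^4\ll1$, this closes on the same interval to $\|u\|_{Z^{s,\frac12+\vr}_T}\lesssim\|u_0\|_{H^{s,1}}$, whence $\sup_{0\le t\le T}\|u(t)\|_{X^s}\lesssim\|u_0\|_{H^{s,1}}$ by $Y^{s,\frac12+\vr}\hookrightarrow C(\R;H^s)$ and $\|u_0\|_{X^s}\sim\|u_0\|_{H^{s,1}}$. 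I do not expect any genuinely new obstacle here: the corollary is in essence the observation that, with $\alpha=0$, the subcritical quintic estimates already established in \S\ref{wp} suffice and the lifespan is governed by the $H^{0,1}$-norm alone; the only mildly delicate bookkeeping is checking that the derivative on $\Lambda u$ in the $v$-equation causes no loss, which it does not because of the smoothing inherent in the fifth-order equation.
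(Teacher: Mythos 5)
Your proposal is correct and follows essentially the same route as the paper, whose entire proof of Corollary \ref{cor:WP} is the remark that it ``follows from the same manner'' as Proposition \ref{prop:WP}: with $\alpha=0$ only the quintilinear bounds of Proposition \ref{prop:quintilinear} are needed (their hypotheses hold with all $s_j=0$ and one factor at $-1$ for the derivative), the equation $\Lc\Lambda u=5\beta u^4\dx\Lambda u$ is exactly \eqref{eq:lambda} with $\alpha=0$, and the contraction closes in $Z^{s,\frac12+\vr}_T$ with $T$ determined by $\|u_0\|_{H^{0,1}}$.
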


Using Proposition \ref{prop:WP} repeatedly, for sufficiently small $\| u_0 \|_{X^s}$, we can find an existence time $T>1$ and a unique solution $u \in C([0,T]; X^s)$ to \eqref{5mKdV}.

We list some useful properties of $Y^{s,b}$.
The following linear estimates are well-known (see for example \cite{Tao})

\begin{lem} \label{lem:Ylin}
Let $u$ satisfy
\[
\Lc u = F, \quad u(0,x) = u_0.
\]
For any $s \in \R$, $b>\frac{1}{2}$, and $0<T<1$, we have
\[
\| u \|_{Y^{s,b}_T} \lesssim \| u_0 \|_{H^s} + \| \sigma_{\le T}(t) F \|_{Y^{s,b-1}}.
\]
\end{lem}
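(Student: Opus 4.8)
The plan is to reduce the restricted-norm bound to two global space-time estimates — one for the free evolution, one for the Duhamel term — both of which are classical (see \cite{Tao}). Fix once and for all a cutoff $\psi \in C_0^{\infty}(\R)$ with $\psi \equiv 1$ on $[-1,1]$ and $\supp \psi \subset [-2,2]$. Since $0<T<1$, for every $t \in [0,T]$ one has $\psi(t)=1$ and $\sigma_{\le T}(t')=1$ for all $t' \in [0,t]$; hence, by Duhamel's formula,
\[
v(t) := \psi(t)\, e^{\frac{t}{5}\dx^5} u_0 + \psi(t) \int_0^t e^{\frac{t-t'}{5}\dx^5}\big(\sigma_{\le T}(t') F(t')\big)\, dt'
\]
is a global-in-time function with $v|_{[0,T]}=u$, so $\|u\|_{Y^{s,b}_T} \le \|v\|_{Y^{s,b}}$. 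It therefore suffices to prove, with constants depending only on $\psi$ and $b$, the global bounds
\[
\|\psi(t)\, e^{\frac{t}{5}\dx^5} u_0\|_{Y^{s,b}} \lesssim \|u_0\|_{H^s}, \qquad \Big\|\psi(t)\int_0^t e^{\frac{t-t'}{5}\dx^5} G\, dt'\Big\|_{Y^{s,b}} \lesssim \|G\|_{Y^{s,b-1}},
\]
and then to apply the second with $G = \sigma_{\le T}(t) F$. Since $\lr{D_x}^s$ commutes with $\dx^5$ and with multiplication by functions of $t$, I may take $s=0$ throughout.

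For the first bound I would compute the space-time Fourier transform directly: $\mathcal{F}_{t,x}[\psi(t) e^{\frac{t}{5}\dx^5} u_0](\tau,\xi) = c\,\wh\psi(\tau - \tfrac{\xi^5}{5})\,\wh{u_0}(\xi)$, so the $Y^{0,b}$-norm squared factors as $\big(\int_{\R}\lr{\rho}^{2b}|\wh\psi(\rho)|^2\, d\rho\big)\|u_0\|_{L^2}^2$, and the first factor is finite because $\wh\psi \in \mathcal{S}(\R)$. For the Duhamel bound I would conjugate out the free flow, reducing — for each fixed $\xi$ — to the purely temporal inequality $\big\|\psi(t)\int_0^t g(t')\, dt'\big\|_{H^b_t} \lesssim \|g\|_{H^{b-1}_t}$ applied to $g(t) := e^{\frac{it\xi^5}{5}}\wh G(t,\xi)$, whose temporal Fourier transform is $\wh G(\tau + \tfrac{\xi^5}{5},\xi)$; squaring and integrating in $\xi$ recovers the claim.

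To prove that one-dimensional inequality I would split $g = g_{\mathrm{lo}} + g_{\mathrm{hi}}$ according to $|\tau| \le 2$ and $|\tau| > 2$ on the temporal Fourier side. On the low-frequency range $H^{b-1}_t$, $L^2_t$ and $H^b_t$ are comparable, and both "multiply by $\psi$" and $g \mapsto \int_0^t g$ map $L^2_t$ boundedly into $H^b_t(\supp\psi)$, so $g_{\mathrm{lo}}$ is harmless. For $g_{\mathrm{hi}}$ I would use the identity $\int_0^t e^{it'\tau}\, dt' = \frac{e^{it\tau}-1}{i\tau}$, which decomposes $\int_0^t g_{\mathrm{hi}}(t')\, dt'$ into $\int_{\R}\frac{e^{it\tau}}{i\tau}\wh{g_{\mathrm{hi}}}(\tau)\, d\tau$ plus the constant $-\int_{\R}\frac{\wh{g_{\mathrm{hi}}}(\tau)}{i\tau}\, d\tau$; the first term has $H^b_t$-norm $\sim \|\lr\tau^{b-1}\wh{g_{\mathrm{hi}}}\|_{L^2}$ (since $|\tau| \sim \lr\tau$ on its support) and multiplication by $\psi$ preserves $H^b_t$, while the constant is dominated by $\|\wh{g_{\mathrm{hi}}}/\tau\|_{L^1_\tau} \lesssim \|\lr\tau^{b-1}\wh{g_{\mathrm{hi}}}\|_{L^2}$ via Cauchy--Schwarz, which is finite precisely because $b>\tfrac12$ makes $\lr\tau^{1-b}/|\tau| \in L^2(\{|\tau|>2\})$. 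This is the only place the hypothesis $b>\tfrac12$ is used. I do not anticipate a genuine obstacle — this is standard Bourgain-space machinery — and the only points demanding care are verifying that the explicit extension $v$ really restricts to $u$ on $[0,T]$ (which is what forces $\psi \equiv 1$ on $[-1,1]$ together with $T<1$) and that all implicit constants are independent of $T \in (0,1)$.
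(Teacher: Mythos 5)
The paper gives no proof of this lemma at all, simply citing \cite{Tao} as well-known, and your argument is precisely the standard one found there (explicit cutoff extension, the factored space--time Fourier computation for the free evolution, and the one-dimensional temporal inequality $\|\psi\int_0^t g\|_{H^b_t}\lesssim\|g\|_{H^{b-1}_t}$ for the Duhamel term), correctly executed with constants uniform in $T$. The one step to polish is the low-frequency piece: the assertion that $g\mapsto\int_0^t g$ maps $L^2_t$ into $H^b_t$ near $\supp\psi$ is literally true only for $b\le 1$, and for general $b>\tfrac12$ you must invoke the band-limitation of $g_{\mathrm{lo}}$ (or the usual Taylor expansion of $(e^{it\tau}-1)/(i\tau)$) to control higher derivatives --- a harmless point here since the paper only uses $b=\tfrac12+\varrho<1$.
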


\begin{lem} \label{lem:Ybb'}
Let $s \in \R$ and $-\frac{1}{2}<b' \le b < \frac{1}{2}$.
Then, for any $0<T<1$, we have
\[
\| \sigma_{\le T} (t) u \|_{Y^{s,b'}_T}
\lesssim T^{b-b'} \| u \|_{Y^{s,b}}.
\]
\end{lem}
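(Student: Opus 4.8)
The plan is to recognize Lemma~\ref{lem:Ybb'} as a standard time-localization estimate for Bourgain-type spaces (see e.g.\ \cite{Tao}) and to carry out the two routine reductions that bring it down to a one-dimensional estimate in the time variable.

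First I would reduce to the unrestricted space. Since $\sigma_{\le T}(t)=\sigma(t/T)$ is $\equiv1$ on $[-T,T]\supset[0,T]$, for any extension $v\in Y^{s,b}$ of $u$ the function $\sigma_{\le T}(t)v$ (which lies in $Y^{s,b'}$, multiplication by the Schwartz function $\sigma_{\le T}(t)$ being bounded there) restricts to $(\sigma_{\le T}(t)u)|_{[0,T]}$ on $[0,T]$; taking the infimum over such $v$, it therefore suffices to prove the unrestricted bound $\|\sigma_{\le T}(t)v\|_{Y^{s,b'}}\lesssim T^{b-b'}\|v\|_{Y^{s,b}}$ for $v\in Y^{s,b}$. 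Next I would strip the spatial variable. Conjugating by the free propagator, set $g_\xi(t):=e^{-\frac{i}{5}t\xi^5}\wh{v}(t,\xi)$ (spatial Fourier transform in $x$). Multiplication by $\sigma_{\le T}(t)$ commutes with $e^{\pm\frac{1}{5}t\dx^5}$, so a direct computation from the definition of the $Y^{s,b}$-norm gives
\[
\|v\|_{Y^{s,b}}^2=\int_{\R}\lr{\xi}^{2s}\|g_\xi\|_{H^b_t}^2\,d\xi,\qquad
\|\sigma_{\le T}(t)v\|_{Y^{s,b'}}^2=\int_{\R}\lr{\xi}^{2s}\|\sigma_{\le T}(t)g_\xi\|_{H^{b'}_t}^2\,d\xi,
\]
so everything reduces to the one-dimensional estimate: for $0<T<1$ and $-\tfrac12<b'\le b<\tfrac12$,
\[
\|\sigma_{\le T}(t)g\|_{H^{b'}_t}\lesssim T^{b-b'}\|g\|_{H^b_t}\qquad(g\in H^b(\R_t)),
\]
uniformly in the (now decoupled) parameter $\xi$.

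For this one-dimensional estimate I would split $g=g_{\le T^{-1}}+g_{>T^{-1}}$ into the parts of modulation $\le T^{-1}$ and $>T^{-1}$. On the high part, $\|g_{>T^{-1}}\|_{L^2_t}\le T^{b}\|g\|_{H^b}$ (since $\lr{\tau}^{-2b}\le T^{2b}$ there, using $b\ge0$), combined with either the uniform bound $\|\sigma_{\le T}(t)h\|_{H^{b'}}\lesssim\|h\|_{H^{b'}}$ (when $b'\ge0$) or — when $b'<0$ — the Sobolev embedding $L^{p}\hookrightarrow H^{b'}$, $\tfrac1p=\tfrac12-b'$, together with Hölder against $\sigma_{\le T}\in L^{q}$, $\tfrac1q=|b'|$, which uses $b'>-\tfrac12$. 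On the low part, the elementary estimate $\|g_{\le T^{-1}}\|_{L^\infty_t}\le\big(\int_{\lr{\tau}\le T^{-1}}\lr{\tau}^{-2b}\,d\tau\big)^{1/2}\|g\|_{H^b}\lesssim T^{b-\frac12}\|g\|_{H^b}$ — here $b<\tfrac12$ is precisely what makes $\lr{\tau}^{-2b}$ integrable up to scale $T^{-1}$ with the stated power — combined with $\|\sigma_{\le T}\|_{L^2_t}=T^{1/2}\|\sigma\|_{L^2}$ (when $b'=0$), or with $\|\sigma_{\le T}\|_{L^{q_1}_t}$, $\tfrac1{q_1}=\tfrac12-b'$, and $L^{q_1}_t\cdot L^\infty_t\subset L^p\hookrightarrow H^{b'}$ (when $b'<0$). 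Assembling these yields $\|\sigma_{\le T}g\|_{L^2}\lesssim T^b\|g\|_{H^b}$; interpolating with the diagonal bound $\|\sigma_{\le T}g\|_{H^b}\lesssim\|g\|_{H^b}$ covers $0\le b'\le b<\tfrac12$, the range $b'\le b\le0$ follows by duality (multiplication by the real function $\sigma_{\le T}$ being self-adjoint for the $L^2$ pairing), and the mixed range $b'<0<b$ by the high/low arguments just described. The diagonal bound $\|\sigma_{\le T}h\|_{H^\theta}\lesssim\|h\|_{H^\theta}$ for $|\theta|<\tfrac12$, uniformly in $T\le1$, is itself classical and I would quote it from \cite{Tao}.

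The two reductions are entirely routine; the delicate point is the one-dimensional estimate. Naive kernel (Schur) or Gagliardo-seminorm estimates are lossy by a factor $T^{-|b'|}$, so the modulation splitting at the threshold $T^{-1}$ is essential, and the hypotheses $|b|,|b'|<\tfrac12$ cannot be dropped: $b<\tfrac12$ enters through the integrability of $\lr{\tau}^{-2b}$ up to scale $T^{-1}$, and $b'>-\tfrac12$ through the admissibility of the Sobolev exponent $\tfrac1p=\tfrac12-b'$. Since this is all standard, in the paper I would present the two reductions in detail and, for the one-dimensional estimate, either record the short argument above or simply cite \cite{Tao}.
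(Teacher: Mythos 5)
Your proof is correct and takes essentially the standard route: the paper does not prove this lemma at all but cites it as well-known from \cite{Tao}, and your two reductions (first to the unrestricted bound $\|\sigma_{\le T}v\|_{Y^{s,b'}}\lesssim T^{b-b'}\|v\|_{Y^{s,b}}$, then by conjugating with the free propagator to the one-dimensional estimate $\|\sigma_{\le T}g\|_{H^{b'}_t}\lesssim T^{b-b'}\|g\|_{H^b_t}$) together with the modulation splitting at $T^{-1}$ reproduce precisely the argument of Lemma 2.11 in that reference. The only minor imprecision is in how the high-modulation estimate is chained when $0<b'\le b$, but that case is anyway covered by your interpolation with the diagonal bound, so nothing is missing.
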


Kenig et al. \cite{KPV91} showed the Strichartz estimates.

\begin{prop} \label{prop:Str}
Let $2 \le q, r \le \infty$ and $0 \le s \le \frac{3}{q}$ satisfy $-s+\frac{5}{q}+\frac{1}{r} = \frac{1}{2}$.
Then,
\[
\| |\dx|^s e^{\frac{1}{5} t \dx^5} u_0 \|_{L_t^q L_x^r}
\lesssim \| u_0 \|_{L^2},
\]
for all $u_0 \in L^2(\R)$.
\end{prop}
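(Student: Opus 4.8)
The plan is to run the classical $TT^*$/dispersive-decay scheme of Kenig--Ponce--Vega \cite{KPV91}, from which the whole proposition follows from one family of oscillatory-integral bounds, a Hardy--Littlewood--Sobolev estimate, and interpolation. For $\tau \neq 0$ let $k^{(s)}_\tau$ denote the convolution kernel of the operator $|\dx|^s e^{\frac{1}{5}\tau\dx^5}|\dx|^s$, that is, $k^{(s)}_\tau = \mathcal{F}^{-1}[\,|\xi|^{2s}e^{\frac{i}{5}\tau\xi^5}\,]$. The heart of the matter is the bound
\[
\|k^{(s)}_\tau\|_{L^\infty_x} \lesssim |\tau|^{-\frac{1+2s}{5}}, \qquad 0 \le s \le \tfrac34 .
\]
I would prove this by rescaling the frequency by $|\tau|^{-1/5}$: the substitution $\xi = |\tau|^{-1/5}\eta$ writes $k^{(s)}_\tau(x)$ as $|\tau|^{-(1+2s)/5}$ times the $\tau$-free integral $\int_\R e^{i(y\eta \pm \frac{1}{5}\eta^5)}|\eta|^{2s}\,d\eta$ with $y = x|\tau|^{-1/5}$, which one bounds uniformly in $y$ by splitting into: the region $|\eta| \lesssim 1$ (where $\int |\eta|^{2s}\,d\eta$ is finite); the non-stationary dyadic blocks $|\eta| \sim M$ with $M^4 \not\sim |y|$ (one integration by parts each, the sum over $M$ converging uniformly in $y$ for $s \le \tfrac32$); and the single stationary scale $M \sim |y|^{1/4}$, where the phase has second derivative of size $\sim |y|^{3/4}$ and van der Corput's lemma of order $2$ gives a contribution $\lesssim |y|^{s/2 - 3/8}$ --- bounded precisely when $s \le \tfrac34$. (When $s = 0$, van der Corput of order $5$ applies directly since $\partial_\eta^5(y\eta \pm \tfrac15\eta^5) = \pm 24$, recovering the familiar $\|e^{\frac{1}{5}\tau\dx^5}u_0\|_{L^\infty_x} \lesssim |\tau|^{-1/5}\|u_0\|_{L^1}$.)

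Granting this kernel bound, I would obtain the entire $r = \infty$ edge of the admissible set directly. Given $q \in [4,10]$, set $s = \tfrac5q - \tfrac12 \in [0,\tfrac34]$, so that $\tfrac{1+2s}{5} = \tfrac2q$. By the $TT^*$ principle, $\|\,|\dx|^s e^{\frac{1}{5}t\dx^5}u_0\|_{L^q_t L^\infty_x} \lesssim \|u_0\|_{L^2}$ is equivalent to $\bigl\|\int_\R k^{(s)}_{t-t'}\!\ast F(t')\,dt'\bigr\|_{L^q_t L^\infty_x} \lesssim \|F\|_{L^{q'}_t L^1_x}$; bounding the spatial convolution by $\|k^{(s)}_{t-t'}\|_{L^\infty_x}\|F(t')\|_{L^1_x} \lesssim |t-t'|^{-2/q}\|F(t')\|_{L^1_x}$ and then invoking the one-dimensional Hardy--Littlewood--Sobolev inequality in $t$ (whose exponents match precisely because of the scaling identity $-s + \tfrac5q + \tfrac1r = \tfrac12$ with $r = \infty$) closes the estimate. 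In particular this covers the two extreme triples $(q,r,s) = (10,\infty,0)$ and $(q,r,s) = (4,\infty,\tfrac34)$, and it sidesteps the failure of the $L^\infty_x$ Littlewood--Paley square-function inequality that would otherwise obstruct a dyadic summation.

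The generic admissible triple then comes by interpolation against the trivial case $(q,r,s) = (\infty,2,0)$ (Plancherel). I would apply Stein's analytic interpolation theorem to the family $z \mapsto |\dx|^z e^{\frac{1}{5}t\dx^5}$ on a strip $0 \le \Re z \le s_0$: at $\Re z = 0$ it is an $L^2_x$-isometry, hence bounded $L^2_x \to L^\infty_t L^2_x$, while at $\Re z = s_0$ it maps $L^2_x \to L^{q_0}_t L^\infty_x$ by the previous step (uniformly in $\Im z$, since $|\dx|^{iy}$ is an $L^2$-isometry). Since $[L^\infty_t L^2_x, L^{q_0}_t L^\infty_x]_\theta = L^{q_\theta}_t L^{r_\theta}_x$ with $\tfrac1{q_\theta} = \tfrac\theta{q_0}$ and $\tfrac1{r_\theta} = \tfrac{1-\theta}2$, and the interpolated derivative order is $\theta s_0$, a direct check shows that letting $(q_0,s_0)$ range over the edge ($q_0 \in [4,10]$, $s_0 = \tfrac5{q_0} - \tfrac12$) and $\theta$ over $[0,1]$ sweeps out exactly the set $\{\,-s + \tfrac5q + \tfrac1r = \tfrac12,\ 0 \le s \le \tfrac3q,\ 2 \le q,r \le \infty\,\}$ of the statement.

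The main obstacle is the kernel bound of the first paragraph: identifying $|\xi| \sim 1$ (away from the sonic point $\xi = 0$) as the region of second-order non-degeneracy of the phase, controlling the oscillatory tail against the slowly growing amplitude $|\xi|^{2s}$ --- which is exactly what forces the ceiling $s \le \tfrac34$, matching $s \le \tfrac3q$ at the corner $q = 4$ --- and bookkeeping the rescaling so that the $\tau$-dependence collapses to the single power $|\tau|^{-(1+2s)/5}$. Once that is in place, the $TT^*$/Hardy--Littlewood--Sobolev step and the interpolation are routine.
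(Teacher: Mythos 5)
Your argument is correct, but the paper offers no proof of this proposition at all---it is quoted directly from Kenig--Ponce--Vega \cite{KPV91}---and what you have written is a faithful reconstruction of that standard argument: the rescaled oscillatory-integral kernel bound $\|\mathcal{F}^{-1}[|\xi|^{2s}e^{\frac{i}{5}\tau\xi^5}]\|_{L^\infty}\lesssim|\tau|^{-(1+2s)/5}$ for $0\le s\le\frac34$ (van der Corput at the stationary scale giving exactly the ceiling $s\le\frac34$), then $TT^*$ with Hardy--Littlewood--Sobolev for the $r=\infty$ edge, then Stein interpolation against the trivial $L^\infty_tL^2_x$ endpoint, with the exponent bookkeeping checking out ($q_0\in[4,10]$ on the edge corresponding precisely to $0\le s\le\frac{3}{q}$ after interpolation). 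No gaps worth flagging.
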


We use the bilinear estimate, which was proved by Kwon \cite{Kwo08}.

\begin{prop} \label{prop:bilinear}
For any $0< \vr \ll 1$, we have
\[
\| uv \|_{L^2}
\lesssim \| u \|_{Y^{-\frac{3}{4},\frac{1}{2}-\vr}} \| v \|_{Y^{0,\frac{1}{2}+\vr}}.
\]
\end{prop}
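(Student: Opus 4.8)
The plan is to run the standard $X^{s,b}$ bilinear argument driven by the resonance identity of the fifth-order flow. By Plancherel's theorem and duality, writing $\wh{u} = \lr{\xi_1}^{3/4}\Lr{\sigma_1}^{-(\frac12-\vr)}f$ and $\wh{v} = \Lr{\sigma_2}^{-(\frac12+\vr)}g$ with $f,g\ge 0$, $\|f\|_{L^2}=\|u\|_{Y^{-3/4,1/2-\vr}}$, $\|g\|_{L^2}=\|v\|_{Y^{0,1/2+\vr}}$, it suffices to prove
\[
\int_{\Gamma} \frac{\lr{\xi_1}^{3/4}\, f(\tau_1,\xi_1)\, g(\tau_2,\xi_2)\, h(\tau,\xi)}{\Lr{\sigma_1}^{\frac12-\vr}\,\Lr{\sigma_2}^{\frac12+\vr}} \lesssim \|f\|_{L^2}\|g\|_{L^2}\|h\|_{L^2}
\]
for all $h\ge 0$, where $\Gamma=\{\tau_1+\tau_2=\tau,\ \xi_1+\xi_2=\xi\}$, $\sigma:=\tau-\tfrac15\xi^5$ and $\sigma_j:=\tau_j-\tfrac15\xi_j^5$. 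The mechanism is the algebraic identity
\[
\sigma-\sigma_1-\sigma_2=-\tfrac15\bigl(\xi^5-\xi_1^5-\xi_2^5\bigr)=-\xi_1\xi_2\xi\bigl(\xi_1^2+\xi_1\xi_2+\xi_2^2\bigr),
\]
combined with $\xi_1^2+\xi_1\xi_2+\xi_2^2=\tfrac34\xi^2+\tfrac14(\xi_1-\xi_2)^2\sim\max(|\xi_1|,|\xi_2|)^2$, so that $\max(\Lr{\sigma},\Lr{\sigma_1},\Lr{\sigma_2})\gtrsim |\xi_1||\xi_2||\xi|\max(|\xi_1|,|\xi_2|)^2$ whenever none of $\xi,\xi_1,\xi_2$ is $O(1)$.

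I would localize dyadically, $|\xi|\sim N$, $|\xi_j|\sim N_j$, $\Lr{\sigma}\sim L$, $\Lr{\sigma_j}\sim L_j$, and split the frequency interactions into high$\,\times\,$high$\,\to\,$low ($N_1\sim N_2\gtrsim N$), high$\,\times\,$low$\,\to\,$high ($\max(N_1,N_2)\sim N\gg\min(N_1,N_2)$), comparable ($N\sim N_1\sim N_2$), and the low-frequency ranges $\min(N,N_1,N_2)\lesssim 1$ (where $\lr{\xi_1}^{3/4}$ is harmless). In each interaction the identity above identifies which of $L,L_1,L_2$ dominates, and one spends that factor on the derivative loss $\lr{\xi_1}^{3/4}\sim N_1^{3/4}$ and on the $\vr$-deficit in the denominators. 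Because $L^4_{t,x}$ is unavailable for the fifth-order group, the primary tool is the bilinear $L^2$ bound obtained by a Cauchy--Schwarz count on $\Gamma$ via the transversality $\partial_{\xi_1}(\sigma_1+\sigma_2)=\xi_2^4-\xi_1^4=-(\xi_1-\xi_2)\xi(\xi_1^2+\xi_2^2)$, namely
\[
\bigl\|(P_{N_1}Q_{L_1}w_1)(P_{N_2}Q_{L_2}w_2)\bigr\|_{L^2_{t,x}}\lesssim\Bigl(\frac{\min(L_1,L_2)\,L_{\mathrm{med}}}{|\xi_1^4-\xi_2^4|}\Bigr)^{\frac12}\|w_1\|_{L^2}\|w_2\|_{L^2}
\]
for separated frequency blocks, together with its variant in which the output and one input play the roles of $w_1,w_2$ (so that whichever modulation is largest always appears with a usable negative power); the $L^6_{t,x}$ and $L^{12}_{t,x}$ Strichartz estimates from Proposition \ref{prop:Str} serve as auxiliaries, e.g.\ to bound a low-frequency factor in $L^\infty_{t,x}$ via Bernstein. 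Inserting these into the dyadic sum, distributing the resonance gain, and using that each dyadic sum $\sum_{L}L^{-\vr}$ converges (absorbing the finitely many logarithms) closes the estimate; this is exactly Kwon's argument in \cite{Kwo08}.

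The main obstacle is the high$\,\times\,$high$\,\to\,$low interaction: there $\lr{\xi_1}^{3/4}\sim N^{3/4}$ (with $N_1\sim N_2\sim N$ the large scale and $|\xi|\sim N_0\ll N$) must be recovered from the resonance $\gtrsim N_0N^4$ and the bilinear gain $|\xi_1^4-\xi_2^4|^{-1/2}$, which here is forced to be of the favourable size $(N_0N^3)^{-1/2}$ since $\xi_1\approx-\xi_2$ makes $|\xi_1-\xi_2|\sim N$. The delicate point is to always retain a genuine negative power of the largest modulation: one uses that a strict inequality $L\gg(\text{resonance})$ forces $\max(L_1,L_2)\gtrsim L$, so the dominant modulation is effectively comparable to the resonance or carried by an input, allowing the modulation series to be summed. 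A secondary subtlety is the asymmetry between $u$ and $v$ — the weight $\lr{\xi_1}^{-3/4}$ and the lighter exponent $\tfrac12-\vr$ both sit on $u$ — so in the subcase where $|\xi_2|$ is the largest frequency the loss lands on the ``wrong'' factor and must be closed purely through the bilinear estimate and the resonance; one also checks that the set $|\xi_1|\approx|\xi_2|$ with $\xi_1,\xi_2$ of the same sign, where $|\xi_1^4-\xi_2^4|$ degenerates, belongs to the comparable-frequency interaction ($\xi=\xi_1+\xi_2$ is then $\sim N$ and the resonance is $\sim N^5$), hence never interferes with the high$\,\times\,$high$\,\to\,$low analysis.
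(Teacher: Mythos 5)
The paper offers no proof of this proposition: it is quoted verbatim from Kwon \cite{Kwo08}, so there is nothing internal to compare your argument against. Your outline is, however, a faithful reconstruction of the standard $X^{s,b}$ argument that underlies it. The key identities are correct --- the resonance factorization $\xi^5-\xi_1^5-\xi_2^5=5\xi\xi_1\xi_2(\xi_1^2+\xi_1\xi_2+\xi_2^2)$ with $\xi_1^2+\xi_1\xi_2+\xi_2^2\sim\max(|\xi_1|,|\xi_2|)^2$, and the transversality $\xi_2^4-\xi_1^4=-(\xi_1-\xi_2)\xi(\xi_1^2+\xi_2^2)$ driving the Cauchy--Schwarz bilinear $L^2$ bound --- and your case analysis correctly isolates where the work lies: the high$\times$high$\to$low interaction, the observation that $|\sigma|\gg|\xi\xi_1\xi_2|\max(|\xi_1|,|\xi_2|)^2$ forces $\max(|\sigma_1|,|\sigma_2|)\gtrsim|\sigma|$ (so a usable negative power of the dominant modulation is always available), and the asymmetric placement of the weight $\lr{\xi_1}^{-3/4}$ and of the exponents $\frac12\mp\vr$. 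What you present is a plan rather than a completed verification --- the dyadic summation over frequencies and modulations in each regime is asserted rather than carried out, and that bookkeeping (in particular keeping the stray factors $(L_1/L_2)^{\vr}$ under control when the output modulation dominates) is exactly the content of Kwon's proof --- but the plan is the correct one and is indeed the argument of \cite{Kwo08}, which is all the paper itself invokes.
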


From the interpolation, we obtain the following.

\begin{lem} \label{lem:interpolation}
Let $0 < \vr \ll 1$.
Then, we have
\begin{equation} \label{interp1}
\| u \|_{L_{t,x}^6}
\lesssim \| u \|_{Y^{-\frac{1}{2}+10 \vr,\frac{1}{2}-\vr}}, \quad
\| u \|_{L_{t,x}^8}
\lesssim \| u \|_{Y^{-\frac{1}{4}+10\vr,\frac{1}{2}-\vr}}.
\end{equation}
Moreover, for $N_1, N_2 \in 2^{\delta \Z}$ with $N_1 \gg N_2$, we have
\begin{equation} \label{interp2}
\begin{aligned}
& \| P_{N_1}u P_{N_2}v \|_{L^2} \\
& \lesssim N_1^{-2+10\vr} \min \left( \| P_{N_1} u \|_{Y^{0,\frac{1}{2}-\vr}} \| P_{N_2} v \|_{Y^{0,\frac{1}{2}+\vr}}, \| P_{N_1} u \|_{Y^{0,\frac{1}{2}+\vr}} \| P_{N_2} v \|_{Y^{0,\frac{1}{2}-\vr}} \right)
\end{aligned}
\end{equation}
\end{lem}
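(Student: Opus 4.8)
The plan for \eqref{interp1} is to interpolate the diagonal Strichartz estimates of Proposition \ref{prop:Str} against the trivial identity $\| u \|_{L^2_{t,x}} = \| u \|_{Y^{0,0}}$, treating low and high frequencies separately. First I would fix an auxiliary parameter $\vr_0$ with $0<\vr_0 \ll \vr$ and a diagonal Strichartz exponent $q_0$ slightly larger than $6$; Proposition \ref{prop:Str} with $q=r=q_0$ and $s_0 = \frac{6}{q_0}-\frac12 \in (0,\frac12)$ (which satisfies the admissibility $0 \le s_0 \le \frac{3}{q_0}$ for $q_0$ in the right range), together with duality and the transference principle (see e.g. \cite{Tao} — this is where the condition $b>\frac12$ enters, hence the need for $\vr_0>0$), gives $\| P_{>1}u \|_{L^{q_0}_{t,x}} \lesssim \| P_{>1}u \|_{Y^{-s_0,\frac12+\vr_0}}$. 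Complex interpolation of the weighted $L^2_{\tau,\xi}$-spaces $Y^{s,b}$ then yields, for $\theta \in (0,1)$ with $\frac16 = \frac{1-\theta}{2} + \frac{\theta}{q_0}$, the bound $\| P_{>1}u \|_{L^6_{t,x}} \lesssim \| P_{>1}u \|_{Y^{-\theta s_0,\,\theta(\frac12+\vr_0)}}$. Choosing $\theta = \frac{1-2\vr}{1+2\vr_0}$ (and $q_0$ accordingly) makes the modulation index exactly $\frac12-\vr$, while a direct computation gives $\theta s_0 = \frac{5\theta}{2}-2 \ge \frac12 - 10\vr$ for $\vr$ small, which is the claimed inequality on $P_{>1}u$. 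For the low-frequency part I would instead interpolate $\| P_{\le 1}u \|_{L^2_{t,x}} = \| P_{\le 1}u \|_{Y^{0,0}}$ against $\| P_{\le 1}u \|_{L^\infty_{t,x}} \lesssim \| P_{\le 1}u \|_{L^\infty_t L^2_x} \lesssim \| P_{\le 1}u \|_{Y^{0,\frac12+\vr_0}}$ (Bernstein plus $Y^{0,\frac12+\vr_0}\hookrightarrow L^\infty_t L^2_x$), obtaining $\| P_{\le 1}u \|_{L^6_{t,x}} \lesssim \| P_{\le 1}u \|_{Y^{0,\frac13+}} \le \| u \|_{Y^{-\frac12+10\vr,\frac12-\vr}}$ since the Sobolev weight is $\sim 1$ on $|\xi|\le 1$. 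The $L^8$ bound follows the same way with $q_0$ near $8$ (so $s_0$ near $\frac14$) and, for the low-frequency piece, $\theta$ near $\frac34$.

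For \eqref{interp2} the plan is to combine a frequency-localized bilinear Strichartz estimate with crude bounds. The first ingredient is that for $N_1 \gg N_2$,
\[
\| P_{N_1}(e^{\frac15 t\dx^5}f)\, P_{N_2}(e^{\frac15 t\dx^5}g) \|_{L^2_{t,x}} \lesssim N_1^{-2} \| f \|_{L^2} \| g \|_{L^2}.
\]
I would prove this by writing the spacetime Fourier transform of the product as the one-dimensional convolution supported on $\{\tau = \frac15(\eta^5 + (\xi-\eta)^5)\}$, with $|\eta|\sim|\xi|\sim N_1$ and $|\xi-\eta|\sim N_2$, and changing variables from $\eta$ to $\mu = \frac15(\eta^5+(\xi-\eta)^5)$: since $\partial_\eta\big(\eta^5+(\xi-\eta)^5\big) = \eta^4-(\xi-\eta)^4$ has size $\sim N_1^4$ on this region, one gains a factor $N_1^{-4}$ pointwise; squaring, integrating, and changing variables back (the Jacobian $\sim N_1^4$) leaves $N_1^{-4}\| f \|_{L^2}^2\| g \|_{L^2}^2$. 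Transference (again using $b>\frac12$) upgrades this to $\| P_{N_1}u\, P_{N_2}v \|_{L^2} \lesssim N_1^{-2} \| P_{N_1}u \|_{Y^{0,\frac12+\vr_0}} \| P_{N_2}v \|_{Y^{0,\frac12+\vr_0}}$. The second ingredient is the elementary bound $\| P_{N_1}u\, P_{N_2}v \|_{L^2} \le \| P_{N_1}u \|_{L^2_{t,x}} \| P_{N_2}v \|_{L^\infty_{t,x}} \lesssim N_2^{\frac12} \| P_{N_1}u \|_{Y^{0,0}} \| P_{N_2}v \|_{Y^{0,\frac12+\vr_0}}$ (Bernstein on $P_{N_2}v$), together with its mirror image obtained by swapping the roles of the two factors.

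Bilinear complex interpolation between the first ingredient and the appropriate elementary bound, at weight $\theta = \frac{1-2\vr}{1+2\vr_0} = 1 - O(\vr)$, then produces
\[
\| P_{N_1}u\, P_{N_2}v \|_{L^2} \lesssim N_1^{-2\theta} N_2^{\frac{1-\theta}{2}} \| P_{N_1}u \|_{Y^{0,\frac12-\vr}} \| P_{N_2}v \|_{Y^{0,\frac12+\vr}};
\]
since $N_2 \le N_1$ and $-2\theta + \frac{1-\theta}{2} = -2 + O(\vr) \ge -2 + 10\vr$ for $\vr$ small, this is the first of the two bounds in the minimum, and the second follows by symmetry.

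The main obstacle is bookkeeping rather than conceptual. The transference principle only embeds $Y^{s,b}$ into spacetime Lebesgue spaces when $b>\frac12$, so reaching the endpoint $b = \frac12-\vr$ forces an interpolation against $b=0$ bounds, and the auxiliary exponents ($q_0$ just above $6$ or $8$, the weight $\theta$ just below $1$, and $\vr_0 \ll \vr$) must be tuned so that both the Sobolev and modulation indices land exactly at the claimed values, and, in \eqref{interp2}, so that the two frequency losses $N_1^{-2\theta}$ and $N_2^{(1-\theta)/2} \le N_1^{(1-\theta)/2}$ combine to exactly $N_1^{-2+10\vr}$. A secondary point requiring care is that the Strichartz estimates carry a positive-order homogeneous derivative $|\dx|^{s_0}$, so the negative Sobolev indices on the right-hand side of \eqref{interp1} are harmless only after the low-frequency part is split off and treated separately as above.
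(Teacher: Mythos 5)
Your proposal is correct, and it follows the same overall strategy as the paper (split off low frequencies, then interpolate a $b>\tfrac12$ Strichartz-type bound against a $b<\tfrac12$ bound to land at modulation exponent $\tfrac12-\vr$), but with different choices of interpolation endpoints. For \eqref{interp1} the paper keeps the Lebesgue exponent fixed at $6$ (resp.\ $8$) and interpolates the transference bound $Y^{-\frac12,\frac12+\vr_1}\hookrightarrow L^6_{t,x}$ against a second $L^6_{t,x}$ bound coming from Sobolev embedding in $x$, namely $Y^{\frac13,\frac13+\frac23\vr_1}\hookrightarrow L^6_tL^2_x\cap$ ($\dot H^{1/3}\hookrightarrow L^6$), with a small interpolation weight $12\vr$; you instead fix the $b=0$ endpoint at Plancherel ($Y^{0,0}=L^2_{t,x}$) and move the Lebesgue exponent to $q_0$ slightly above $6$, using a weight close to $1$. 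Both are legitimate segments in the $(p^{-1},s,b)$ parameter space ending at the same point, and your arithmetic ($\theta s_0=\tfrac{5\theta}{2}-2\ge\tfrac12-10\vr$, and the analogous computation for $L^8$) checks out; the paper's choice has the minor advantage of not needing the admissibility window $6\le q_0\le 12$ of Proposition \ref{prop:Str}, while yours avoids invoking a second family of embeddings. For \eqref{interp2} the difference is analogous: the paper interpolates the bilinear refinement (cited from \cite{CLMW09}) against an $L^4_tL^\infty_x\times L^4_tL^2_x$ H\"older bound, keeping one factor at $b=\tfrac12+\vr$ throughout, whereas you interpolate against the cruder $L^\infty_{t,x}\times L^2_{t,x}$ bound with Bernstein; your exponent count $-2\theta+\tfrac{1-\theta}{2}\ge -2+10\vr$ for $\theta\ge 1-4\vr$ is right, and the asymmetric treatment needed for the second entry of the minimum (Bernstein on the high-frequency factor, costing $N_1^{1/2}$ rather than $N_2^{1/2}$) still closes. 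You also rederive the bilinear refinement by the standard resonance/change-of-variables argument rather than citing it, which is fine since $|\partial_\eta(\eta^5+(\xi-\eta)^5)|\sim N_1^4$ on the relevant region.
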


\begin{proof}
Interpolating $Y^{0,0} = L_t^2(\R; L_x^2(\R))$ and $Y^{0,\frac{1}{2}+\vr_1} \hookrightarrow L_t^{\infty} (\R; L_x^2(\R))$ for any $0< \vr_1 \ll 1$, we have
\[
Y^{0,\frac{1}{3}+\frac{2}{3}\vr_1} \hookrightarrow L_t^6 (\R; L_x^2(\R)).
\]
Proposition \ref{prop:Str} and the transference principle imply
\begin{align*}
\| u \|_{L_{t,x}^6}
& \lesssim \| P_{\le 1} u \|_{L_t^6 L_x^2} + \| P_{\ge 1} u \|_{L_{t,x}^6}
\lesssim \| P_{\le 1} u \|_{Y^{0,\frac{1}{3}+\frac{2}{3}\vr_1}} + \| P_{\ge 1} u \|_{Y^{-\frac{1}{2},\frac{1}{2}+\vr_1}} \\
& \lesssim \| u \|_{Y^{-\frac{1}{2},\frac{1}{2}+\vr_1}}.
\end{align*}
From Sobolev's embedding $\dot{H}^{\frac{1}{3}} (\R) \hookrightarrow L^6(\R)$, we have
\[
\| u \|_{L_{t,x}^6}
\lesssim \| |\dx|^{\frac{1}{3}} u \|_{L_t^6 L_x^2}
\lesssim \| u \|_{Y^{\frac{1}{3}, \frac{1}{3}+\frac{2}{3}\vr_1}}.
\]
The interpolation $[Y^{-\frac{1}{2},\frac{1}{2}+\vr}, Y^{\frac{1}{3}, \frac{1}{3}+\vr}]_{12 \vr} = Y^{-\frac{1}{2}+10\vr, \frac{1}{2}-\vr}$ shows that
\[
\| u \|_{L_{t,x}^6}
\lesssim \| u \|_{Y^{-\frac{1}{2}+10\vr, \frac{1}{2}-\vr}}.
\]
Similarly, from embeddings $Y^{-\frac{1}{4}, \frac{1}{2}+\vr_1} \hookrightarrow L^8_{t,x}(\R^2)$, $Y^{\frac{3}{8},\frac{3}{8}+\frac{3}{4}\vr_1} \hookrightarrow L_{t,x}^8 (\R^2)$, and $[Y^{-\frac{1}{4},\frac{1}{2}+\vr}, Y^{\frac{3}{8}, \frac{3}{8}+\vr}]_{16\vr} = Y^{-\frac{1}{4}+10\vr, \frac{1}{2}-\vr}$, we have
\[
\| u \|_{L_{t,x}^8}
\lesssim \| u \|_{Y^{-\frac{1}{4}+10\vr, \frac{1}{2}-\vr}}.
\]

For the proof of \eqref{interp2}, we use the bilinear refinement of the Strichartz estimates:
For any $0<\vr_1, \vr_2 \ll 1$, we have
\[
\| P_{N_1}u P_{N_2}v \|_{L^2}
\lesssim N_1^{-2} \| P_{N_1} u \|_{Y^{0,\frac{1}{2}+\vr_1}} \| P_{N_2} v \|_{Y^{0,\frac{1}{2}+\vr_2}},
\]
which is a consequence of Lemma 3.2 in \cite{CLMW09}.
Because an interpolation yields that $Y^{-\frac{3}{4}, \frac{1}{2}+\vr_1} \hookrightarrow L_t^4(\R; L_x^{\infty}(\R))$ and $Y^{0, \frac{1}{4}+\frac{\vr_1}{2}} \hookrightarrow L_t^4 (\R; L_x^2(\R))$, we have
\begin{align*}
& \| P_{N_1}u P_{N_2}v \|_{L^2} \\
& \lesssim N_2^{-\frac{3}{4}} \min \left( \| P_{N_1} u \|_{Y^{0,\frac{1}{4}+\vr_1}} \| P_{N_2} v \|_{Y^{0,\frac{1}{2}+\vr_2}}, \| P_{N_1} u \|_{Y^{0,\frac{1}{4}+\vr_1}} \| P_{N_2} v \|_{Y^{0,\frac{1}{2}+\vr_2}} \right).
\end{align*}
By the interpolation $[Y^{0,\frac{1}{2}+\frac{\vr}{4}},Y^{0,\frac{1}{4}+\frac{\vr}{4}}]_{5\vr} = Y^{0,\frac{1}{2}-\vr}$, we obtain \eqref{interp2}.
\end{proof}

We observe the trilinear estimate in $Y^{s,b}$ spaces.

\begin{prop} \label{prop:trilinear}
Let $s_0, s_1, s_2, s_3 \in \R$.
Denote the decreasing rearrangement of $s_0,s_1,s_2,s_3$ by $s_0^{\ast}, s_1^{\ast}, s_2^{\ast}, s_3^{\ast}$ with $s_0^{\ast} \ge s_1^{\ast} \ge s_2^{\ast} \ge s_3^{\ast}$.
Assume that
\begin{align*}
& s_2^{\ast}+s_3^{\ast}>-4, \\
& s_1^{\ast}+s_2^{\ast}+s_3^{\ast}>-\frac{11}{4}, \\
& s_0+s_1+s_2+s_3 \ge -\frac{3}{2}.
\end{align*}
Then, there exists sufficiently small $\vr_0 = \vr_0 (s_0,s_1,s_2,s_3)>0$ such that for any $0 < \vr < \vr_0$, we have
\[
\| f_1f_2f_3 \|_{Y^{-s_0,-\frac{1}{2}+\vr}}
\lesssim \sum_{j=1}^3 \| f_j \|_{Y^{s_j, \frac{1}{2}-\vr}} \prod_{k \in \{ 1,2,3 \} \setminus \{ j \}} \| f_k \|_{Y^{s_k, \frac{1}{2}+\vr}}.
\]
\end{prop}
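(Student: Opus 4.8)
The plan is to reduce the multilinear $Y^{s,b}$ estimate to a product of bilinear $L^2$-type bounds via a Littlewood--Paley decomposition, following the strategy of Kwon \cite{Kwo08} but tracking the four regularity exponents $s_0,\dots,s_3$. First I would decompose $f_j = \sum_{N_j} P_{N_j} f_j$ for $j=1,2,3$ and the output frequency into $P_{N_0}(\cdot)$. Writing $N_0^\ast \ge N_1^\ast \ge N_2^\ast \ge N_3^\ast$ for the decreasing rearrangement of the dyadic frequencies involved, the usual frequency constraint forces $N_0^\ast \sim N_1^\ast$ (the two largest frequencies are comparable, because in a four-fold convolution the largest exponent must be balanced by the second-largest). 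By duality, it suffices to bound
\[
\left| \int f_1 f_2 f_3 \, g \, dx\, dt \right|
\]
with $g \in Y^{s_0, \frac{1}{2}-\vr}$, so the problem is symmetric in all four functions and I may freely relabel so that $s_0, s_1, s_2, s_3$ really are attached to the four inputs $f_1,f_2,f_3,g$; then I reorganize the sum according to which input carries $N_0^\ast$ and which carries $N_1^\ast$.

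The core step is: for each fixed dyadic configuration, pair the two highest-frequency factors together and estimate their product in $L^2$ using the bilinear refinement \eqref{interp2} from Lemma \ref{lem:interpolation}, which gives a gain of $(N_1^\ast)^{-2+10\vr}$; estimate the remaining two factors by Hölder, placing one in $L^6_{t,x}$ and one in $L^3_{t,x}$, or using $L^8_{t,x}\cdot L^8_{t,x}$, controlled via \eqref{interp1} and Bernstein; one of the two lower factors must absorb the $b = \frac{1}{2}-\vr$ loss while the others keep $b = \frac{1}{2}+\vr$, which is exactly the shape on the right-hand side. The frequency weights then assemble into
\[
(N_1^\ast)^{-2+C\vr} (N_0^\ast)^{-s_0^{\#}} (N_1^\ast)^{-s_1^{\#}} (N_2^\ast)^{-s_2^{\#}} (N_3^\ast)^{-s_3^{\#}},
\]
where $s_j^{\#}$ denotes the exponent attached to the factor that happens to carry $N_j^\ast$; using $N_0^\ast \sim N_1^\ast$ one rewrites this in terms of $s_j^\ast$ and checks that the three hypotheses
\[
s_2^\ast + s_3^\ast > -4,\quad s_1^\ast + s_2^\ast + s_3^\ast > -\tfrac{11}{4},\quad s_0+s_1+s_2+s_3 \ge -\tfrac32
\]
are precisely what is needed to make the resulting geometric series over $N_0^\ast \sim N_1^\ast$, $N_2^\ast$, $N_3^\ast$ summable: the first controls the regime $N_2^\ast \sim N_3^\ast$ small against the $(N_1^\ast)^{-2}$ gain split as $(N_1^\ast)^{-2} = (N_1^\ast)^{-4/1}\cdots$ — more precisely, the power $-2$ is distributed so that each of $N_2^\ast, N_3^\ast$ can be handed a share; the second handles the case where only $N_1^\ast$ is large; the third (the "sum of exponents" condition) handles the high-high-high-high interaction where all four frequencies are comparable and there is no room to lose even an endpoint.

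The main obstacle I expect is the bookkeeping of endpoint cases: when two of the $s_j$ are negative and large in absolute value, the naive Hölder split of the two low-frequency factors fails, and one must instead pair a high and a low factor in \eqref{interp2} (using the $N_1 \gg N_2$ form with the gain $N_1^{-2+10\vr}$ landing on the right factor) and iterate, or use Proposition \ref{prop:bilinear} directly on a high-low pair; deciding which pairing to use in each of the finitely many orderings of $(s_0^\ast,\dots,s_3^\ast)$ relative to the thresholds $-4, -\tfrac{11}{4}, -\tfrac32$, and verifying the $\vr$-losses never accumulate past the strict inequalities, is where the real work lies. The choice of $\vr_0$ depends on how close $s_2^\ast+s_3^\ast, s_1^\ast+s_2^\ast+s_3^\ast$ are to their thresholds, which is why the statement allows $\vr_0 = \vr_0(s_0,s_1,s_2,s_3)$.
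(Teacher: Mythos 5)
Your overall skeleton (duality, Littlewood--Paley decomposition, case analysis on the frequency configuration, reduction to bilinear $L^2$ bounds) matches the paper's, but your core step is wrong in a way that matters for the numerology. You propose to ``pair the two highest-frequency factors together and estimate their product in $L^2$ using the bilinear refinement \eqref{interp2}, which gives a gain of $(N_1^{\ast})^{-2+10\vr}$.'' The estimate \eqref{interp2} is stated, and is only valid, for $N_1 \gg N_2$: it is a high-low bilinear Strichartz refinement, and the two largest frequencies in a four-fold convolution are always comparable, so it never applies to that pair. For a comparable pair the only available tool here is Kwon's bilinear estimate (Proposition \ref{prop:bilinear}), which gains only $N^{-3/4}$. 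If you then put the two remaining (low) factors in $L^6_{t,x}\cdot L^3_{t,x}$ or $L^8_{t,x}\cdot L^8_{t,x}$, Bernstein costs you positive powers of $N_2^{\ast}, N_3^{\ast}$, and the resulting condition on the exponents is far more restrictive than the stated thresholds $-4$ and $-\frac{11}{4}$; in the high-high-low-low regime your scheme would require roughly $s_0+s_1+s_2+s_3$ positive rather than $s_2^{\ast}+s_3^{\ast}>-4$.

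The correct pairing --- the one the paper uses --- is the opposite of yours: split the four factors into two pairs so that each pair is either comparable-comparable (then apply Proposition \ref{prop:bilinear}, gain $N^{-3/4}$) or high-low (then apply \eqref{interp2}, gain $(N^{\mathrm{high}})^{-2+10\vr}$). Concretely, with $N_1\ge N_2\ge N_3$: when all four are comparable, pair $(N_0,N_2)$ and $(N_1,N_3)$ and use Proposition \ref{prop:bilinear} twice, giving $N^{-3/2}$ and the condition $s_0+s_1+s_2+s_3\ge-\frac32$; when exactly one frequency is much smaller, pair it with a high one via \eqref{interp2} and pair the remaining comparable two via Proposition \ref{prop:bilinear}, giving $N_1^{-11/4+10\vr}$ and the condition $s_1^{\ast}+s_2^{\ast}+s_3^{\ast}>-\frac{11}{4}$; when there are two separated low frequencies, form two high-low pairs and apply \eqref{interp2} twice, giving $N_1^{-4+20\vr}$ and the condition $s_2^{\ast}+s_3^{\ast}>-4$. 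You gesture at the high-low pairing in your final paragraph as a fallback for ``endpoint cases,'' but it is not a fallback --- it is the main mechanism, and without it the three hypotheses of the proposition cannot be recovered. Your attribution of which hypothesis governs which regime is correspondingly off (the ``$-2$ distributed among $N_2^{\ast},N_3^{\ast}$'' heuristic should be ``$-4$ from two applications of \eqref{interp2}'').
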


\begin{proof}
By duality argument and the Littlewood-Paley decompositions, it suffices to show that
\begin{align*}
& \sum_{N_0,N_1,N_2,N_3 \in 2^{\delta \N_0}} \left| \int_{\R^2} \ol{f_{N_0}} f_{N_1} f_{N_2} f_{N_3} dtdx \right| \\
& \lesssim \| f_0 \|_{Y^{s_0,\frac{1}{2}-\vr}} \sum_{j=1}^3 \| f_j \|_{Y^{s_j,\frac{1}{2}-\vr}} \prod_{k \in \{ 1,2,3 \} \setminus \{ j \}} \| f_k \|_{Y^{s_k,\frac{1}{2}+\vr}},
\end{align*}
where $f_{N_j} = P_{N_j} f_j$.
By symmetry, we may assume that
\[
N_1 \ge N_2 \ge N_3.
\]
We divide the proof into the three cases:
\begin{align*}
& \rm{(i)} \ N_0 \sim N_1 \sim N_3, \\
& \rm{(ii)} \ N_0 \sim N_1 \sim N_2 \gg N_3 \text{ or } N_1 \sim N_3 \gg N_0, \\
& \rm{(iii)} \ N_0 \sim N_1 \gg N_2 \ge N_3 \text{ or } N_1 \sim N_2 \gg N_0, N_3.
\end{align*}

Case (i): Proposition \ref{prop:bilinear} yields
\begin{align*}
& \sum_{\substack{N_0,N_1,N_2,N_3 \in 2^{\delta \N_0} \\ N_0 \sim N_1 \sim N_2 \sim N_3}} \left| \int_{\R^2} \ol{f_{N_0}} f_{N_1} f_{N_2} f_{N_3} dtdx \right| \\
& \le
\sum_{\substack{N_0,N_1,N_2,N_3 \in 2^{\delta \N_0} \\ N_0 \sim N_1 \sim N_2 \sim N_3}} \| \ol{f_{N_0}} f_{N_2} \|_{L^2} \| f_{N_1} f_{N_3} \|_{L^2} \\
& \lesssim \sum_{\substack{N_0,N_1,N_2,N_3 \in 2^{\delta \N_0} \\ N_0 \sim N_1 \sim N_2 \sim N_3}} N_0^{-\frac{3}{2}} \left( \prod_{j=0,1} \| f_{N_j} \|_{Y^{0,\frac{1}{2}-\vr}} \right) \left( \prod_{k=2,3} \| f_{N_k} \|_{Y^{0,\frac{1}{2}+\vr}} \right) \\
& \lesssim \| f_0 \|_{Y^{s_0,\frac{1}{2}-\vr}} \| f_1 \|_{Y^{s_1,\frac{1}{2}-\vr}} \| f_2 \|_{Y^{s_2,\frac{1}{2}+\vr}} \| f_3 \|_{Y^{s_3,\frac{1}{2}+\vr}},
\end{align*}
provided that $s_0+s_1+s_2+s_3 \ge -\frac{3}{2}$ and $\vr>0$ is sufficiently small.

Case (ii):
By Proposition \ref{prop:bilinear} and \eqref{interp2}, we have
\begin{align*}
\left| \int_{\R^2} \ol{f_{N_0}} f_{N_1} f_{N_2} f_{N_3} dtdx \right|
& \le
\| \ol{f_{N_0}} f_{N_3} \|_{L^2} \| f_{N_1} f_{N_2} \|_{L^2} \\
& \lesssim
N_1^{-\frac{11}{4}+10\vr} \left( \prod_{j=0,1} \| f_{N_j} \|_{Y^{0,\frac{1}{2}-\vr}} \right) \left( \prod_{k=2,3} \| f_{N_k} \|_{Y^{0,\frac{1}{2}+\vr}} \right).
\end{align*}
Because the condition (ii) implies that
\[
N_0^{-s_0} N_1^{-s_1} N_2^{-s_2} N_3^{-s_3}
\lesssim N_1^{-s_1^{\ast}-s_2^{\ast}-s_3^{\ast}} \min (N_0,N_3)^{-s_0^{\ast}},
\]
the summation under (ii) is bounded by
\[
\| f_0 \|_{Y^{s_0,\frac{1}{2}-\vr}} \| f_1 \|_{Y^{s_1,\frac{1}{2}-\vr}} \| f_2 \|_{Y^{s_2,\frac{1}{2}+\vr}} \| f_3 \|_{Y^{s_3,\frac{1}{2}+\vr}},
\]
provided that $s_1^{\ast}+s_2^{\ast}+s_3^{\ast} >-\frac{11}{4}$, $s_0+s_1+s_2+s_3>-\frac{11}{4}$, and $\vr>0$ is sufficiently small.

Case (iii):
By \eqref{interp2}, we have
\begin{align*}
\left| \int_{\R^2} \ol{f_{N_0}} f_{N_1} f_{N_2} f_{N_3} dtdx \right|
& \le \| \ol{f_{N_0}} f_{N_2} \|_{L^2} \| f_{N_1} f_{N_3} \|_{L^2} \\
& \lesssim
N_1^{-4+20\vr} \left( \prod_{j=0,1} \| f_{N_j} \|_{Y^{0,\frac{1}{2}-\vr}} \right) \left( \prod_{k=2,3} \| f_{N_k} \|_{Y^{0,\frac{1}{2}+\vr}} \right).
\end{align*}
Because the condition (iii) implies that
\begin{align*}
& N_0^{-s_0} N_1^{-s_1} N_2^{-s_2} N_3^{-s_3} \\
& \lesssim
\begin{cases}
N_1^{-s_2^{\ast}-s_3^{\ast}} N_2^{-s_1^{\ast}} N_3^{-s_0^{\ast}}, & \text{if } N_0 \sim N_1 \gg N_2 \ge N_3, \\ 
N_1^{-s_2^{\ast}-s_3^{\ast}} \max (N_0, N_3)^{-s_1^{\ast}} \min (N_0,N_3)^{-s_0^{\ast}}, & \text{if } N_1 \sim N_2 \gg N_0, N_3,
\end{cases}
\end{align*}
the summation under (iii) is bounded by
\[
\| f_0 \|_{Y^{s_0,\frac{1}{2}-\vr}} \| f_1 \|_{Y^{s_1,\frac{1}{2}-\vr}} \| f_2 \|_{Y^{s_2,\frac{1}{2}+\vr}} \| f_3 \|_{Y^{s_3,\frac{1}{2}+\vr}},
\]
provided that $s_2^{\ast}+s_3^{\ast} >-4$, $s_1^{\ast}+s_2^{\ast}+s_3^{\ast}>-4$, $s_0+s_1+s_2+s_3>-4$, and $\vr>0$ is sufficiently small.
\end{proof}

Moreover, we show the quinti-linear estimate in $Y^{s,b}$ spaces.

\begin{prop} \label{prop:quintilinear}
Let $s_0, s_1, s_2, s_3, s_4, s_5 \in \R$.
Denote the decreasing rearrangement of $s_0,s_1,s_2,s_3,s_4,s_5$ by $s_0^{\ast}, s_1^{\ast}, s_2^{\ast}, s_3^{\ast},s_4^{\ast},s_5^{\ast}$ with $s_0^{\ast} \ge s_1^{\ast} \ge s_2^{\ast} \ge s_3^{\ast} \ge s_4^{\ast} \ge s_5^{\ast}$.
Assume that
\begin{align*}
& s_4^{\ast}+s_5^{\ast}>-4, \\
& s_3^{\ast}+s_4^{\ast}+s_5^{\ast}>-\frac{7}{2}, \\
& s_2^{\ast}+s_3^{\ast}+s_4^{\ast}+s_5^{\ast}> -3, \\
& s_1^{\ast}+s_2^{\ast}+s_3^{\ast}+s_4^{\ast}+s_5^{\ast}>-3, \\
& s_0+s_1+s_2+s_3+s_4+s_5>-3.
\end{align*}
Then, there exists sufficiently small $\vr_0 = \vr_0 (s_0,s_1,s_2,s_3,s_4,s_5)>0$ such that for any $0 < \vr < \vr_0$, we have
\[
\| f_1f_2f_3f_4f_5 \|_{Y^{-s_0,-\frac{1}{2}+\vr}}
\lesssim \sum_{j=1}^5 \| f_j \|_{Y^{s_j, \frac{1}{2}-\vr}} \prod_{k \in \{ 1,2,3,4,5\} \setminus \{ j \}} \| f_k \|_{Y^{s_j, \frac{1}{2}+\vr}}.
\]
\end{prop}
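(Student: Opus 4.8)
The plan is to follow the strategy of the proof of Proposition~\ref{prop:trilinear}. By a duality argument and a Littlewood--Paley decomposition it suffices to bound
\[
\sum_{N_0,\dots,N_5\in 2^{\delta\N_0}}\left|\int_{\R^2}\ol{f_{N_0}}\,f_{N_1}f_{N_2}f_{N_3}f_{N_4}f_{N_5}\,dt\,dx\right|,
\qquad f_{N_j}=P_{N_j}f_j,
\]
by $\|f_0\|_{Y^{s_0,1/2-\vr}}$ times the sum over $j\in\{1,\dots,5\}$ appearing on the right-hand side of the asserted inequality. By the symmetry among $f_1,\dots,f_5$ we may assume $N_1\ge N_2\ge N_3\ge N_4\ge N_5$; the frequency relation $\xi_0=\xi_1+\cdots+\xi_5$ then forces $N_0\lesssim N_1$ and, since the two largest of the six frequencies are comparable, either $N_0\sim N_1$ or $N_2\sim N_1$.

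I would then run a case analysis on how the six frequencies cluster, in parallel with cases (i)--(iii) of Proposition~\ref{prop:trilinear} but with more subcases. When all six frequencies are comparable, $N_0\sim N_1\sim\dots\sim N_5$, I would apply H\"older's inequality to write the integral as a product of six $L^6_{t,x}$ norms and invoke the embeddings $Y^{-1/2+10\vr,1/2-\vr}\hookrightarrow L^6_{t,x}$ and $Y^{-1/2,1/2+\vr}\hookrightarrow L^6_{t,x}$ from the proof of Lemma~\ref{lem:interpolation}; this produces a gain $N_1^{-3+C\vr}$ in the single free dyadic parameter, and the hypothesis $s_0+\dots+s_5>-3$ makes the sum converge, the two $\frac12-\vr$ modulation indices being routed to $f_0$ and one of $f_1,\dots,f_5$. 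In all configurations with a genuine frequency gap I would instead extract one bilinear $L^2$ factor carrying the top two (or the top-most and a low) frequencies and place the four remaining functions in $L^8_{t,x}$, using H\"older with $\frac12+\frac18+\frac18+\frac18+\frac18=1$: here the refined bilinear estimate \eqref{interp2} supplies a gain $N_1^{-2+10\vr}$ whenever the two paired frequencies are well separated, Proposition~\ref{prop:bilinear} supplies $N_1^{-3/4}$ when they are comparable, and each of the four remaining factors contributes $N_j^{-1/4}$ via $Y^{-1/4+10\vr,1/2-\vr}\hookrightarrow L^8_{t,x}$ or $Y^{-1/4,1/2+\vr}\hookrightarrow L^8_{t,x}$. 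A useful principle is to pair the dual piece $\ol{f_{N_0}}$ with the top input $f_{N_1}$ whenever $N_0\ll N_1$, so that \eqref{interp2} applies to that pair and, combined with the $N_1^{-1}$ coming from four $L^8$ factors, recovers the full gain $N_1^{-3}$ even when all five inputs are at the top frequency.

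In each case I would then perform the dyadic summation, bounding the regularity weight $\prod_{j}N_j^{-s_j}$ by a product of powers of the distinct frequency scales with exponents read off from the decreasing rearrangement $s_0^{\ast}\ge\dots\ge s_5^{\ast}$, exactly as in Proposition~\ref{prop:trilinear}; the five stated thresholds are precisely what is needed to make every resulting dyadic series converge: $s_4^{\ast}+s_5^{\ast}>-4$ for the most separated configurations (two bilinear gains available), $s_3^{\ast}+s_4^{\ast}+s_5^{\ast}>-\frac72$ and $s_2^{\ast}+s_3^{\ast}+s_4^{\ast}+s_5^{\ast}>-3$ for the intermediate ones, $s_1^{\ast}+\dots+s_5^{\ast}>-3$ for a single dominant frequency, and $s_0+\dots+s_5>-3$ for the unseparated case. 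The routing of the $\frac12-\vr$ modulation index is never an obstruction, since \eqref{interp2} comes with the minimum over both placements, Proposition~\ref{prop:bilinear} already absorbs one $\frac12-\vr$, and the $L^6_{t,x}$ and $L^8_{t,x}$ embeddings are available with either modulation index; in each case we just choose $j\in\{1,\dots,5\}$ on the right-hand side to match the input that carries it.

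The step I expect to be the main obstacle is the case bookkeeping: there are substantially more frequency configurations than in the trilinear case, and in each one must choose which two frequencies enter the bilinear $L^2$ factor and which factor accompanies $\ol{f_{N_0}}$ among the $L^8_{t,x}$ pieces so that, after bounding the regularity weight by powers of the rearranged exponents, every free dyadic parameter appears with a strictly negative exponent under the stated hypotheses. Verifying that the five numerical thresholds suffice across all cases is delicate; the analytic inputs (Propositions~\ref{prop:Str} and~\ref{prop:bilinear}, the bilinear refinement \eqref{interp2}, and the $L^6_{t,x}$, $L^8_{t,x}$ embeddings) are all already available.
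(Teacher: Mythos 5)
Your overall architecture matches the paper's: duality plus Littlewood--Paley, the symmetry reduction $N_1\ge\cdots\ge N_5$, the all-comparable case via six $L^6_{t,x}$ factors (giving the threshold $s_0+\cdots+s_5>-3$), and, for the configuration with a single isolated low frequency (the paper's case (ii)), one bilinear $L^2$ factor $\|\ol{f_{N_0}}f_{N_5}\|_{L^2}$ together with four $L^8_{t,x}$ factors.

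However, there is a genuine gap in your treatment of the most separated configurations, i.e.\ $N_0\sim N_1\gg N_4\ge N_5$ or $N_1\sim N_2\gg N_0,N_5$ (the paper's case (iii)). Your declared H\"older splitting $\frac12+\frac18+\frac18+\frac18+\frac18$ admits only one bilinear $L^2$ factor, so the total high-frequency gain it can produce is at most $N_1^{-2+10\vr}$ from \eqref{interp2} times $N_1^{-1/4}$ from the single $L^8$ factor sitting at frequency $\sim N_1$, i.e.\ about $N_1^{-9/4}$. This is not enough to reach the threshold $s_4^{\ast}+s_5^{\ast}>-4$: take for instance $s_0=s_1=-\frac{19}{10}$ and $s_2=\cdots=s_5=10$, which satisfies all five hypotheses, and consider the block $N_0\sim N_1=N\gg N_2=\cdots=N_5\sim 1$, where the weight is $N^{19/5}$ while your gain is only $N^{-9/4}$; the dyadic sum diverges. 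You gesture at the correct numerology in the parenthesis ``two bilinear gains available,'' but your H\"older exponents cannot realize two bilinear $L^2$ factors. The paper instead uses the splitting $\frac12+\frac12+\infty+\infty$ in this case: it chooses $\jmath\in\{2,4\}$ with $N_0\not\sim N_\jmath$ and $N_1\sim\max(N_0,N_\jmath)$, estimates $\|\ol{f_{N_0}}f_{N_\jmath}\|_{L^2}\,\|f_{N_1}f_{N_5}\|_{L^2}$ by two applications of \eqref{interp2} (total gain $N_1^{-4+20\vr}$), and places the remaining two factors in $L^\infty_{t,x}$ via Bernstein at the cost of $N_3^{1/2}N_{\jmath'}^{1/2}$; these losses at the intermediate frequencies are exactly what the thresholds $s_3^{\ast}+s_4^{\ast}+s_5^{\ast}>-\frac72$ and $s_2^{\ast}+s_3^{\ast}+s_4^{\ast}+s_5^{\ast}>-3$ are calibrated to absorb. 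To repair your argument you must switch to this (or an equivalent) two-bilinear scheme whenever at least two of the six frequencies lie strictly below the top one.
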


\begin{proof}
By duality argument and the Littlewood-Paley decompositions, it suffices to show that
\begin{align*}
& \sum_{N_0,N_1,N_2,N_3,N_4,N_5 \in 2^{\delta \N_0}} \left| \int_{\R^2} \ol{f_{N_0}} \prod_{j=1}^5 f_{N_j} dtdx \right| \\
& \lesssim \| f_0 \|_{Y^{s_0, \frac{1}{2}-\vr}} \sum_{j=1}^5 \| f_j \|_{Y^{s_j, \frac{1}{2}-\vr}} \prod_{k \in \{ 1,2,3,4,5\} \setminus \{ j \}} \| f_k \|_{Y^{s_j, \frac{1}{2}+\vr}},
\end{align*}
where $f_{N_j} = P_{N_j} f_j$ for $0 \le j \le 5$.
By symmetry, we may assume that
\[
N_1 \ge N_2 \ge N_3 \ge N_4 \ge N_5.
\]
Then, it is reduced to showing that
\[
\left| \int_{\R^2} \ol{f_{N_0}} \prod_{j=1}^5 f_{N_j} dtdx \right|
\lesssim N_1^{-\vr} \| f_{N_0} \|_{Y^{s_0, \frac{1}{2}-\vr}} \sum_{j=1}^5 \| f_{N_j} \|_{Y^{s_j, \frac{1}{2}-\vr}} \prod_{k \neq j} \| f_{N_k} \|_{Y^{s_j, \frac{1}{2}+\vr}}.
\]
Here, the factor $N_1^{-\vr}$ is needed to sum up, because $N_1 \sim \max_{0 \le j \le 5} N_j$.
We divide the proof into the three cases:
\begin{align*}
& \rm{(i)} \ N_0 \sim N_1 \sim N_5, \\
& \rm{(ii)} \ N_0 \sim N_1 \sim N_4 \gg N_5 \text{ or }N_1 \sim N_5 \gg N_0, \\
& \rm{(iii)} \ N_0 \sim N_1 \gg N_4 \ge N_5 \text{ or } N_1 \sim N_2 \gg N_0, N_5.
\end{align*}

Case (i):
By \eqref{interp1}, we have
\begin{align*}
\left| \int_{\R^2} \ol{f_{N_0}} \prod_{j=1}^5 f_{N_j} dtdx \right|
& \le \prod_{j=0}^5 \| f_{N_j} \|_{L_{t,x}^6} \\
& \lesssim N_1^{-3-s_0-s_1-s_2-s_3-s_4-s_5+60\vr} \prod_{j=0}^5 \| f_{N_j} \|_{Y^{s_j, \frac{1}{2}-\vr}} \\
& \lesssim N_1^{-\vr} \prod_{j=0}^5 \| f_{N_j} \|_{Y^{s_j, \frac{1}{2}-\vr}},
\end{align*}
provided that $s_0+s_1+s_2+s_3+s_4+s_5 >-3$ and $\vr>0$ is sufficiently small.

Case (ii):
By \eqref{interp1} and \eqref{interp2}, we have
\begin{align*}
& \left| \int_{\R^2} \ol{f_{N_0}} \prod_{j=1}^5 f_{N_j} dtdx \right| \\
& \le \| \ol{f_{N_0}} f_{N_5} \|_{L^2} \prod_{j=1}^4 \| f_{N_j} \|_{L_{t,x}^8} \\
& \lesssim N_0^{-s_0} N_1^{-3-s_1-s_2-s_3-s_4+50\vr} N_5^{-s_5} \left( \prod_{j=0}^4 \| f_{N_j} \|_{Y^{s_j, \frac{1}{2}-\vr}} \right) \| f_{N_5} \|_{Y^{s_5, \frac{1}{2}+\vr}} \\
& \lesssim N_0^{-\vr} \left( \prod_{j=0}^4 \| f_{N_j} \|_{Y^{s_j, \frac{1}{2}-\vr}} \right) \| f_{N_5} \|_{Y^{s_5, \frac{1}{2}+\vr}},
\end{align*}
provided that
\[
s_1^{\ast}+s_2^{\ast}+s_3^{\ast}+s_4^{\ast}+s_5^{\ast}> -3, \quad s_0+s_1+s_2+s_3+s_4+s_5 >-3
\]
and $\vr>0$ is sufficiently small.

Case (iii):
There exists $\jmath \in \{ 2,4 \}$ such that $N_0 \not\sim N_{\jmath}$ and $N_1 \sim \max (N_0, N_{\jmath})$.
Set $\jmath' = \begin{cases} 4, & \text{if } \jmath=2, \\ 2, & \text{if } \jmath=4. \end{cases}$
By \eqref{interp2} and Bernstein's inequality, we have
\begin{align*}
\left| \int_{\R^2} \ol{f_{N_0}} \prod_{j=1}^5 f_{N_j} dtdx \right|
& \le \| \ol{f_{N_0}} f_{N_{\jmath}} \|_{L^2} \| f_{N_1} f_{N_5} \|_{L^2} \prod_{k= 3, \jmath'} \| f_{N_k} \|_{L_{t,x}^{\infty}} \\
& \lesssim N_0^{-s_0} N_1^{-4-s_1+20\vr} N_3^{\frac{1}{2}-s_3} N_5^{-s_5} N_{\jmath}^{-s_{\jmath}} N_{\jmath'}^{\frac{1}{2}-s_{\jmath'}} \\
& \quad \times 
\left( \prod_{l=0,1} \| f_{N_l} \|_{Y^{s_l,\frac{1}{2}-\vr}} \right)
\prod_{k=2}^5 \| f_{N_k} \|_{Y^{s_k, \frac{1}{2}+\vr}} \\
& \lesssim N_1^{-\vr} \| f_{N_0} \|_{Y^{s_0,\frac{1}{2}-\vr}} \| f_{N_1} \|_{Y^{s_1,\frac{1}{2}-\vr}} \prod_{k=2}^5 \| f_{N_k} \|_{Y^{s_k, \frac{1}{2}+\vr}},
\end{align*}
provided that
\begin{align*}
& s_4^{\ast}+s_5^{\ast}>-4, \quad
s_3^{\ast}+s_4^{\ast}+s_5^{\ast}>-\frac{7}{2}, \quad
s_2^{\ast}+s_3^{\ast}+s_4^{\ast}+s_5^{\ast}> -3, \\
& s_1^{\ast}+s_2^{\ast}+s_3^{\ast}+s_4^{\ast}+s_5^{\ast}>-3, \quad
 s_0+s_1+s_2+s_3+s_4+s_5>-3
\end{align*}
and $\vr>0$ is sufficiently small.
\end{proof}

We are now in the position to prove Proposition \ref{prop:WP}.

\begin{proof}[Proof of Proposition \ref{prop:WP}]
We define the operator $K_{u_0} (u)$ by
\[
K_{u_0} (u) = e^{\frac{1}{5} t\dx^5} u_0 + \int_0^t e^{\frac{1}{5} (t-t') \dx^5} \dx \left( \alpha (2 u^2 \dx^2 u + 3 u (\dx u)^2) + \beta u^5 \right) (t') dt'.
\]
Lemmas \ref{lem:Ylin}, \ref{lem:Ybb'} and Propositions \ref{prop:trilinear}, \ref{prop:quintilinear} yield
\begin{align*}
& \| K_{u_0} (u) \|_{Y^{s,\frac{1}{2}+\vr}_T} \\
& \le C_0 \| u (0) \|_{H^s} + C \Big( \| \sigma_{\le T} (t) u \dx u \dx^2u \|_{Y^{s,-\frac{1}{2}+\vr}} + \| \sigma_{\le T} (t) u^2 \dx^3 u\|_{Y^{s,-\frac{1}{2}+\vr}} \\
& \quad + \| \sigma_{\le T} (t) (\dx u)^3 \|_{Y^{s,-\frac{1}{2}+\vr}} + \| \sigma_{\le T} (t) u^4 \dx u \|_{Y^{s,-\frac{1}{2}+\vr}} \Big) \\
& \le C_0 \| u_0 \|_{H^s} + C_1 T^{\frac{\vr}{2}} \Big( \| u \|_{Y^{\frac{3}{4},\frac{1}{2}+\vr}}^2 \| u \|_{Y^{s,\frac{1}{2}+\vr}} + \| u \|_{Y^{\frac{3}{4}, \frac{1}{2}+\vr}}^4 \| u \|_{Y^{s,\frac{1}{2}+\vr}} \Big).
\end{align*}

A simple calculation yields that
\[
[\Lc,\Sc ]= 5 \Lc, \quad
[\Sc,\dx]=-\dx,
\]
which imply that for solutions $u$ to \eqref{5mKdV}
\begin{equation} \label{eq:lambda}
\begin{aligned}
\Lc \Lambda u
& = \dx^{-1} (S+5) \Lc u \\
& = \alpha \left( 4 u \dx^2u \dx \Lambda u + 2 u^2 \dx^3 \Lambda u + 3 (\dx u)^2 \dx \Lambda u + 6 u \dx u \dx^2 \Lambda u \right) \\
& \qquad + 5 \beta u^4 \dx \Lambda u.
\end{aligned}
\end{equation}
Accordingly, Lemmas \ref{lem:Ylin}, \ref{lem:Ybb'} and Propositions \ref{prop:trilinear}, \ref{prop:quintilinear} yield
\begin{align*}
& \| \Lambda K_{u_0} (u) \|_{Y^{0,\frac{1}{2}+\vr}_T} \\
& \le C_0 \| \Lambda u (0) \|_{L^2} + C \Big( \| \sigma_{\le T} (t) u \dx^2u \dx \Lambda u \|_{Y^{0,-\frac{1}{2}+\vr}} + \| \sigma_{\le T} (t) u^2 \dx^3 \Lambda u \|_{Y^{0,-\frac{1}{2}+\vr}} \\
& \quad + \| \sigma_{\le T} (t) (\dx u)^2 \dx \Lambda u \|_{Y^{0,-\frac{1}{2}+\vr}} + \| \sigma_{\le T} (t) u \dx u \dx^2 \Lambda u \|_{Y^{0,-\frac{1}{2}+\vr}} \\
& \quad + \| \sigma_{\le T} (t) u^4 \dx \Lambda u \|_{Y^{0,-\frac{1}{2}+\vr}} \Big) \\
& \le C_0 \| x u_0 \|_{L^2} + C_1 T^{\frac{\vr}{2}} \Big( \| u \|_{Y^{\frac{3}{4},\frac{1}{2}+\vr}}^2 \| \Lambda u \|_{Y^{0,\frac{1}{2}+\vr}} + \| u \|_{Y^{\frac{3}{4}, \frac{1}{2}+\vr}}^4 \| \Lambda u \|_{Y^{0,\frac{1}{2}+\vr}} \Big).
\end{align*}
Hence, we have
\[
\| K_{u_0} (u) \|_{Z^{s,\frac{1}{2}+\vr}_T}
\le C_0 \| u_0\|_{H^{s,1}} + C_1T^{\frac{\vr}{2}} \left( \| u \|_{Z^{\frac{3}{4}, \frac{1}{2}+\vr}_T}^2 + \| u \|_{Z^{\frac{3}{4}, \frac{1}{2}+\vr}_T}^4 \right) \| u \|_{Z^{s, \frac{1}{2}+\vr}_T}.
\]
Let $u_1$ and $u_2$ satisfy \eqref{5mKdV} with the same initial data.
Then, a similar mummer yields
\begin{align*}
& \| K_{u_0} (u_1) - K_{u_0} (u_2) \|_{Z^{s,\frac{1}{2}+\vr}_T} \\
& \le C_1 T^{\frac{\vr}{2}} \left( \| u_1 \|_{Z^{\frac{3}{4},\frac{1}{2}+\vr}_T}^2 + \| u_2 \|_{Z^{\frac{3}{4},\frac{1}{2}+\vr}_T}^2 + \| u_1 \|_{Z^{\frac{3}{4}, \frac{1}{2}+\vr}_T}^4 + \| u_2 \|_{Z^{\frac{3}{4}, \frac{1}{2}+\vr}_T}^4 \right) \\
& \quad \times \| u_1-u_2 \|_{Z^{s,\frac{1}{2}+\vr}_T}.
\end{align*}
Hence, taking $T \in (0,1)$ with
\[
100 C_1 T^{\frac{\vr}{2}} \left\{ (C_0 \| u_0 \|_{H^{\frac{3}{4},1}})^2 + (C_0 \| u_0 \|_{H^{\frac{3}{4},1}})^4 \right\} \le 1,
\]
we obtain that the mapping $K_{u_0}$ is a contraction on the ball $B(2C_0 \| u_0 \|_{H^{s,1}}) := \{ u \in Z^{s,\frac{1}{2}+\vr}_T \colon \| u \|_{Z^{s,\frac{1}{2}+\vr}_T} \le 2C_0 \| u_0 \|_{H^{s,1}} \}$.
Accordingly, there exists a unique $u \in B (2C_0 \| u_0 \|_{H^{s,1}})$ with $u = K_{u_0} (u)$.
Because the remaining properties follow from the standard argument, we omit details here.
\end{proof}

Corollary \ref{cor:WP} follows from the same manner.

\section{Energy estimates} \label{S:energy}

We show energy estimates for solutions $u$ to \eqref{5mKdV}.
For the estimate of $\| u \|_{H^2}$, we need to add some correction term.

\begin{lem} \label{lem:energy}
Let $u$ be a solution to \eqref{5mKdV} in a time interval $[0,T]$ satisfying
\[
\| u_0 \|_{H^{2,1}} \le \eps \ll 1
\]
and \eqref{est:u_infty0}.
Then,
\[
\| u(t) \|_{X^2} \le C_1 \eps \lr{t}^{C_2 \eps},
\]
where $C_1$ and $C_2$ are constants depending only on $|\alpha|$ and $|\beta|$.
In particular, $C_1$ and $C_2$ do not depend on $D$, $T$, and $\eps$.
\end{lem}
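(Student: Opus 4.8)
The plan is to control $\|u(t)\|_{X^2}^2 = \|u(t)\|_{H^2}^2 + \|\Lambda u(t)\|_{L^2}^2$ by a Gronwall argument: one shows that a quantity $\mathcal{E}(t)$ equivalent to $\|u(t)\|_{X^2}^2$ satisfies $\frac{d}{dt}\mathcal{E}(t)\lesssim\frac{\eps}{t}\big(\mathcal{E}(t)+\eps^2\big)$ for $t\in[1,T]$, whereas on $[0,1]$ the bound $\|u(t)\|_{X^2}\lesssim\|u_0\|_{H^{2,1}}\lesssim\eps$ is already furnished by Proposition~\ref{prop:WP}. Since every constant entering the nonlinear estimates comes from \eqref{est:u_infty0}, in which $D\eps\le\eps^{1/2}$, each factor $(D\eps)^m$ with $m\ge2$ is $\lesssim\eps$, and the implicit constants depend only on $|\alpha|,|\beta|$; Gronwall's lemma together with $\mathcal{E}(1)\lesssim\|u_0\|_{H^{2,1}}^2\lesssim\eps^2$ then yields $\mathcal{E}(t)\lesssim\eps^2\lr{t}^{C\eps}$ on $[1,T]$, hence $\|u(t)\|_{X^2}\le C_1\eps\lr{t}^{C_2\eps}$.

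On the $H^2$ side, differentiating $\|u(t)\|_{H^2}^2$ and integrating by parts produces, because of the three-derivative terms $u^2\dx^3 u$, $u\dx u\dx^2 u$ in the nonlinearity, two ``top order'' contributions that $\|u\|_{H^2}$ cannot absorb: one of the form $\alpha\int u\dx u(\dx^3 u)^2\,dx$ (from the highest-derivative level) and one of the form $\alpha\int u\dx u(\dx^2 u)^2\,dx$ (from the first-derivative level). I would therefore replace $\|u\|_{H^2}^2$ by the modified energy
\[
\mathcal{E}_2(t):=\|u(t)\|_{H^2}^2+a_1\alpha\int u^2(\dx u)^2\,dx+a_2\alpha\int u^4\,dx,
\]
with numerical constants $a_1,a_2$ chosen so that the time derivatives of the two correction integrals, computed along the linear flow (i.e.\ replacing $\dt u$ by $\frac{1}{5}\dx^5 u$ and integrating by parts), reproduce exactly the two offending terms with opposite sign and cancel them. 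The corrections are genuinely of lower order: \eqref{est:u_infty0} gives $|\int u^2(\dx u)^2|+|\int u^4|\lesssim(D\eps)^2t^{-2/5}\|u\|_{H^2}^2$, so for $\eps$ small and $t\ge1$ one has $\mathcal{E}_2(t)\sim\|u(t)\|_{H^2}^2$. This step works verbatim for the general nonlinearity \eqref{5mKdVg}.

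On the $\Lambda u$ side I would use the identity \eqref{eq:lambda} for $\Lc\Lambda u$ and compute $\frac{d}{dt}\|\Lambda u\|_{L^2}^2=2\int\Lambda u\,\Lc\Lambda u\,dx$, the fifth-derivative term dropping by skew-symmetry. The only contribution not controlled by $\|u\|_{X^2}$ is one of the form $\int u\dx u(\dx\Lambda u)^2\,dx$, since $\dx\Lambda u$ is not in $L^2$ a priori; the whole point of the coefficient choice $(c_1,c_2)=\alpha(2,3)$ is that the two terms of \eqref{eq:lambda} that would generate it — those coming from $u^2\dx^3\Lambda u$ and from $u\dx u\dx^2\Lambda u$ — cancel after integration by parts. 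Every remaining term reduces, via further integration by parts, to $\int b\,(\Lambda u)^2\,dx$ with $b$ a product of derivatives of $u$ of total order four ($\dx u\dx^2 u$, $u\dx^3 u$, or $u^3\dx u$), each of which is $\lesssim(D\eps)^2t^{-1}$ in $L^\infty$ by \eqref{est:u_infty0}; hence $\frac{d}{dt}\|\Lambda u\|_{L^2}^2\lesssim(D\eps)^2t^{-1}\|\Lambda u\|_{L^2}^2$.

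It remains to bound, by $\lesssim\frac{\eps}{t}(\|u\|_{X^2}^2+\eps^2)$, all the terms surviving in $\frac{d}{dt}\mathcal{E}_2$ after the two cancellations: the lower-order commutator terms from the $H^2$ identity (schematically $\int\dx u(\dx^2 u)^3$, $\int u^2(\dx u)^3\dx^2 u$, $\int u^3\dx u(\dx^2 u)^2$, and the term $\alpha\int u(\dx u)^3\,dx$ coming from $\frac{d}{dt}\|u\|_{L^2}^2$), together with the sextic terms produced by differentiating the correction integrals against the nonlinear part of the equation. Most carry an $L^\infty$ factor already decaying like $t^{-1}$ (the combinations $\dx u\dx^2 u$, $u\dx^3 u$, $u^3\dx u$) and are immediate. \emph{The main obstacle} is the handful of terms — $\int u(\dx u)^3\,dx$ being the worst — for which a crude Hölder estimate only gives a $t^{-4/5}$ (not time-integrable) bound: here I would split $\R=\{|x|\le t\}\cup\{|x|>t\}$ and exploit the \emph{weighted} pointwise bounds in \eqref{est:u_infty0}; on the inner region the weight gain makes $\int_{|x|\le t}|u(\dx u)^3|\,dx\lesssim(D\eps)^4t^{-7/5}\int_{|x|\le t}\lr{t^{-1/5}x}^{-3/4}\,dx\lesssim(D\eps)^4t^{-1}$, while on the outer region $\lr{t^{-1/5}x}\gtrsim t^{4/5}$ upgrades the decay of $\dx u$ so that $\int_{|x|>t}|u(\dx u)^3|\,dx\le\|\dx u\|_{L^\infty(|x|>t)}^2\|u\|_{L^2}\|\dx u\|_{L^2}\lesssim(D\eps)^2t^{-1}\|u\|_{X^2}^2$. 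Checking that this type of near/far decomposition, combined with the available $L^2$ and $X^2$ norms, upgrades every such term to a $t^{-1}$ bound is the technical heart of the proof. Putting the three estimates together gives $\frac{d}{dt}\mathcal{E}(t)\lesssim\frac{C\eps}{t}\big(\mathcal{E}(t)+\eps^2\big)$ with $\mathcal{E}:=\mathcal{E}_2+\|\Lambda u\|_{L^2}^2$, and Gronwall plus the $[0,1]$ bound from Proposition~\ref{prop:WP} finishes the argument.
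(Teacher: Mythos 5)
Your overall architecture coincides with the paper's: a modified $H^2$ energy to cancel the top-order term $\alpha\int u\,\dx u\,(\dx^3u)^2\,dx$, the observation that the special coefficients $(c_1,c_2)=\alpha(2,3)$ make the $\int u\,\dx u\,(\dx\Lambda u)^2\,dx$ contributions cancel in $\frac{d}{dt}\|\Lambda u\|_{L^2}^2$ (this is exactly \eqref{energylam}), and Gronwall on $[1,T]$ with Proposition \ref{prop:WP} handling $[0,1]$. Two remarks on the $H^2$ correction. First, your second correction $a_2\alpha\int u^4\,dx$ is unnecessary: the $H^1$-level term you flag satisfies $\int u\,\dx u\,(\dx^2u)^2\,dx=-\tfrac12\int u(\dx u)^2\dx^3u\,dx$, which is $\le\tfrac12\|u\,\dx^3u\|_{L^\infty}\|\dx u\|_{L^2}^2\le\tfrac12\eps t^{-1}\|u\|_{H^1}^2$ by \eqref{est:u_infty0}; only the single correction $-12\alpha\int u^2(\dx u)^2\,dx$ is needed. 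The organizing principle you are missing is that \emph{every} surviving term can be integrated by parts until it carries one of the three pairs $u\,\dx^3u$, $\dx u\,\dx^2u$, $u^3\dx u$ as an $L^\infty$ factor (each $\le\eps t^{-1}$ since $D\eps\le\eps^{1/2}$) times an $L^2$ quantity controlled by $\|u\|_{H^2}^2$; no near/far decomposition is needed anywhere.

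This matters for your ``main obstacle'' term, and here there is a genuine quantitative gap. You have $\int u(\dx u)^3\,dx=-\int u^2\,\dx u\,\dx^2u\,dx\le\|\dx u\,\dx^2u\|_{L^\infty}\|u\|_{L^2}^2\le\eps t^{-1}\|u\|_{L^2}^2$, which is proportional to $\mathcal{E}(t)$ and closes Gronwall cleanly. Your inner-region estimate instead bounds this term by the \emph{constant} $(D\eps)^4t^{-1}$, and since the bootstrap only assumes $D\le\eps^{-1/2}$ this is $\le\eps^2t^{-1}$, not $\le\eps^3 t^{-1}$ as your claimed inequality $\frac{d}{dt}\mathcal{E}\lesssim\frac{\eps}{t}(\mathcal{E}+\eps^2)$ requires. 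With an inhomogeneity of size $\eps^2/t$, Gronwall gives $\mathcal{E}(t)\lesssim\eps^2 t^{C\eps}+\eps^2\log t\cdot t^{C\eps}$, and $\log t$ cannot be absorbed into $t^{C_2\eps}$ with $C_1,C_2$ independent of $\eps$; the best you obtain is $\|u(t)\|_{X^2}\lesssim\sqrt{\eps}\,\lr{t}^{C\eps}$, which is strictly weaker than the stated conclusion $C_1\eps\lr{t}^{C_2\eps}$ and would not feed correctly into the later bootstrap closure. The fix is the one-line integration by parts above; once you adopt it, the near/far splitting (the piece you describe as the technical heart) disappears entirely.
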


\begin{proof}
Because Proposition \ref{prop:WP} yields
\[
\sup_{0 \le t \le 1} \| u(t) \|_{X^2} \lesssim \eps,
\]
we consider the case $t \ge 1$.
By \eqref{est:u_infty0} and $D\eps \le \eps^{\frac{1}{2}}$, we have
\begin{equation} \label{Linfest}
\| u(t) \dx^3u(t) \|_{L^{\infty}}, \| \dx u(t) \dx^2 u(t) \|_{L^{\infty}}, \| u(t)^3 \dx u(t) \|_{L^{\infty}}
\le \eps t^{-1}.
\end{equation}
Integration by parts and \eqref{Linfest} yield
\begin{align*}
\frac{1}{2} \dt \| u (t) \|_{L^2}^2
&= -\int_{\R} \dx u \cdot (\alpha (2u^2 \dx^2 u + 3 u (\dx u)^2) + \beta u^5) dx
= \alpha \int_{\R} u^2 \dx u \dx^2u dx \\
&\le |\alpha| \| \dx u(t) \dx^2 u(t) \|_{L^{\infty}} \| u(t) \|_{L^2}^2
\le |\alpha| \eps t^{-1} \| u(t) \|_{L^2}^2.
\end{align*}
A similar calculation shows
\begin{align*}
\frac{1}{2} \dt \| \dx u (t) \|_{L^2}^2
&= -\int_{\R} \dx^2 u \cdot \dx (\alpha (2u^2 \dx^2 u + 3 u (\dx u)^2) + \beta u^5) dx \\
&= 4\alpha \int_{\R} u \dx^3u (\dx u)^2 dx +10 \beta \int_{\R} u^3 (\dx u)^3 dx \\
&\le \left( 4 |\alpha| \| u(t) \dx^3 u (t) \|_{L^{\infty}} + 10 |\beta| \| u (t)^3 \dx u(t) \|_{L^{\infty}} \right) \| \dx u (t) \|_{L^2}^2 \\
& \le \left( 4 |\alpha| + 10 |\beta| \right) \eps t^{-1} \| u(t) \|_{H^1}^2.
\end{align*}
Moreover, we have
\begin{align*}
\frac{1}{2} \dt \| \dx^2 u (t) \|_{L^2}^2
&= -\int_{\R} \dx^3 u \cdot \dx^2 (\alpha (2u^2 \dx^2 u + 3 u (\dx u)^2) + \beta u^5) dx \\
&= -12 \alpha \int_{\R} u \dx u (\dx ^3 u)^2 dx + \frac{67}{3} \alpha \int_{\R} \dx u (\dx^2 u)^3 dx \\
& \quad -30 \beta \int_{\R} u (\dx u)^5 dx + 50 \beta \int_{\R} u^3 \dx u (\dx^2u)^2 dx \\
& = -12 \alpha \int_{\R} u \dx u (\dx ^3 u)^2 dx + O \left( \eps t^{-1} \| u(t) \|_{H^2}^2 \right).
\end{align*}
Because integration by parts and \eqref{Linfest} imply
\begin{align*}
& \frac{1}{2} \dt \int_{\R} u(t)^2 (\dx u(t))^2 dx \\
&= -\frac{1}{5} \int_{\R} \left( u (\dx u)^2 + u^2 \dx^2 u \right) \dx^5 u dx - \int_{\R} \left( u (\dx u)^2 + u^2 \dx^2 u \right) \Lc u dx \\
&= - \int_{\R} u \dx u (\dx^3 u)^2 dx + O \left( \eps t^{-1} \| u(t) \|_{H^2}^2 \right),
\end{align*}
we obtain
\[
\frac{1}{2} \dt \left( \| \dx^2 u (t) \|_{L^2}^2 -12 \alpha \int_{\R} u(t)^2 (\dx u(t))^2 dx \right)
\le C \eps t^{-1} \| u(t) \|_{H^2}^2.
\]
By \eqref{eq:lambda}, integration by parts, and \eqref{Linfest}, we have
\begin{equation} \label{energylam}
\begin{aligned}
& \frac{1}{2} \dt \| \Lambda u(t) \|_{L^2}^2 \\
& = -\alpha \int_{\R} (2 \dx u \dx^2 u + u \dx^3 u) (\Lambda u)^2 dx -10 \beta \int_{\R} u^3 \dx u (\Lambda u)^2 dx \\
& \le C \left( \| \dx u(t) \dx^2 u(t) \|_{L^{\infty}} + \| u(t) \dx^3 u(t) \|_{L^{\infty}} + \| u(t)^3 \dx u(t) \|_{L^{\infty}} \right) \| \Lambda u(t) \|_{L^2}^2 \\
& \le C \eps t^{-1} \| \Lambda u(t) \|_{L^2}^2
\end{aligned}
\end{equation}
Because
\[
\left| \int_{\R} u(t)^2 (\dx u(t))^2 dx \right|
\le \| u(t) \|_{L^{\infty}}^2 \| \dx u(t) \|_{L^2}^2
\le \eps t^{-\frac{2}{5}} \| u(t) \|_{H^1}^2,
\]
Gronwall's inequality with above estimates implies
\[
\| u(t) \|_{X^2}
\le 10 \| u(1) \|_{X^2} t^{C \eps}
\lesssim \eps t^{C \eps}.
\]
\end{proof}

For $\alpha=0$, we use the Kato-Ponce commutator estimate (see \cite{KatPon88}, \cite{KPV93}).

\begin{lem} \label{lem:KP}
For $0<s<1$, we have
\[
\| |\dx|^s (fg) - f |\dx|^s g \|_{L^2}
\lesssim \| |\dx|^s f \|_{L^2} \| g \|_{L^{\infty}}.
\]
\end{lem}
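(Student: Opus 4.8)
The plan is to reduce the estimate to a bilinear Fourier-multiplier bound and then run a Littlewood--Paley argument; by density it suffices to treat $f,g\in\mathcal{S}(\R)$. On the Fourier side the operator $(f,g)\mapsto|\dx|^s(fg)-f|\dx|^sg$ is the bilinear multiplier with symbol $m(\xi,\eta):=|\xi+\eta|^s-|\eta|^s$, where $\xi$ is dual to $f$ and $\eta$ to $g$. Since $0<s<1$, subadditivity of $t\mapsto t^s$ on $[0,\infty)$ gives $|a^s-b^s|\le|a-b|^s$ for $a,b\ge0$, so that $|m(\xi,\eta)|\le\big||\xi+\eta|-|\eta|\big|^s\le|\xi|^s$. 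Writing $m=|\xi|^s\,\wt{m}$ with the bounded symbol $\wt{m}(\xi,\eta)=\big(|\xi+\eta|^s-|\eta|^s\big)/|\xi|^s$, the lemma becomes the inequality $\|\wt{T}(F,g)\|_{L^2}\lesssim\|F\|_{L^2}\|g\|_{L^\infty}$ for the bilinear operator $\wt{T}$ with symbol $\wt{m}$, applied with $F=|\dx|^sf$; both sides being homogeneous of the same degree under $(f,g)\mapsto(f(\lambda\,\cdot),g(\lambda\,\cdot))$, no frequency scale is distinguished.

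Put $B(\phi,\psi):=|\dx|^s(\phi\psi)-\phi|\dx|^s\psi$ and decompose $f=\sum_j\Delta_jf$, $g=\sum_k\Delta_kg$ into Littlewood--Paley pieces, so that the commutator is $\sum_{j,k}B(\Delta_jf,\Delta_kg)$; split this into the regions (i) $2^j\ll2^k$, (ii) $2^j\gg2^k$, (iii) $2^j\sim2^k$. In (ii) the low-frequency factor may be resummed into $g_{<j}:=\sum_{k<j-2}\Delta_kg$, which satisfies $\|g_{<j}\|_{L^\infty}\lesssim\|g\|_{L^\infty}$; the terms $B(\Delta_jf,g_{<j})$ are frequency localized at $\sim2^j$, the two bounds $\big\||\dx|^s(\Delta_jf\,g_{<j})\big\|_{L^2}\lesssim2^{js}\|\Delta_jf\|_{L^2}\|g\|_{L^\infty}$ and $\big\|\Delta_jf\,|\dx|^sg_{<j}\big\|_{L^2}\lesssim2^{js}\|\Delta_jf\|_{L^2}\|g\|_{L^\infty}$ hold (the second because $\||\dx|^sg_{<j}\|_{L^\infty}\lesssim2^{js}\|g\|_{L^\infty}$), and summing in $j$ by $L^2$ almost-orthogonality gives $\lesssim\|g\|_{L^\infty}\big(\sum_j 2^{2js}\|\Delta_jf\|_{L^2}^2\big)^{1/2}\lesssim\|g\|_{L^\infty}\||\dx|^sf\|_{L^2}$. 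In (i) one keeps the dyadic pieces and uses the mean value theorem, which on this region sharpens the symbol bound to $|m(\xi,\eta)|\lesssim|\xi|\,|\eta|^{s-1}=2^j\,2^{k(s-1)}$; hence $\|B(\Delta_jf,\Delta_kg)\|_{L^2}\lesssim 2^j2^{k(s-1)}\|\Delta_jf\|_{L^2}\|g\|_{L^\infty}=2^{-(k-j)(1-s)}\big(2^{js}\|\Delta_jf\|_{L^2}\big)\|g\|_{L^\infty}$, and as the output lies at frequency $\sim2^k$, summing in $j$ by the discrete Young inequality (the kernel $2^{-(k-j)(1-s)}$, $j<k$, lies in $\ell^1$ because $1-s>0$) and then in $k$ by orthogonality again yields $\lesssim\|g\|_{L^\infty}\||\dx|^sf\|_{L^2}$.

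I expect region (iii) to be the main obstacle. There the symbol is not small; writing $|\dx|^s\Delta_kg=2^{ks}G_k$ with $G_k$ at frequency $\sim2^k$ and $\|G_k\|_{L^\infty}\lesssim\|g\|_{L^\infty}$, the companion term $\sum_k|\dx|^s(\Delta_kf\,\Delta_kg)$ is harmless because there $|\dx|^s$ acts on the low-frequency output $2^l\le2^k$ and supplies a factor $2^{(l-k)s}$, but the term $\sum_k\Delta_kf\,|\dx|^s\Delta_kg=\sum_k2^{ks}\Delta_kf\,G_k$ is genuinely resonant: the product can deposit an $O(1)$-proportion of its mass at arbitrarily low output frequencies $2^l$, $l\le k$, where the only obvious bound $\|\Delta_l(\Delta_kf\,G_k)\|_{L^2}\lesssim\|\Delta_kf\|_{L^2}\|g\|_{L^\infty}$ carries no gain in $k-l$ and is only $\ell^1$- rather than $\ell^2$-summable in $k$. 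What rescues the estimate is that $g\in L^\infty$ forces such extremal configurations to be essentially disjointly supported in physical space, which restores the missing orthogonality; this is precisely the commutator estimate of Kato--Ponce \cite{KatPon88} and Kenig--Ponce--Vega \cite{KPV93}, equivalently the Coifman--Meyer multilinear multiplier theorem applied to $\wt{m}$ once the locus $\xi+\eta=0$ is handled separately, and at this step I would follow their argument, whose analytic core is a weighted Hardy-type inequality of the form $\sum_m 2^{-m}\big(\sum_{k\le m}c_k\big)^2\lesssim\sum_k 2^{-k}c_k^2$. An equivalent starting point is the hypersingular-integral formula $|\dx|^sh(x)=c_s\int_{\R}\frac{h(x)-h(x+z)}{|z|^{1+s}}\,dz$, which yields the clean identity $|\dx|^s(fg)(x)-f(x)|\dx|^sg(x)=c_s\int_{\R}\frac{(f(x)-f(x+z))\,g(x+z)}{|z|^{1+s}}\,dz$; but reading off the $L^2$ bound from it still requires the same care in the comparable-frequency regime.
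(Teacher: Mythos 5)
The paper does not prove this lemma at all: it is quoted verbatim from Kato--Ponce \cite{KatPon88} and Kenig--Ponce--Vega \cite{KPV93}, so there is no in-paper argument to compare against. Your outline is a reasonable reconstruction of how that classical proof goes, and the parts you carry out are correct: the symbol identification $m(\xi,\eta)=|\xi+\eta|^s-|\eta|^s$, the subadditivity bound $|m|\le|\xi|^s$, the high-low regime (where each of the two terms is estimated separately and almost-orthogonality in $j$ closes the sum), and the low-high regime (where the mean value theorem gives the gain $2^{-(k-j)(1-s)}$ and discrete Young's inequality applies); the only implicit step there is converting the pointwise symbol bound on a dyadic block into the $L^2\times L^\infty\to L^2$ operator bound, which requires the standard Fourier-series/Coifman--Meyer reduction for the localized symbol, but that is routine. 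More importantly, you correctly identify the resonant regime $2^j\sim 2^k$ as the genuine obstruction --- the crude bound $\|\Delta_l(\Delta_k f\,|\dx|^s\Delta_k g)\|_{L^2}\lesssim 2^{ks}\|\Delta_k f\|_{L^2}\|g\|_{L^\infty}$ is indeed only $\ell^1$-summable in $k$ for each output frequency, and neither the discrete Hardy inequality nor the hypersingular-integral identity (whose naive Minkowski estimate only controls $f$ in $\dot B^s_{2,1}$ rather than $\dot H^s$) closes it --- and at that point you defer to the Kato--Ponce/Coifman--Meyer machinery. Since the paper itself defers the entire lemma to those same references, your proposal is consistent with (and more informative than) the paper's treatment; it is not a self-contained proof, but the one piece you outsource is exactly the piece for which the citation is standard.
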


\begin{lem} \label{lem:energy2}
Let $0<s<1$ and let $u$ be a solution to \eqref{5mKdV5} in a time interval $[0,T]$ satisfying
\[
\| u_0 \|_{H^{s,1}} \le \eps \ll 1
\]
and \eqref{est:u_infty0}.
Then,
\[
\| u(t) \|_{X^s} \le C_1 \eps \lr{t}^{C_2 \eps},
\]
where $C_1$ and $C_2$ are constants depending only on $|\alpha|$, $|\beta|$, and $s$.
\end{lem}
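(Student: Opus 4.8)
The plan is to run the same $L^2$-based energy scheme as in Lemma~\ref{lem:energy}, but now directly on $\||\partial_x|^s u\|_{L^2}$ and on $\|\Lambda u\|_{L^2}$, exploiting two simplifications available when $\alpha=0$: the pure power nonlinearity $\beta\partial_x(u^5)$ integrates by parts without any loss of derivatives, so no correction term is needed; and the only place a fractional derivative meets the nonlinearity is a commutator, which is absorbed using the Kato--Ponce estimate of Lemma~\ref{lem:KP} (this is where the restriction $0<s<1$ enters, harmless since $s=\tfrac25$ in Theorem~\ref{thm2}).

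First I would dispose of $t\le 1$ by Corollary~\ref{cor:WP}, which gives $\sup_{0\le t\le 1}\|u(t)\|_{X^s}\lesssim\eps$, so that the task reduces to closing a Gronwall inequality on $[1,T]$. Throughout I would use, as in \eqref{Linfest}, that the bootstrap bound \eqref{est:u_infty0} together with $D\eps\le\eps^{1/2}$ yields $\|u(t)^3\partial_x u(t)\|_{L^\infty}\le\eps t^{-1}$ and $\|u(t)\|_{L^\infty}\lesssim D\eps t^{-1/5}$.

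Then I would compute the three energy identities. For $L^2$, $\tfrac12\tfrac{d}{dt}\|u\|_{L^2}^2=\tfrac{5\beta}{6}\int_{\R}\partial_x(u^6)\,dx=0$, so $\|u(t)\|_{L^2}=\|u_0\|_{L^2}$. For the homogeneous piece, the linear contribution to $\tfrac{d}{dt}\||\partial_x|^s u\|_{L^2}^2$ vanishes by skew-adjointness of $\partial_x^5$; writing $\partial_x(u^5)=5u^4\partial_x u$ and splitting $|\partial_x|^s(u^4\partial_x u)=u^4\,\partial_x|\partial_x|^s u+R$ via Lemma~\ref{lem:KP}, with $\|R\|_{L^2}\lesssim\||\partial_x|^s(u^4)\|_{L^2}\|\partial_x u\|_{L^\infty}\lesssim\|u\|_{L^\infty}^3\|\partial_x u\|_{L^\infty}\||\partial_x|^s u\|_{L^2}$ (the last step by iterating the fractional Leibniz rule that follows from Lemma~\ref{lem:KP}), the main term integrates by parts to $-10\beta\int_{\R} u^3\partial_x u\,(|\partial_x|^s u)^2\,dx$, and hence $\big|\tfrac{d}{dt}\||\partial_x|^s u\|_{L^2}^2\big|\lesssim\eps t^{-1}\||\partial_x|^s u\|_{L^2}^2$. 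For $\Lambda u$, the identity \eqref{eq:lambda} with $\alpha=0$ reduces to $\Lc\Lambda u=5\beta u^4\partial_x\Lambda u$; again the linear term drops and one integration by parts gives $\tfrac12\tfrac{d}{dt}\|\Lambda u\|_{L^2}^2=-10\beta\int_{\R} u^3\partial_x u\,(\Lambda u)^2\,dx$, so $\big|\tfrac{d}{dt}\|\Lambda u\|_{L^2}^2\big|\lesssim\eps t^{-1}\|\Lambda u\|_{L^2}^2$. Adding these and applying Gronwall's inequality from $t=1$, where $\|u(1)\|_{H^s}+\|\Lambda u(1)\|_{L^2}\lesssim\eps$ by Corollary~\ref{cor:WP}, produces $\|u(t)\|_{X^s}\lesssim\eps t^{C\eps}\le C_1\eps\lr{t}^{C_2\eps}$.

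Since taking $\alpha=0$ has erased precisely the three-derivative terms (such as $u\partial_x u(\partial_x\Lambda u)^2$) that obstruct the naive energy estimate in the general case, there is no serious analytic obstacle here; the only point requiring care is that the identities above are formal at regularity $s<1$ (the term $\partial_x^5 u$ is not a priori defined), and must be justified in the usual way by running the estimates on smooth solutions, or on a mollification of \eqref{5mKdV5}, and passing to the limit with the local theory of Section~\ref{wp}.
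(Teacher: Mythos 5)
Your proposal is correct and follows essentially the same route as the paper: $L^2$-conservation, the $\Lambda u$ energy identity from \eqref{eq:lambda} with $\alpha=0$ (which kills the derivative-loss terms), and the Kato--Ponce commutator of Lemma \ref{lem:KP} to reduce $\tfrac{d}{dt}\||\partial_x|^s u\|_{L^2}^2$ to $-10\beta\int u^3\partial_x u(|\partial_x|^s u)^2\,dx$ plus an acceptable error, followed by Gronwall. The extra details you supply (iterating the fractional Leibniz rule for $\||\partial_x|^s(u^4)\|_{L^2}$ and the remark on justifying the identities at low regularity) are correct and consistent with what the paper leaves implicit.
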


\begin{proof}
As in the proof of Lemma \ref{lem:energy}, we have
\[
\| u(t) \|_{L^2} = \| u_0 \|_{L^2} \le \eps, \quad \| \Lambda u(t) \|_{L^2} \le C_1 \eps \lr{t}^{C_2 \eps}.
\]
It remains to estimate $\| u(t) \|_{\dot{H}^s}$.
Lemma \ref{lem:KP} and \eqref{est:u_infty0} yield
\begin{align*}
\frac{1}{2} \dt \| u(t) \|_{\dot{H}^s}^2
&= 5\beta \int_{\R} |\dx|^s u \cdot |\dx|^s (u^4 \dx u) dx \\
&= 5\beta \int_{\R} |\dx|^s u \cdot u^4 |\dx|^s \dx u dx + O \left( \| u (t) \|_{\dot{H}^s}^2 \| u(t) \|_{L^{\infty}}^3 \| \dx u (t) \|_{L^{\infty}} \right) \\
&= -10 \beta \int_{\R} u^3 \dx u (|\dx|^s u)^2 dx + O \left( \eps t^{-1} \| u(t) \|_{\dot{H}^s}^2 \right) \\
& \lesssim \eps t^{-1} \| u(t) \|_{\dot{H}^s}^2.
\end{align*}
By using Gronwall's inequality, we obtain the desired bound.
\end{proof}

We define the auxiliary space
\[
\| u(t) \|_{\wt{X}} := \| \J u(t) \|_{L^2} + t^{\frac{1}{5}} \| \lr{t^{\frac{1}{5}} \dx}^{-1} u(t) \|_{L^2}.
\]

\begin{lem} \label{lem:Xtilde}
Let $u$ be a solution to \eqref{5mKdV} satisfying $\| u_0 \|_{H^{0,1}} \le \eps \ll 1$ and \eqref{est:u_infty0}.
Then, for $t \ge 1$, we have
\[
\| u(t) \|_{\wt{X}} \lesssim \eps t^{\frac{1}{10}},
\]
where the implicit constant depends only on $|\alpha|$ and $|\beta|$.
\end{lem}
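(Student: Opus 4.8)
The plan is to bound the two pieces of $\|u(t)\|_{\wt{X}}$ separately, the weighted piece $\|\J u\|_{L^2}$ by comparison with $\Lambda u$, and the frequency piece $t^{1/5}\|\lr{t^{1/5}\dx}^{-1}u\|_{L^2}$ by an energy (ODE‑in‑$t$) argument on a Fourier‑side quantity. For the first piece I would start from the identity coming from \eqref{eq:deflambda}: since $\dx^{-1}\Lc u = \alpha(2u^2\dx^2 u + 3u(\dx u)^2) + \beta u^5 =: \mathcal{N}$, one has $\J u = \Lambda u - 5t\mathcal{N}$. The local theory gives $\|u(1)\|_{X^0}\lesssim\eps$, and the $L^2$‑ and $\Lambda$‑parts of the energy estimate (the computations behind \eqref{energylam}, which use only the $L^\infty$‑bounds furnished by \eqref{est:u_infty0}, not the $H^2$‑regularity of Lemma \ref{lem:energy}) together with Gronwall give $\|\Lambda u(t)\|_{L^2}\lesssim\eps t^{C\eps}$ with $C$ depending only on $|\alpha|,|\beta|$; after enlarging the constant we may take $C\eps\le\frac{1}{10}$. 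On the other hand \eqref{est:u_infty0} forces each summand of $\mathcal{N}$ to satisfy a pointwise bound $\lesssim (D\eps)^3 t^{-1}\lr{t^{-1/5}x}^{-5/8}$ (with faster spatial decay for the quintic term), so after the substitution $y=t^{-1/5}x$ and using $\int_{\R}\lr{y}^{-5/4}\,dy<\infty$ one obtains $\|\mathcal{N}(t)\|_{L^2}\lesssim (D\eps)^3 t^{-9/10}$. Hence $\|\J u(t)\|_{L^2}\le\|\Lambda u(t)\|_{L^2}+5t\|\mathcal{N}(t)\|_{L^2}\lesssim\eps t^{\frac1{10}}$, where I used $D\eps\le\eps^{1/2}$ so that $(D\eps)^3\le\eps$.

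\textbf{The frequency piece.} Set $m_t(\xi):=t^{1/5}\lr{t^{1/5}\xi}^{-1}$ and $E(t):=\|m_t(\dx)u(t)\|_{L^2}^2 = t^{2/5}\|\lr{t^{1/5}\dx}^{-1}u(t)\|_{L^2}^2$, so the goal is $E(t)\lesssim\eps^2 t^{1/5}$. Since $\wh{u}$ satisfies $\dt\wh{u}=\frac{i}{5}\xi^5\wh{u}+i\xi\wh{\mathcal{N}}$ and the dispersive term is skew‑adjoint,
\[
\dt E = \int_{\R} 2 m_t\dot m_t\,|\wh{u}|^2\,d\xi + 2\,\Re\int_{\R} m_t^2\,\ol{\wh{u}}\,i\xi\wh{\mathcal{N}}\,d\xi .
\]
A direct computation gives $\dot m_t = \frac15 t^{-4/5}\lr{t^{1/5}\xi}^{-3}\ge 0$, hence $2m_t\dot m_t = \frac25 t^{-3/5}\lr{t^{1/5}\xi}^{-4}$. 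For the first integral I rescale $\eta=t^{1/5}\xi$ and introduce the rescaled profile $\wh{F}(\eta):=\wh{v}(t,t^{-1/5}\eta)$, where $v:=e^{-\frac15 t\dx^5}u$. Mass is conserved, $\int_{\R}u(t)\,dx=\int_{\R}u_0\,dx$, so $\wh{F}(0)=\wh{v}(t,0)=\frac1{\sqrt{2\pi}}\int u_0$, giving $|\wh{F}(0)|\lesssim\|u_0\|_{H^{0,1}}\lesssim\eps$; and since $\J u = e^{\frac15 t\dx^5}(xv)$ we get $\|\partial_\eta\wh{F}\|_{L^2}=t^{-1/10}\|\partial_\xi\wh{v}\|_{L^2}=t^{-1/10}\|\J u\|_{L^2}\lesssim\eps$ from the first part. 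Therefore $|\wh{F}(\eta)|\lesssim\eps(1+|\eta|^{1/2})$, and since $\lr{\eta}^{-4}(1+|\eta|)\in L^1$,
\[
\Big|\int_{\R}2m_t\dot m_t\,|\wh{u}|^2\,d\xi\Big| = \frac25 t^{-4/5}\int_{\R}\lr{\eta}^{-4}|\wh{F}(\eta)|^2\,d\eta \lesssim \eps^2 t^{-4/5}.
\]
For the second integral, $|m_t\xi|\le 1$ pointwise, so it is $\le 2\|m_t\wh{u}\|_{L^2}\|m_t\xi\wh{\mathcal{N}}\|_{L^2}\le 2E(t)^{1/2}\|\mathcal{N}(t)\|_{L^2}\lesssim E(t)^{1/2}(D\eps)^3 t^{-9/10}$. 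Thus $\dt E\lesssim \eps^2 t^{-4/5}+E^{1/2}(D\eps)^3 t^{-9/10}$. Since $E(1)=\|\lr{\dx}^{-1}u(1)\|_{L^2}^2\le\|u(1)\|_{L^2}^2\lesssim\eps^2$, a continuity/bootstrap argument — assume $E(s)\le M\eps^2 s^{1/5}$ on $[1,t]$, plug into the differential inequality, use $\int_1^t s^{-4/5}\,ds\le 5t^{1/5}$ and $(D\eps)^3\eps = D^3\eps^4\le\eps^{5/2}$ to make the nonlinear term small, and choose $M$ appropriately — recovers $E(t)\le M\eps^2 t^{1/5}$. Hence $t^{1/5}\|\lr{t^{1/5}\dx}^{-1}u(t)\|_{L^2}=E(t)^{1/2}\lesssim\eps t^{1/10}$, which combined with the bound on $\|\J u(t)\|_{L^2}$ proves the lemma.

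\textbf{Main obstacle.} The delicate point is the first integral in $\dt E$, the contribution of $\dot m_t$. The naive bound $\int 2m_t\dot m_t|\wh{u}|^2\lesssim t^{-1}E$ (from $\lr{t^{1/5}\xi}^{-4}\le t^{-2/5}m_t^2$) would only give $E(t)\lesssim\eps^2 t^{1+C\eps}$ — far from the required $\eps^2 t^{1/5}$ — reflecting the fact that $m_t\sim t^{1/5}$ at low frequencies grows in $t$. What rescues the estimate is that $2m_t\dot m_t=\frac25 t^{-3/5}\lr{t^{1/5}\xi}^{-4}$ has enough decay in $\xi$ to absorb the $|\eta|^{1/2}$–growth of the rescaled profile $\wh{F}$ inherited from the $\J$–bound, so that only the $\J$–bound and mass conservation enter here (the lossy $\|u(t)\|_{L^2}\lesssim\eps t^{C\eps}$ is never used for this term). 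A secondary point to verify is the bookkeeping in \eqref{energylam}: it is precisely the choice $(c_1,c_2)=\alpha(2,3)$ that makes $\int u\dx u(\dx\Lambda u)^2$ cancel in $\dt\|\Lambda u\|_{L^2}^2$, which is what keeps $\|\Lambda u(t)\|_{L^2}$, and therefore $\|\J u(t)\|_{L^2}$, under control.
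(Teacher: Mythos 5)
Your proposal is correct, and it splits naturally into a half that coincides with the paper and a half that does not. The bound on $\| \J u(t) \|_{L^2}$ is exactly the paper's argument: write $\J u = \Lambda u - 5t\,\dx^{-1}\Lc u$, get $\| \Lambda u(t) \|_{L^2} \le C_1 \eps t^{C_2\eps}$ from \eqref{energylam} and Gronwall (and here your remark about the special structure $(c_1,c_2)=\alpha(2,3)$ killing $\int u \dx u (\dx \Lambda u)^2\,dx$ is precisely the point the paper makes), and bound the nonlinearity in $L^2$ by $\eps t^{-9/10}$ using the pointwise bootstrap bounds. For the low-frequency piece you take a genuinely different route. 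The paper passes to the self-similar variable $U(t,y) = t^{1/5} u(t,t^{1/5}y)$ and uses the exact identity $\partial_t U = \tfrac{1}{5} t^{-1} \dy\big( (\Lambda u)(t,t^{1/5}y)\big)$ from \eqref{eq:U}; since $\lr{\dy}^{-1}\dy$ is bounded on $L^2$, this gives $\partial_t \| \lr{\dy}^{-1} U \|_{L^2} \lesssim t^{-11/10}\| \Lambda u \|_{L^2} \lesssim \eps t^{-11/10+C_2\eps}$, which is integrable in $t$, so a single integration closes the estimate with no bootstrap, no mass conservation, and no pointwise information about $\wh{u}$. Your $E(t) = \| m_t(\dx) u \|_{L^2}^2$ is the same quantity in disguise (indeed $E(t) = t^{1/5}\| \lr{\dy}^{-1} U \|_{L^2_y}^2$), but you compute $\dt E$ directly, which forces you to handle the $\dot m_t$ term separately; your treatment of it — mass conservation to pin down $\wh{u}(t,0)$, plus the H\"older-$\tfrac12$ bound $|\wh{F}(\eta)| \lesssim \eps(1+|\eta|^{1/2})$ extracted from the $\J$-estimate, against the integrable weight $\lr{\eta}^{-4}$ — is correct and the bootstrap closes with $(D\eps)^3 \le \eps^{3/2}$. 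The cost of your version is two extra (true) inputs and a continuity argument; what it buys is a transparent identification of exactly which term produces the $t^{1/10}$ growth and a proof that never differentiates the rescaled profile in $y$. Both are valid; the paper's is shorter because the single operator $\Lambda$ already packages the dispersive, scaling, and nonlinear contributions that you estimate one by one.
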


\begin{proof}
We note that \eqref{eq:deflambda} implies
\[
\J u
= \Lambda u - 5t \dx^{-1} \Lc u
= \Lambda u - 5t \left( \alpha (2 u^2 \dx^2 u + 3 u (\dx u)^2) + \beta u^5 \right).
\]
Because \eqref{est:u_infty0} yields
\[
|u (t,x)^2 \dx^2 u (t,x)|, |u(t,x) (\dx u (t,x))^2|, |u(t,x)|^5
\le \eps t^{-1} \lr{t^{-\frac{1}{5}} x}^{-\frac{5}{8}},
\]
we have
\begin{align*}
& \|u^2 \dx^2 u\|_{L^2} +  \|u (\dx u)^2\|_{L^2} + \| u^5 \|_{L^2} \\
& \lesssim \eps t^{-1} \left( \int_{|x| \le t^{\frac{1}{5}}} dx + \int_{|x| \ge t^{\frac{1}{5}}} \left( t^{-\frac{1}{5}} |x| \right)^{-\frac{5}{4}} dx \right)^{\frac{1}{2}} \\
& \lesssim \eps t^{-1+\frac{1}{10}}.
\end{align*}
We apply Gronwall's inequality with \eqref{energylam} to obtain $\| \Lambda u(t) \|_{L^2} \le C_1 \eps t^{C_2 \eps}$.
We therefore have
\begin{equation} \label{est:JuL2}
\| \J u(t) \|_{L^2}
\lesssim \| \Lambda u(t) \|_{L^2} + \|u^2 \dx^2 u\|_{L^2} +  \|u (\dx u)^2\|_{L^2} + \| u^5 \|_{L^2}
\lesssim \eps t^{\frac{1}{10}}.
\end{equation}

We use a self-similar change of variables by defining
\begin{equation} \label{selfsimilar}
U(t,y) := t^{\frac{1}{5}} u(t,t^{\frac{1}{5}}y).
\end{equation}
From $\Lambda u = 5t \dx^{-1} \dt u + x u$, a direct calculation shows
\begin{equation} \label{eq:U}
\partial_t U(t,y)
= \frac{1}{5} t^{-1} \dy \left( (\Lambda u) (t,t^{\frac{1}{5}}y) \right)
\end{equation}
Hence, we have
\[
\partial_t \| \lr{\dy}^{-1} U(t) \|_{L^2_y}
\lesssim t^{-\frac{11}{10}} \| \Lambda u (t) \|_{L^2_x}
\lesssim \eps t^{-\frac{11}{10}+C_2 \eps} .
\]
Integrating this with respect to $t$, we have
\[
\| \lr{\dy}^{-1} U (t) \|_{L^2_y} \lesssim \eps .
\]
From $\| \lr{\dy}^{-1} U (t) \|_{L^2_y} = t^{\frac{1}{10}} \| \lr{t^{\frac{1}{5}} \dx}^{-1} u (t) \|_{L^2_x}$, we obtain the desired bound.
\end{proof}

\begin{rmk} \label{rmk:reqH2}
The estimate $\| u(t) \|_{\wt{X}} \lesssim \eps$ for $0<t<1$ holds true if $u$ has regularity.
Indeed, as in \eqref{est:JuL2},
\begin{align*}
\| \J u(t) \|_{L^2}
& \lesssim \| \Lambda u(t) \|_{L^2} + \| u(t) \|_{L^{\infty}}^2 \| u(t) \|_{H^2} + \| u(t) \|_{L^6} \| \dx u(t) \|_{L^6}^2 + \| u(t) \|_{L^{10}}^5 \\
& \lesssim \| \Lambda u(t) \|_{L^2} + \| u(t) \|_{H^2}^3 + \| u(t) \|_{H^1}^5.
\end{align*}
Accordingly, by Proposition \ref{prop:WP} and taking $\eps>0$ sufficiently small, we have
\[
\sup_{0 \le t \le 1} \| u(t) \|_{\wt{X}}
\lesssim \sup_{0 \le t \le 1} \left( \| u(t) \|_{X^2} + \| u(t) \|_{X^2}^5 \right)
\lesssim \eps.
\]
For $\alpha=0$, because the cubic part vanishes, by $H^{\frac{2}{5}} (\R) \hookrightarrow L^{10}(\R)$, we have
\[
\sup_{0 \le t \le 1} \| u(t) \|_{\wt{X}}
\lesssim \sup_{0 \le t \le 1} \left( \| u(t) \|_{X^{\frac{2}{5}}} + \| u(t) \|_{X^{\frac{2}{5}}}^5 \right)
\lesssim \eps.
\]
\end{rmk}

\section{Decay estimates} \label{S:KS_type}

We decompose $u$ into positive and negative frequencies:
\[
u = u^+ + u^-, \quad u^{\pm} := P^{\pm} u.
\]
Because $u$ is real valued, $u^+ = \overline{u^-}$ and $u= 2 \Re u^+$.
We write $u_N := P_N u$ and $u_N^+ := P_N^+ u$.
From $\J u_N = P_N (\J u) +i N^{-1}\mathcal{F}^{-1} [ \sigma'(\frac{\xi}{N}) \wh{u}]$, we have
\[
\| u (t) \| _{\wt{X}} \sim \Bigg( \|u_{\le t^{-\frac{1}{5}}} (t) \| _{\wt{X}}^2 + \sum_{\substack{N \in 2^{\delta \mathbb{Z}} \\ N \ge t^{-\frac{1}{5}}}} \|u_N (t) \| _{\wt{X}}^2 \Bigg)^{\frac{1}{2}}.
\]

For $t \ge 1$, we further decompose $u^+$ into the hyperbolic and elliptic parts
\[
\uhp = \sum _{\substack{N \in 2^{\delta \mathbb{Z}} \\ N \ge t^{-\frac{1}{5}}}} \uhp_{N}, \quad
\uep = u^+ - \uhp,
\]
where, for $N \ge t^{-\frac{1}{5}}$, we define
\[
\uhp_{N} := \sigma_N^{\rm{hyp}} u^{+}_{N}, \quad
\uep_{N} := u^+_{N} - \uhp_{N}.
\]
Here, $\sigma_N^{\rm{hyp}} (t,x) := \sigma_{\frac{1}{3} t N^4 \le \cdot \le 3 t N^4} (x) \bm{1}_{\R_-} (x)$.

We note that $\uhp$ is supported in $\{ x \in \R_- \colon t^{-\frac{1}{5}} |x| \ge \frac{1}{3 \cdot 2^{\delta}} \}$.
For $(t,x) \in \R^2$ with $t^{-\frac{1}{5}} |x| \ge \frac{1}{3 \cdot 2^{\delta}}$, the number of scaled dyadic numbers $N \in 2^{\delta \mathbb{Z}}$ satisfying $\frac{1}{3 \cdot 2^{\delta}} tN^4 \le |x| \le 3 \cdot 2^{\delta} t N^4$ is  less than $\frac{1}{\delta}$.
Hence, $\uhp (t,x)$ is a finite sum of $\uhp_N(t,x)$'s.

The functions $\uh_N$ and $\ue_N$ are essentially frequency localized near $N$, which is a consequence of the following.

\begin{lem} \label{lem:freq_spat_loc}
For $2 \le p \le \infty$, any $a ,\, b ,\, c \in \R$ with $a \ge 0$ and $a+c \ge 0$, and any $R>0$, we have
\[
\| (1- P_{\frac{N}{2^{\delta}} \le \cdot \le 2^{\delta} N}) | \dx| ^{a} ( |x|^{b} \sigma_R P_N f) \|_{L^p}
\lesssim_{a,b,c} N^{-c+\frac{1}{2}-\frac{1}{p}} R^{-a+b-c}  \| P_N f \|_{L^2} . 
\]
Moreover, we may replace $\sigma_R$ on the left hand side by $\sigma_{>R}$ if $a+c>b+1$ and $\sigma_{<R}$ if $a+c \ge 0$ and $b=0$.

In addition, for any $0<r<R$, we have
\begin{align*}
& \| (1- P_{\frac{N}{2^{\delta}} \le \cdot \le 2^{\delta} N}) | \dx| ^{a} ( |x|^{b} \sigma_{r< \cdot <R} P_N f) \|_{L^2} \\
& \quad \lesssim_{a,b,c} N^{-c} R^{-a+b-c} \left( \frac{R}{r} \right)^{a+|b|+c+2} \| P_N f \|_{L^2} . 
\end{align*}
\end{lem}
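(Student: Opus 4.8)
The plan is to pass to the Fourier side and reduce the bound to a kernel estimate handled by Schur's test together with the Hausdorff--Young inequality. Writing $|x|^{b}\sigma_R(x)=R^{b}m_{1}(x/R)$ for a fixed $m_{1}\in C_0^{\infty}(\R)$ supported away from the origin, we have $\mathcal{F}[|x|^{b}\sigma_R](\eta)=R^{b+1}\widehat{m_{1}}(R\eta)$ with $\widehat{m_{1}}$ Schwartz, so $|\mathcal{F}[|x|^{b}\sigma_R](\eta)|\lesssim_{M}R^{b+1}(1+R|\eta|)^{-M}$ for every real $M\ge 0$. Let $\mu(\xi):=|\xi|^{a}\bigl(1-\sigma_{\frac{N}{2^{\delta}}\le\cdot\le 2^{\delta}N}(\xi)\bigr)$ be the symbol of $|\dx|^{a}(1-P_{\frac{N}{2^{\delta}}\le\cdot\le 2^{\delta}N})$; note that $\mu$ is supported in $\{|\xi|\le N/2^{\delta}\}\cup\{|\xi|\ge 2^{\delta}N\}$ and, being assembled from the $C_0^{\infty}$ cutoff $\sigma$, it vanishes to infinite order on $A_{N}:=\supp\sigma_{N}\subset\{N/2^{\delta}\le|\xi|\le 2^{\delta}N\}$. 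Since $\widehat{P_{N}f}$ is supported in $A_{N}$,
\[
\mathcal{F}\!\Bigl[(1-P_{\frac{N}{2^{\delta}}\le\cdot\le 2^{\delta}N})|\dx|^{a}\bigl(|x|^{b}\sigma_R P_N f\bigr)\Bigr](\xi)
=\frac{1}{\sqrt{2\pi}}\,\mu(\xi)\int_{A_{N}}\mathcal{F}[|x|^{b}\sigma_R](\xi-\xi')\,\widehat{P_{N}f}(\xi')\,d\xi',
\]
so the operator in question, restricted to inputs with Fourier support in $A_{N}$, has kernel $k(\xi,\xi')=\mu(\xi)\mathcal{F}[|x|^{b}\sigma_R](\xi-\xi')\bm{1}_{A_{N}}(\xi')$.

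I would then bound $\|k\|_{L^{2}\to L^{2}}$ by Schur's test and recover the $L^{p}$ norm of the output by Hausdorff--Young followed by H\"older on the set $\{\mu\ne0\}\cap\{|\xi|\lesssim N\}$, which has measure $\sim N$ and produces the Bernstein-type factor $N^{\frac12-\frac1p}$ (the high-frequency part $|\xi|\gg N$ is controlled similarly). The kernel estimate itself uses the two available sources of smallness: for $\xi$ in the ``bulk'' of $\{\mu\ne0\}$ one has $|\xi-\xi'|\gtrsim N$ for $\xi'\in A_{N}$, so $|\mathcal{F}[|x|^{b}\sigma_R](\xi-\xi')|\lesssim_{M}R^{b+1}(1+RN)^{-M}$; for $\xi$ in the thin transition annuli abutting $A_{N}$, where $|\xi-\xi'|$ may be comparable to the distance $s$ of $\xi$ to $\partial A_{N}$, one instead exploits that $\mu(\xi)$ vanishes to infinite order there, i.e. $|\mu(\xi)|\lesssim_{M}N^{a}(s/N)^{M}$, combined with $|\mathcal{F}[|x|^{b}\sigma_R](\xi-\xi')|\lesssim R^{b+1}(1+Rs)^{-M'}$. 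A suitable (region-dependent) choice of the decay exponents --- legitimate because $\widehat{m_{1}}$ is Schwartz, so they need not be integers --- makes the surviving powers of $N$ and $R$ match the claimed $N^{-c}R^{-a+b-c}$; one distinguishes, as usual, whether $RN$ is large (decay wins) or small (the crude bounds $|\mathcal{F}[|x|^{b}\sigma_R]|\lesssim R^{b+1}$, $|\mu|\lesssim N^{a}$ suffice, using $a+c\ge0$).

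The stated variants follow by dyadic summation: $\sigma_{>R}$, $\sigma_{<R}$, and $\sigma_{r<\cdot<R}=\sigma_{<r}-\sigma_{\ge R}$ are sums of the pieces $\sigma_{R'}$ to which the proven estimate applies, and one sums the resulting series --- using that pieces at scales $R'\ll 1/N$ contribute negligibly, since there the estimate carries an arbitrarily large gain. The side conditions are precisely the convergence requirements: $a+c>b+1$ makes $\sum_{R'\ge R}(R')^{-a+b-c}$ converge (with $R'=R$ dominant) for $\sigma_{>R}$, and $a+c\ge0$ together with $b=0$ (which removes the singularity of $|x|^{b}\sigma_{<R}$ at the origin) for $\sigma_{<R}$; for $\sigma_{r<\cdot<R}$ the sum is controlled by the two endpoint scales $r$ and $R$, and the factor $(R/r)^{a+|b|+c+2}$ bounds both the number of intervening dyadic scales and the worst-case ratio of those endpoint contributions.

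I expect the main difficulty to be the bookkeeping in the ``decay wins'' regime: the supports of $\mu$ and of $\widehat{P_{N}f}$ are only barely disjoint --- they share the boundary circles $|\xi|=N/2^{\delta}$ and $|\xi|=2^{\delta}N$ --- so one cannot simply cite ``separated supports yield rapid decay''; the gain must be harvested either from the genuine separation $|\xi-\xi'|\gtrsim N$ that holds only in the bulk, or from the infinite-order flatness of the $C_0^{\infty}$ cutoffs at the common boundary, and these two contributions must be pieced together and balanced against the growth of $|\xi|^{a}$ at high frequency and the $L^{2}\to L^{p}$ loss, uniformly in the wide parameter range $a,b,c,p$.
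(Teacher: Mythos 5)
Your single-scale argument is, in substance, the right one and matches what the paper relies on: the paper does not reprove the first inequality but cites Lemma 3.1 of \cite{O}, whose proof is precisely the Fourier-side kernel estimate you describe --- the decay $|\mathcal{F}[|x|^{b}\sigma_R](\eta)|\lesssim_M R^{b+1}(1+R|\eta|)^{-M}$ played against the infinite-order vanishing of $1-\sigma_{\frac{N}{2^{\delta}}\le\cdot\le 2^{\delta}N}$ at the two points where its support touches $\supp\sigma_N$, followed by Schur/Young and a Bernstein-type step for $p>2$. The only piece the paper writes out is the bound $\| \mathcal{F} [ |\dx|^{a+c} (|x|^b \sigma_{\frac{r}{R}< \cdot <1})] \|_{L^1}\lesssim (R/r)^{a+|b|+c+2}$ by integration by parts on the single rescaled bump; your alternative of summing the single-scale estimate over the finitely many dyadic scales in $[r,R]$ also produces a factor $\lesssim (R/r)^{a+|b|+c+2}$, since $a-b+c\le a+|b|+c$ and the number of scales is $\lesssim (R/r)^{2}$, so that variant is fine (modulo the slip $\sigma_{r<\cdot<R}=\sigma_{<r}-\sigma_{\ge R}$, where $r$ and $R$ are exchanged).

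The genuine gap is in your treatment of the $\sigma_{<R}$ variant. The claim that ``pieces at scales $R'\ll 1/N$ contribute negligibly, since there the estimate carries an arbitrarily large gain'' is backwards: with $b=0$ the single-scale bound for $\sigma_{R'}$ is $N^{-c}(R')^{-(a+c)}$, which \emph{grows} as $R'\to 0$ whenever $a+c>0$ and is constant in $R'$ when $a+c=0$, so $\sum_{R'\le R}N^{-c}(R')^{-(a+c)}$ diverges for every admissible pair $(a,c)$. The gain $(R'N)^{-c}$ for $c$ large is available only at scales $R'\gg 1/N$, which is what rescues the $\sigma_{>R}$ sum, not the $\sigma_{<R}$ one. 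The correct route --- and the real reason the hypothesis there is ``$b=0$'' rather than a summability condition --- is that no dyadic decomposition is needed: $\sigma_{<R}(x)=\sigma_{\le R}(x)-\sigma_R(x)=\sigma(2^{\delta}x/R)$ is itself a single dilate of a fixed $C_0^{\infty}$ bump, so for $b=0$ your single-scale kernel argument applies to it verbatim; the sole role of restricting to $\sigma_R$ (supported away from the origin) in the main inequality is to keep $|x|^{b}$ smooth on the support, which is vacuous when $b=0$. With that substitution the proposal is complete.
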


\begin{proof}
The first inequality is proved in Lemma 3.1 in \cite{O}.

Because integration by parts yields
\[
| |\mathcal{F} [ |x|^b \sigma_{\frac{r}{R}< \cdot <1}](\xi) | \lesssim |\xi|^{-k} \left( \frac{R}{r} \right)^{k+|b|}
\]
for any $k \in \mathbb{N}_0$, we obtain
\[
\| \mathcal{F} [ |\dx|^{a+c} (|x|^b \sigma_{\frac{r}{R}< \cdot <1})] \|_{L^1}
= \| |\xi|^{a+c} \mathcal{F} [ |x|^b \sigma_{\frac{r}{R}< \cdot <1}] \|_{L^1}
\lesssim \left( \frac{R}{r} \right)^{a+|b|+c+2}
\]
provided that $a \ge 0$ and $a+c \ge 0$.
Hence, the second inequality follows from the same argument as in Lemma 3.1 in \cite{O}.
\end{proof}

Lemma \ref{lem:freq_spat_loc} yields that for any $a \ge 0$, $b \in \R$, and $c \ge 0$,
\begin{align}
& \label{est:uhperr}
\| (1-\PN) |\partial_x|^{a} (|x|^{b} \uhp_N) \|_{L^2}
\lesssim_{a,b,c} t^{-\frac{a-b}{5}} (t^{\frac{1}{5}} N)^{-c} \| u_N \|_{L^2},
\\
& \label{est:ueperr}
\| (1-\PN) |\partial_x|^{a} \uep_N \|_{L^2}
\lesssim_{a,c} t^{-\frac{a}{5}} (t^{\frac{1}{5}} N)^{-c} \| u_N \|_{L^2},
\\
& \label{est:ueperr2}
\| (1-\PN) |\partial_x|^{a} (|x|^b \sigma_{>t^{\frac{1}{5}}} (x) \uep_N) \|_{L^2}
\lesssim_{a,b,c} t^{-\frac{a-b}{5}} (t^{\frac{1}{5}} N)^{-c} \| u_N \|_{L^2}.
\end{align}

Factorizing the symbol $x+t\xi^4$ of $\J$, we define
\[
\J_+ := |x|^{\frac{1}{4}} +i t^{\frac{1}{4}} \dx, \quad
\J_- := \sum_{k=0}^3 |x|^{\frac{3-k}{4}} \left( -i t^{\frac{1}{4}} \dx \right)^k .
\]
These operators are useful in our analysis.

First, we show the following frequency localized estimates.

\begin{lem} \label{lem:he_freq_est}
For $t \ge 1$ and $N \ge t^{-\frac{1}{5}}$, we have
\begin{align}
& \left\| (|x|^{\frac{3}{4}} + t^{\frac{3}{4}} N^{3}) \J_+ \uhp_{N} (t) \right\|_{L^2} \lesssim \| u_N (t) \|_{\wt{X}}, \label{uhpNl2} \\
& \left\|(|x|+ t N^{4}) \uep_{N} (t) \right\|_{L^2} \lesssim \| u_N (t) \|_{\wt{X}}. \label{uepNl2}
\end{align}
\end{lem}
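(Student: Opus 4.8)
The plan is to view both inequalities as \emph{ellipticity estimates}: on the support of the relevant piece and on the frequency band $|\xi|\sim N$, an operator built from $\J$ is elliptic of a definite order, hence invertible up to errors made harmless by Lemma~\ref{lem:freq_spat_loc}, and one then closes using $\|\J u_N\|_{L^2}\le\|u_N\|_{\wt X}$ together with commutator errors of size $N^{-1}\|u_N\|_{L^2}$. Two facts will be used throughout. First, for $N\ge t^{-\frac15}$ one has $\|u_N\|_{\wt X}\gtrsim N^{-1}\|u_N\|_{L^2}$. Second, since $\wh{u_N}$ vanishes near $\xi=0$ we have $[\J,P^{\pm}]u_N=0$, while $[\J,\sigma_N^{\rm{hyp}}]=t[\dx^4,\sigma_N^{\rm{hyp}}]$ has coefficients $O((tN^4)^{-j})$ for $1\le j\le4$, so that $\|[\J,\sigma_N^{\rm{hyp}}]u_N^+\|_{L^2}\lesssim N^{-1}\|u_N\|_{L^2}$; consequently $\|\J\uhp_N\|_{L^2}+\|\J\uep_N\|_{L^2}\lesssim\|\J u_N\|_{L^2}+N^{-1}\|u_N\|_{L^2}\lesssim\|u_N\|_{\wt X}$.

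For \eqref{uepNl2}, note that $\supp\uep_N\subseteq\R_+\cup\{x<0:|x|\le\tfrac13tN^4\}\cup\{x<0:|x|\ge3tN^4\}$, on which $|x+tN^4|\ge\tfrac12(|x|+tN^4)$; hence it suffices to bound $\|(x+tN^4)\uep_N\|_{L^2}$. Write $(x+tN^4)\uep_N=\J\uep_N-t(\dx^4-N^4)\uep_N$ and split $\uep_N=\PN\uep_N+(1-\PN)\uep_N$. The symbol $\xi^4-N^4=|\xi|^4-N^4$ is $O(\delta N^4)$ on the support of $\wh{\PN\uep_N}$, so $\|t(\dx^4-N^4)\PN\uep_N\|_{L^2}\lesssim\delta\,tN^4\|\uep_N\|_{L^2}\le\delta\|(|x|+tN^4)\uep_N\|_{L^2}$, while \eqref{est:ueperr} with $c$ large gives $\|t(\dx^4-N^4)(1-\PN)\uep_N\|_{L^2}\lesssim N^{-1}\|u_N\|_{L^2}$. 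Since $\delta=\tfrac1{100}$ the $\delta$-term is absorbed, and $\|(|x|+tN^4)\uep_N\|_{L^2}\lesssim\|\J\uep_N\|_{L^2}+N^{-1}\|u_N\|_{L^2}\lesssim\|u_N\|_{\wt X}$.

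For \eqref{uhpNl2}, on $\supp\uhp_N$ we have $|x|\sim tN^4$, hence $|x|^{3/4}+t^{3/4}N^3\sim t^{3/4}N^3$, so the claim reduces to $t^{3/4}N^3\|\J_+\uhp_N\|_{L^2}\lesssim\|u_N\|_{\wt X}$. I would first establish the algebraic factorization $\J_-\J_+=-\J+\mathcal R$ on $\R_-$, obtained by expanding the product with $P:=|x|^{1/4}$, $Q:=-it^{1/4}\dx$ and using $[\dx,|x|^{1/4}]=-\tfrac14|x|^{-3/4}$: indeed $P^4-Q^4=|x|-t\dx^4=-\J$, and $\mathcal R$ collects the commutator terms, a differential operator of order at most two with coefficients built from $t^{1/4}|x|^{-3/4}$ and its derivatives; using $|x|\sim tN^4$, $|\xi|\sim N$, $t^{\frac15}N\ge1$ and \eqref{est:uhperr} one gets $\|\mathcal R\uhp_N\|_{L^2}\lesssim N^{-1}\|u_N\|_{L^2}$. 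The crux is then to show that $\J_-$ is elliptic of order $t^{3/4}N^3$ on functions of positive frequency $\sim N$ supported where $|x|\sim tN^4$, i.e.\ $\|w\|_{L^2}\lesssim t^{-3/4}N^{-3}\|\J_-w\|_{L^2}$ for such $w$; granting this for $w=\J_+\uhp_N$ (up to a tail controlled by \eqref{est:uhperr}) and recalling $\J_-\J_+\uhp_N=-\J\uhp_N+\mathcal R\uhp_N$, we conclude
\[
t^{3/4}N^3\|\J_+\uhp_N\|_{L^2}\lesssim\|\J\uhp_N\|_{L^2}+\|\mathcal R\uhp_N\|_{L^2}+N^{-1}\|u_N\|_{L^2}\lesssim\|u_N\|_{\wt X}.
\]

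The ellipticity of $\J_-$ is the step I expect to be the main obstacle. Since the full symbol $\sum_{k=0}^{3}|x|^{(3-k)/4}(t^{1/4}\xi)^k$ of $\J_-$ is, for $\xi>0$, a sum of four positive terms each $\sim t^{3/4}N^3$ on the relevant region, I would pair with $w$: $\Re\langle\J_-w,w\rangle_{L^2}=\sum_{k=0}^3\Re\big((-it^{1/4})^k\langle|x|^{(3-k)/4}\dx^kw,w\rangle\big)$, where the term $k=0$ equals $\int|x|^{3/4}|w|^2\gtrsim(tN^4)^{3/4}\|w\|_{L^2}^2$ directly from $\supp w$, and for $k=1,2,3$ one writes $\dx^kw=(iN)^kw+(\dx^k-(iN)^k)w$, the main part contributing the positive quantity $(t^{1/4}N)^k\int|x|^{(3-k)/4}|w|^2\gtrsim t^{3/4}N^3\|w\|_{L^2}^2$ and the remainder being $O(\delta\,t^{3/4}N^3\|w\|_{L^2}^2)$ on the positive-frequency part of $\PN w$ and controlled by \eqref{est:uhperr} elsewhere, which $\delta=\tfrac1{100}$ lets one absorb; Cauchy--Schwarz then gives $t^{3/4}N^3\|w\|_{L^2}\lesssim\|\J_-w\|_{L^2}$. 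The delicate point is that $w=\J_+\uhp_N$ is only essentially frequency-localized at $|\xi|\sim N$, so one must show that its content on the negative part of this band is negligible with a gain of arbitrarily many powers of $t^{\frac15}N$ — enough to beat the weights $t^{3/4}N^3$ and $tN^4$ uniformly down to $N\sim t^{-\frac15}$ — which should follow from the positivity of $\supp\wh{u_N^+}$ and the smoothness of $\sigma_N^{\rm{hyp}}$ on scale $tN^4$, in the spirit of Lemma~\ref{lem:freq_spat_loc}. Once this is in hand, the identity $\J_-\J_+=-\J+\mathcal R$, the bound $\|\mathcal R\uhp_N\|_{L^2}\lesssim N^{-1}\|u_N\|_{L^2}$, and the whole argument for \eqref{uepNl2} are routine.
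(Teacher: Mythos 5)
Your proposal is correct in outline and shares the paper's basic architecture for the hyperbolic bound, but both halves are implemented differently. For \eqref{uhpNl2} the paper also uses the factorization $\J_-\J_+\uhp_N=-\J\uhp_N+(\text{l.o.t.})$, but it obtains the ellipticity of $\J_-$ by expanding $\|\J_-f\|_{L^2}^2$ into the four diagonal squares plus cross terms and integrating by parts, so that each cross term becomes a constant times $\int_\R\xi\,|\mathcal{F}[|x|^a\dx^bf]|^2d\xi$; positivity then requires only that $f=\J_+\uhp_N$ have negligible \emph{negative-frequency} content, with no need for $\xi$ to be close to $N$. Your pairing $\Re\langle\J_-w,w\rangle\gtrsim t^{3/4}N^3\|w\|^2$ via $\dx^k\approx(iN)^k$ also works (the $k=0$ term alone already gives the full lower bound, so you only need the $k=1,2,3$ terms to be nonnegative up to absorbable errors), but it needs the stronger localization to the positive band $\xi\in[2^{-2\delta}N,2^{2\delta}N]$, which is exactly the step you flag. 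That step is genuine but not a gap specific to you: \eqref{est:uhperr} as stated only removes the complement of the two-sided band, and the paper's own appeal to it for $\|P^-|\dx|^{1/2}(\cdots)\|_{L^2}$ tacitly uses the one-sided refinement coming from $\supp\wh{u_N^+}\subset\R_+$ and the smoothness of $\sigma_N^{\rm{hyp}}$ at scale $tN^4$, with gain $(tN^5)^{-M}=(t^{1/5}N)^{-5M}$ — enough to beat $t^{3/4}N^4\le(t^{1/5}N)^4$ as you need. For \eqref{uepNl2} your route is genuinely different and simpler: the paper uses the exact identity $\|xf\|^2+\|t\dx^4f\|^2=\|\J f\|^2-2\int tx|\dx^2f|^2dx$ together with a three-way spatial decomposition of $\uep_N$ relative to $tN^4$ and sharp tracking of the constants $(1+\delta)2^{9\delta}/3$, whereas your observation that $|x+tN^4|\ge\tfrac12(|x|+tN^4)$ on $\supp\uep_N$ plus the symbol bound $|\xi^4-N^4|\le(2^{8\delta}-1)N^4$ on the band avoids the decomposition entirely and absorbs with room to spare. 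Both arguments share the (standard, unaddressed in the paper as well) caveat that absorption presupposes a priori finiteness of the weighted norm.
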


\begin{proof}
Set $f := \J_+ \uhp_N$.
Because \eqref{est:uhperr} yields
\begin{align*}
\left| t^{\frac{2}{5}(k-b)+\frac{1}{2}} \int_{\R_-} \xi |\mathcal{F} [ |x|^b \dx^k f ]|^2 d\xi \right|
& \le t^{\frac{2}{5}(k-b)+\frac{1}{2}} \| P^- |\dx|^{\frac{1}{2}} ( |x|^b \dx^k \J_+ \uhp_N) \|_{L^2}^2 \\
& \lesssim N^{-2} \| u_N \|_{L^2}^2,
\end{align*}
we have
\begin{equation} \label{eq:uhpJ-}
\begin{aligned}
\| \J_-f \|_{L^2}^2
=& \sum_{k=0}^{3} \left\| t^{\frac{k}{4}} |x|^{\frac{3-k}{4}} \dx^k f \right\|_{L^2}^2
-2 t^{\frac{1}{4}} \Im \int_{\R} |x|^{\frac{5}{4}} f(x) \ol{\dx f(x)} dx \\
& -2 t^{\frac{1}{2}} \Re \int_{\R} |x| f(x) \ol{\dx^2 f(x)} dx
+2 t^{\frac{3}{4}} \Im \int_{\R} |x|^{\frac{3}{4}} f(x) \ol{\dx^3 f(x)} dx \\
& -2 t^{\frac{3}{4}} \Im \int_{\R} |x|^{\frac{3}{4}} \dx f \ol{\dx^2 f(x)} dx
-2 t \Re \int_{\R} |x|^{\frac{1}{2}} \dx f(x) \ol{\dx^3 f(x)} dx \\
& -2 t^{\frac{5}{4}} \Im \int_{\R} |x|^{\frac{1}{4}} \dx^2 f(x) \ol{\dx^3 f(x)} dx \\
=& \sum_{k=0}^{3} \left\| t^{\frac{k}{4}} |x|^{\frac{3-k}{4}} \dx^k f \right\|_{L^2}^2
+2 t^{\frac{1}{4}} \int_{\R} \xi |\mathcal{F} [ |\cdot|^{\frac{5}{8}} f] (\xi)|^2 d\xi \\
& +2 t^{\frac{1}{2}} \int_{\R} |x| |\dx f(x)|^2 dx
+\frac{3}{8} t^{\frac{3}{4}} \int_{\R} \xi |\mathcal{F} [ |\cdot|^{-\frac{5}{8}}f] (\xi)|^2 d\xi \\
& +4t^{\frac{3}{4}} \int_{\R} \xi |\mathcal{F} [|\cdot|^{\frac{3}{8}} \dx f] (\xi)|^2 d\xi
+2t \int_{\R} |x|^{\frac{1}{2}} |\dx^2 f(x)|^2 dx \\
& +\frac{1}{4} t \int_{\R} |x|^{-\frac{3}{2}} |\dx f(x)|^2 dx
+2 t^{\frac{5}{4}} \int_{\R} \xi |\mathcal{F} [ |\cdot|^{\frac{1}{8}} \dx^2 f] (\xi)|^2 d\xi \\
\ge & \sum_{k=0}^{3} \left\| t^{\frac{k}{4}} |x|^{\frac{3-k}{4}} \dx^k f \right\|_{L^2}^2
+2 t^{\frac{1}{4}} \int_{\R_-} \xi |\mathcal{F} [ |\cdot|^{\frac{5}{8}} f] (\xi)|^2 d\xi \\
& +\frac{3}{8} t^{\frac{3}{4}} \int_{\R_-} \xi |\mathcal{F} [ |\cdot|^{-\frac{5}{8}}f] (\xi)|^2 d\xi
+4t^{\frac{3}{4}} \int_{\R_-} \xi |\mathcal{F} [|\cdot|^{\frac{3}{8}} \dx f] (\xi)|^2 d\xi \\
& +2 t^{\frac{5}{4}} \int_{\R_-} \xi |\mathcal{F} [ |\cdot|^{\frac{1}{8}} \dx^2 f] (\xi)|^2 d\xi \\
\gtrsim & \sum_{k=0}^{3} \left\| t^{\frac{k}{4}} |x|^{\frac{3-k}{4}} \dx^k f \right\|_{L^2}^2
- N^{-2} \| u_N \|_{L^2}^2.
\end{aligned}
\end{equation}
Because
\begin{align*}
& \J_- \J _+ \uhp_N \\
& = -\J \uhp_N + \frac{i}{4} t^{\frac{1}{4}} |x|^{-\frac{1}{4}} \uhp_N + \frac{3}{16} t^{\frac{1}{2}} |x|^{-\frac{3}{2}} \uhp_N + \frac{1}{2} t^{\frac{1}{2}} |x|^{-\frac{1}{2}} \dx \uhp_N \\
& \quad  - \frac{21}{63}i t^{\frac{3}{4}} |x|^{-\frac{11}{4}} \uhp_N - \frac{9}{16}i t^{\frac{3}{4}} |x|^{-\frac{7}{4}} \dx \uhp_N - \frac{3}{4} i t^{\frac{3}{4}} |x|^{-\frac{3}{4}} \dx^2 \uhp_N
\end{align*}
and
\begin{align*}
\J \uhp_N = & \sigma_N^{\rm{hyp}} \J u^+_N + t \Big( \dx^4\sigma_N^{\rm{hyp}} \cdot u^+_N + 4\dx^3 \sigma_N^{\rm{hyp}} \cdot \dx u^+_N \\
& \hspace*{70pt} + 6 \dx^2 \sigma_N^{\rm{hyp}} \cdot \dx^2 u^+_N+ 4 \dx \sigma_N^{\rm{hyp}} \cdot \dx^3 u^+_N \Big),
\end{align*}
by \eqref{est:uhperr} and $tN^5 \ge 1$, we have
\[
\| \J_- \J_+ \uhp_N \|_{L^2}
\lesssim \| \J u_N \|_{L^2} + N^{-1} \| u_N \|_{L^2}.
\]
Combining this with \eqref{eq:uhpJ-}, we obtain
\[
\| (|x|^{\frac{3}{4}} + t^{\frac{3}{4}} \dx^3) \J_+ \uhp_N \| _{L^2} \lesssim \| u_N (t) \|_{\wt{X}}.
\]

For the elliptic bound, we decompose $\uep_N$ into three parts
\[
\uep_N = \sigma _{\le \frac{1}{3} tN^{4}} \uep_{N} + \sigma _{\frac{1}{3} tN^{4} < \cdot < 3 tN^{4}} \uep_N + \sigma _{\ge 3 tN^{4}} \uep_N.
\]
We observe that the equation
\begin{equation} \label{eq:ueob}
\left\| x f \right\|_{L^2}^2 + \| t\dx^{4} f \|_{L^2}^2 = \| \J f \|_{L^2}^2 - 2 \int_{\R} tx |\dx^2 f(x)|^2 dx
\end{equation}
holds for any smooth function $f$.

By $(a+b)^2 \le (1+\delta) a^2+ (1+\delta^{-1}) b^2$ and \eqref{est:ueperr2}, we have
\begin{align*}
& \left| \int_{\R} tx | \dx^2 ( \sigma _{\ge 3 tN^{4}} \uep_{N} ) (t,x)|^2 dx \right| \\
& \le \frac{2^{\delta}}{3} N^{-4} \left\| x \dx^2 ( \sigma _{\ge 3 tN^{4}} \uep_{N} (t) ) \right\|_{L^2}^2 \\
& \le \frac{(1+\delta)2^{\delta}}{3} N^{-4} \left\| \dx^2 ( x \sigma _{\ge 3 tN^{4}} \uep_{N} (t) ) \right\|_{L^2}^2 + C N^{-4} \left\| \dx ( \sigma _{\ge 3 tN^{4}} \uep_{N} (t) ) \right\|_{L^2}^2 \\
& \le \frac{(1+\delta)^2 2^{9\delta}}{3} \left\| \PN \left( x \sigma _{\ge 3 tN^{4}} \uep_{N} (t) \right) \right\|_{L^2}^2 \\
& \quad + C N^{-4} \left\| (1-\PN) \dx^2 ( x \sigma _{\ge 3 tN^{4}} \uep_{N} (t) ) \right\|_{L^2}^2 \\
& \quad + C N^{-2} \| u_N (t) \|_{L^2}^2 \\
& \le \frac{(1+\delta)^2 2^{9\delta}}{3} \left\| x \sigma _{\ge 3 tN^{4}} \uep_{N} (t) \right\|_{L^2}^2 + C N^{-2} \| u_{N} (t) \|_{L^2}^2 .
\end{align*}
Taking $f= \sigma _{\ge 3 tN^{4}} \uep_{N}$ in \eqref{eq:ueob}, by $2 \frac{(1+\delta)^2 2^{9\delta}}{3} <1$, we have
\[
\left\| x \sigma _{\ge 3 tN^{4}} \uep_{N} (t) \right\|_{L^2}
\lesssim \| u_N (t) \|_{\wt{X}}.
\]

By $(a+b)^2 \le (1+\delta) a^2+ (1+\delta^{-1}) b^2$ and \eqref{est:ueperr}, we have
\begin{align*}
& \left| \int_{\R} tx | \dx^2 ( \sigma _{\le \frac{1}{3} tN^{4}} \uep_{N} ) (t,x)|^2 dx \right| \\
& \le \frac{2^{\delta}}{3} N^{4} \left\| t \dx^2 ( \sigma _{\le \frac{1}{3} tN^{4}} \uep_{N} (t) ) \right\|_{L^2}^2 \\
& \le \frac{(1+\delta) 2^{9\delta}}{3} \left\| \PN t \dx^{4} ( \sigma _{\le \frac{1}{3} tN^{4}} \uep_{N} (t) ) \right\|_{L^2}^2  \\
& \quad + C N^{4} \left\| (1-\PN) t \dx^2 ( \sigma _{\le \frac{1}{3} tN^{4}} \uep_{N} (t) ) \right\|_{L^2}^2 \\
& \le \frac{(1+\delta) 2^{9\delta}}{3} \left\| t \dx^{4} (\sigma _{<\frac{1}{3} tN^{4}} \uep_{N} (t)) \right\|_{L^2}^2 + C N^{-2} \| u_N (t) \|_{L^2}^2 .
\end{align*}
Taking $f= \sigma _{\le \frac{1}{3} tN^{4}} \uep_{N}$ in \eqref{eq:ueob}, by $2 \frac{(1+\delta) 2^{9\delta}}{3} <1$, we have
\[
\| t \dx^{4} (\sigma _{\le \frac{1}{3} tN^{4}} \uep_{N} (t)) \|_{L^2}
\lesssim \| u_N (t) \|_{\wt{X}}.
\]

From
\[
-\int_{\R} tx | \sigma _{\frac{1}{3} tN^{4} < \cdot < 3 tN^{4}} (x) \uep_{N} (t,x)|^2 dx <0
\]
taking $f=\sigma _{\frac{1}{3} tN^4 < \cdot < 3 tN^{4}} \uep_{N}$ in \eqref{eq:ueob}, we have
\[
tN^4 \| \sigma _{\frac{1}{3} tN^{4} < \cdot < 3 tN^{4}} \uep_{N} (t) \|_{L^2}
\lesssim \| u_N (t) \|_{\wt{X}}.
\]
\end{proof}

Second, by summing up the frequency localized estimates, we obtain the $L^2$-estimates.

\begin{cor} \label{cor:uL2}
For $t \ge 1$, we have
\begin{align}
& \sum_{k=0}^3 \sum_{l=0}^k \| t^{\frac{k+1}{4}} |x|^{-\frac{5k+1}{4}+l} \dx^l \uhp \|_{L^2} \lesssim \| u(t) \|_{\wt{X}}, \label{uhpL22} \\
& \sum_{k=0}^3 \| t^{\frac{k}{4}} |x|^{-\frac{k-3}{4}} \J_+ \dx^k \uhp \| _{L^2} \lesssim \| u (t) \|_{\wt{X}}, \label{uhpL2} \\
& \sum_{k=0}^3 \| t^{\frac{k+1}{5}} \lr{t^{-\frac{1}{5}} x}^{-\frac{k}{4}+1} \dx^k \uep \|_{L^2} \lesssim \| u(t) \|_{\wt{X}}. \label{uepL2}
\end{align}
\end{cor}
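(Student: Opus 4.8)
The plan is to prove all three estimates by decomposing $\uhp$ and $\uep$ into their scaled dyadic pieces $\uhp_N$ and $\uep_N$ ($N\in 2^{\delta\Z}$, $N\ge t^{-\frac15}$), applying the frequency-localized bounds \eqref{uhpNl2}--\eqref{uepNl2} of Lemma~\ref{lem:he_freq_est} (and, for \eqref{uhpL2}, the intermediate inequality \eqref{eq:uhpJ-} from its proof), and then reassembling. The basic observation used throughout is that $\uhp_N=\sigma_N^{\rm hyp}u_N^+$ is supported in the interval $\{\frac{1}{3\cdot 2^{\delta}}tN^4\le |x|\le 3\cdot 2^{\delta}tN^4\}\subset\R_-$, on which $|x|\sim tN^4$, so that a weight $t^a|x|^b$ may be replaced freely there by $t^a(tN^4)^b$ (for any $b\in\R$); moreover $tN^5\ge 1$, $\|u_N\|_{\wt X}\gtrsim t^{\frac15}\lr{t^{\frac15}N}^{-1}\|u_N\|_{L^2}\gtrsim N^{-1}\|u_N\|_{L^2}$, and $\sum_{N\ge t^{-\frac15}}\|u_N\|_{\wt X}^2\lesssim\|u(t)\|_{\wt X}^2$.

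For \eqref{uhpL22}, Leibniz's rule together with the frequency localization of $u_N^+$ and the bound $(tN^4)^{-1}\le N$ for derivatives landing on $\sigma_N^{\rm hyp}$ gives $\|\dx^l\uhp_N\|_{L^2}\lesssim N^l\|u_N\|_{L^2}$; hence on the support of $\uhp_N$,
\[
\big\| t^{\frac{k+1}{4}}|x|^{-\frac{5k+1}{4}+l}\dx^l\uhp_N\big\|_{L^2}
\lesssim t^{\frac{k+1}{4}}(tN^4)^{-\frac{5k+1}{4}+l}N^l\|u_N\|_{L^2}
=(tN^5)^{-(k-l)}N^{-1}\|u_N\|_{L^2}\lesssim\|u_N(t)\|_{\wt X},
\]
using $0\le l\le k$. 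Since the spatial supports of the $\uhp_N$ have overlap at most $\delta^{-1}$, summing in $N$ in $L^2$ costs only $\delta^{-1/2}$, and then summing the finitely many pairs $(k,l)$ yields \eqref{uhpL22}. For \eqref{uhpL2}, the chain of inequalities in \eqref{eq:uhpJ-} together with the bound $\|\J_-\J_+\uhp_N\|_{L^2}\lesssim\|u_N(t)\|_{\wt X}$ established inside the proof of Lemma~\ref{lem:he_freq_est} already gives $\sum_{k=0}^3\|t^{\frac k4}|x|^{\frac{3-k}{4}}\dx^k(\J_+\uhp_N)\|_{L^2}\lesssim\|u_N(t)\|_{\wt X}$; on $\supp\uhp_N\subset\R_-$ the commutator $[\dx^k,\J_+]=[\dx^k,|x|^{\frac14}]$ is a sum of terms of the form $|x|^{\frac14-j}\dx^{k-j}$ with $1\le j\le k$, and the corresponding expressions are bounded there by $N^{-1}\|u_N\|_{L^2}\lesssim\|u_N(t)\|_{\wt X}$, so $\sum_{k=0}^3\|t^{\frac k4}|x|^{\frac{3-k}{4}}\J_+\dx^k\uhp_N\|_{L^2}\lesssim\|u_N(t)\|_{\wt X}$; since $\J_+\dx^k\uhp_N$ is still supported in $\supp\uhp_N$, the same bounded-overlap summation in $N$ finishes \eqref{uhpL2}.

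For \eqref{uepL2}, write $\uep=u^+_{\le t^{-\frac15}}+\sum_{N\ge t^{-\frac15}}\uep_N$. For a single high-frequency piece, reduce first (via \eqref{est:ueperr}--\eqref{est:ueperr2}) to the frequency-localized part $\PN\uep_N$, for which $\|\dx^k\uep_N\|_{L^2}\lesssim N^k\|\uep_N\|_{L^2}$, and split the spatial integral at $|x|=t^{\frac15}$. On $|x|\le t^{\frac15}$ one has $\lr{t^{-\frac15}x}\sim 1$ and \eqref{uepNl2} gives the bound $t^{\frac{k+1}{5}}N^k(tN^4)^{-1}\|u_N\|_{\wt X}=(t^{\frac15}N)^{k-4}\|u_N(t)\|_{\wt X}$, whose prefactor $(t^{\frac15}N)^{k-4}\le(t^{\frac15}N)^{-1}$ is summable in $N$. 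On $|x|>t^{\frac15}$ one has $\lr{t^{-\frac15}x}^{-\frac k4+1}\sim(t^{-\frac15}|x|)^{-\frac k4+1}$, and the H\"older interpolation $\||x|^{1-\frac k4}\uep_N\|_{L^2}\le\||x|\uep_N\|_{L^2}^{1-\frac k4}\|\uep_N\|_{L^2}^{\frac k4}$ combined with the two bounds in \eqref{uepNl2} gives $\lesssim\|u_N(t)\|_{\wt X}$ with no gain in $N$, so here we sum in $N$ by a weighted frequency almost-orthogonality (legitimate since $\lr{t^{-\frac15}x}^{-\frac k4+1}$ varies at scale $t^{\frac15}\ge N^{-1}$). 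The low-frequency piece $u^+_{\le t^{-\frac15}}=P^+_{\le t^{-\frac15}}u$ carries no hyperbolic--elliptic splitting and is handled by hand: $\|\dx^k u^+_{\le t^{-\frac15}}\|_{L^2}\lesssim t^{-\frac k5}\|u^+_{\le t^{-\frac15}}\|_{L^2}\lesssim t^{-\frac{k+1}{5}}\|u(t)\|_{\wt X}$ using $\|u(t)\|_{\wt X}\gtrsim t^{\frac15}\|\lr{t^{\frac15}\dx}^{-1}u(t)\|_{L^2}$, while the weighted version uses $\lr{t^{-\frac15}x}^{-\frac k4+1}\lesssim\lr{t^{-\frac15}x}$ together with the identity $x\,u^+_{\le t^{-\frac15}}=P^+_{\le t^{-\frac15}}(\J u)-t\dx^4 u^+_{\le t^{-\frac15}}+(\text{lower order})$, in which the fourth derivative is harmless because of the frequency cutoff.

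The computations themselves are mostly bookkeeping once one exploits $|x|\sim tN^4$ on the hyperbolic supports and $tN^5\ge 1$. I expect the only genuinely delicate points to be (i) the summation in $N$ --- physical-space bounded overlap of the supports $\supp\uhp_N$ for the hyperbolic terms, versus a weighted frequency almost-orthogonality for the elliptic terms, where the region $|x|>t^{\frac15}$ produces no decay in $N$ --- and (ii) the low-frequency part of $\uep$, where Lemma~\ref{lem:he_freq_est} does not apply at all and the weighted bound must be routed through $\J u$ and $\dx^4 u^+_{\le t^{-\frac15}}$ rather than through $x\,u^+$ directly.
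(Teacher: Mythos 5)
Your proposal is correct and follows essentially the same route as the paper: dyadic decomposition with the frequency-localized bounds of Lemma \ref{lem:he_freq_est} (and the intermediate estimate \eqref{eq:uhpJ-}), the commutation of $\J_+$ with $\dx^k$ reduced to \eqref{uhpL22}-type terms, the split of the elliptic part at $|x|=t^{\frac15}$ with the low-frequency piece routed through $\J u$, and almost-orthogonal summation in $N$. The only differences are cosmetic (Leibniz on $\sigma_N^{\rm hyp}$ versus the $(1-\PN)$ error terms of \eqref{est:uhperr}, and Hölder interpolation of the weight versus the paper's pointwise bound $t^{\frac k4}|x|^{-\frac k4+1}N^k\lesssim |x|+tN^4$).
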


\begin{proof}
By \eqref{est:uhperr} and the definition of the $\wt{X}$-norm, we have
\begin{align*}
& \sum_{k=0}^3 \sum_{l=0}^k \| t^{\frac{k+1}{4}} |x|^{-\frac{5k+1}{4}+l} \dx^l \uhp \|_{L^2} \\
& \lesssim \sum_{k=0}^3 \sum_{l=0}^k \Bigg( \sum _{\substack{N \in 2^{\delta \mathbb{Z}} \\ N \ge t^{-\frac{1}{5}}}} \| t^{\frac{k+1}{4}} |x|^{-\frac{5k+1}{4}+l} \dx^l \uhp_N (t) \| _{L^2}^2 \Bigg)^{\frac{1}{2}} \\
& \quad + \sum_{k=0}^3 \sum_{l=0}^k \sum _{\substack{N \in 2^{\delta \mathbb{Z}} \\ N \ge t^{-\frac{1}{5}}}} \| (1-\PN) t^{\frac{k+1}{4}} |x|^{-\frac{5k+1}{4}+l} \dx^l \uhp_N \| _{L^2} \\
& \lesssim \sum_{k=0}^3 \sum_{l=0}^k \Bigg( \sum _{\substack{N \in 2^{\delta \mathbb{Z}} \\ N \ge t^{-\frac{1}{5}}}} \| t^{-k+l} N^{-5k+4l-1} \dx^l \uhp_N (t) \| _{L^2}^2 \Bigg)^{\frac{1}{2}} + \| u(t) \|_{\wt{X}} \\
& \lesssim \| u (t) \|_{\wt{X}}.
\end{align*}

We use \eqref{est:uhperr} and \eqref{uhpNl2} to obtain
\begin{align*}
& \sum_{k=0}^3 \| t^{\frac{k}{4}} |x|^{-\frac{k-3}{4}} \dx^k \J_+ \uhp \| _{L^2} \\
& \lesssim \sum_{k=0}^3 \Bigg( \sum _{\substack{N \in 2^{\delta \mathbb{Z}} \\ N \ge t^{-\frac{1}{5}}}} \| t^{\frac{k}{4}} |x|^{-\frac{k-3}{4}} \dx^k \J_+ \uhp_N (t) \| _{L^2}^2 \Bigg)^{\frac{1}{2}} \\
& \quad + \sum_{k=0}^3 \sum _{\substack{N \in 2^{\delta \mathbb{Z}} \\ N \ge t^{-\frac{1}{5}}}} \| (1-\PN) t^{\frac{k}{4}} |x|^{-\frac{k-3}{4}} \dx^k \J_+ \uhp_N (t) \| _{L^2} \\
& \lesssim \| u (t) \|_{\wt{X}}.
\end{align*}
This gives \eqref{uhpL2} with $k=0$.
For $k \ge 1$, because
\[
\J_+ \dx^k \uhp
= \dx^k (\J_+ \uhp) + t^{-\frac{k}{4}} |x|^{\frac{k-3}{4}} \sum_{l=0}^{k-1} C_{k,l} t^{\frac{(k-1)+1}{4}} |x|^{-\frac{5(k-1)+1}{4}+l} \dx^l \uhp,
\]
\eqref{uhpL22} yields \eqref{uhpL2}.

For the elliptic bound, we note that
\begin{equation} \label{uepw}
\uep = u^+_{\le t^{-\frac{1}{5}}} + \sum _{\substack{N \in 2^{\delta \mathbb{Z}} \\ N \ge t^{-\frac{1}{5}}}} \uep_N,
\quad u^+_{\le t^{-\frac{1}{5}}} := \sum _{\substack{N \in 2^{\delta \mathbb{Z}} \\ N \le t^{-\frac{1}{5}}}} P_N u^+.
\end{equation}
We use \eqref{est:ueperr} and \eqref{uepNl2} to obtain
\begin{align*}
\sum_{k=0}^3 \| t^{\frac{k+1}{5}} \dx^k \uep \|_{L^2}
& \lesssim t^{\frac{1}{5}} \| u^+_{\le t^{-\frac{1}{5}}} \|_{L^2} + \sum_{k=0}^3 \Bigg( \sum _{\substack{N \in 2^{\delta \mathbb{Z}} \\ N \ge t^{-\frac{1}{5}}}} (t^{\frac{k+1}{5}} N^k \| \uep_N \|_{L^2})^2 \Bigg)^{\frac{1}{2}} \\
& \quad + \sum_{k=0}^3 \sum _{\substack{N \in 2^{\delta \mathbb{Z}} \\ N \ge t^{-\frac{1}{5}}}} t^{\frac{k+1}{5}} \|  (1-\PN) \dx^k \uep_N \|_{L^2} \\
& \lesssim \| u(t) \|_{\wt{X}}.
\end{align*}
From $t^{\frac{k}{4}} |x|^{-\frac{k}{4}+1} N^k \lesssim |x| + tN^4$ and \eqref{est:ueperr2}, we have
\begin{align*}
t^{\frac{k}{4}} \| |x|^{-\frac{k}{4}+1} \dx^k \uep_N \|_{L^2 (|x| \ge t^{\frac{1}{5}})}
& \lesssim N^{-k} \| (|x|+tN^4) \sigma_{>t^{\frac{1}{5}}} (x) \dx^k \uep_N \|_{L^2} \\
& \lesssim \| (|x|+ tN^4) \uep_N \|_{L^2} + N^{-2} \| u_N \|_{L^2}.
\end{align*}
Because
\begin{align*}
t^{\frac{k}{4}} \| |x|^{-\frac{k}{4}+1} \dx^k \uep_{\le t^{-\frac{1}{5}}} \|_{L^2 (|x| \ge t^{\frac{1}{5}})} 
& \lesssim t^{\frac{k}{5}} \| x \dx^k \uep_{\le t^{-\frac{1}{5}}} \|_{L^2 (|x| \ge t^{\frac{1}{5}})} \\
& \lesssim \| x \uep_{\le t^{-\frac{1}{5}}} \|_{L^2 (|x| \ge t^{\frac{1}{5}})} + t^{\frac{1}{5}} \| \uep_{\le t^{-\frac{1}{5}}} \|_{L^2} \\
& \lesssim \| \uep_{\le t^{-\frac{1}{5}}} \|_{\wt{X}},
\end{align*}
by \eqref{est:ueperr2} and \eqref{uepNl2}, we obtain
\begin{align*}
& \| t^{\frac{k+1}{5}} (t^{-\frac{1}{5}} |x|)^{-\frac{k}{4}+1} \dx^k \uep \|_{L^2 (|x| \ge t^{\frac{1}{5}})} \\
& \lesssim \| \uep_{\le t^{-\frac{1}{5}}} \|_{\wt{X}} + \Bigg( \sum _{\substack{N \in 2^{\delta \mathbb{Z}} \\ N \ge t^{-\frac{1}{5}}}} \| (|x|+ tN^4) \uep_N \|_{L^2}^2 \Bigg)^{\frac{1}{2}} + \sum _{\substack{N \in 2^{\delta \mathbb{Z}} \\ N \ge t^{-\frac{1}{5}}}} N^{-2} \| u_N \|_{L^2} \\
& \quad + \sum _{\substack{N \in 2^{\delta \mathbb{Z}} \\ N \ge t^{-\frac{1}{5}}}} \| (1-\PN) t^{\frac{k+1}{5}} (t^{-\frac{1}{5}} |x|)^{-\frac{k}{4}+1} \sigma_{>t^{\frac{1}{5}}} (x) \dx^k \uep_N \|_{L^2} \\
& \lesssim \| u(t) \|_{\wt{X}}.
\end{align*}
\end{proof}

Finally, we show the pointwise decay estimates.

\begin{prop} \label{prop:est|u|}
For $t \ge 1$ and $k=0,1,2,3$, we have
\begin{align}
& | t^{\frac{k+1}{5}} \lr{t^{-\frac{1}{5}} x}^{-\frac{k-1}{4}} \dx^k \uhp (t,x) | \lesssim t^{-\frac{1}{10}} \| u (t) \|_{\wt{X}}, \label{est:uhpp} \\
& | t^{\frac{k+1}{5}} \lr{t^{-\frac{1}{5}} x}^{-\frac{k}{4}+\frac{7}{8}} \dx^k \uep (t,x) | \lesssim t^{-\frac{1}{10}} \| u (t) \|_{\wt{X}}. \label{est:uepp} 
\end{align}
\end{prop}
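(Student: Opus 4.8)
\emph{Overview.} The plan is to deduce these pointwise bounds from the weighted $L^2$ estimates of Corollary~\ref{cor:uL2} by means of weighted Gagliardo--Nirenberg-type inequalities, applied separately to the Littlewood--Paley pieces, exploiting the oscillatory structure of $\uhp$ through the operator $\J_+$ and the spatial localization of $\uep$.

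\emph{Hyperbolic part.} For each fixed $(t,x)$ the function $\uhp(t,x)$ is a sum of at most $\delta^{-1}$ terms $\uhp_N(t,x)$ with $tN^4\sim|x|$, so it suffices to bound each $\uhp_N$ and then apply Cauchy--Schwarz over these finitely many $N$. The key tool is the identity, valid for smooth $g$ supported in $\R_-$,
\[
|g(x)|^2 = 2 t^{-\frac14}\int_{-\infty}^x \Im\big(\ol{g(y)}\,\J_+ g(y)\big)\,dy \le 2 t^{-\frac14}\|g\|_{L^2}\|\J_+ g\|_{L^2},
\]
which follows from $\dx|g|^2 = 2\Re(\ol g\,\dx g)$, the relation $it^{\frac14}\dx g = \J_+ g - |x|^{\frac14}g$, and the fact that $|x|^{\frac14}|g|^2$ is real. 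I would apply it with $g$ equal to a suitable power of $|x|$ times $\dx^k\uhp_N$, chosen so that on the support $|x|\sim tN^4$ the two factors $\|g\|_{L^2}$ and $\|\J_+ g\|_{L^2}$ are controlled by \eqref{uhpL22} and \eqref{uhpL2}; here one expands $\J_+(\mathrm{weight}\cdot\dx^k\uhp_N) = \mathrm{weight}\cdot\J_+\dx^k\uhp_N + (\text{commutator})$, the commutator being a sum of terms carrying extra negative powers of $|x|$ (hence lower order on the support), whose non-frequency-localized part is absorbed by \eqref{est:uhperr}. Bookkeeping the powers of $t$ and $N$ via $tN^4\sim|x|$ and $\|u_N(t)\|_{L^2}\lesssim N\|u_N(t)\|_{\wt{X}}$ then produces exactly the gain $t^{-\frac1{10}}$.

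\emph{Elliptic part.} Here $\uep$ is non-oscillatory and I would instead use $|h(x)|^2\lesssim\|h\|_{L^2}\|\dx h\|_{L^2}$. Splitting $\uep = u^+_{\le t^{-1/5}} + \sum_{N\ge t^{-1/5}}\uep_N$, the low-frequency piece is handled by Bernstein's inequality together with $\|u^+_{\le t^{-1/5}}\|_{L^2} + \|\lr{t^{-1/5}x}\,u^+_{\le t^{-1/5}}\|_{L^2}\lesssim t^{-1/5}\|u(t)\|_{\wt{X}}$, the weighted bound coming from $\J$ and the frequency truncation. To each $\uep_N$ I would apply the Gagliardo--Nirenberg inequality to $h = \lr{t^{-1/5}x}^{-k/4+7/8}\dx^k\uep_N$: the factor $\|h\|_{L^2}$ is controlled by \eqref{uepL2} since $-k/4+7/8\le -k/4+1$, while in $\dx h$ the derivative $\dx^{k+1}\uep_N$ is replaced --- modulo the commutator with the weight and the non-localized remainder estimated by \eqref{est:ueperr} --- by $N\,\dx^k\uep_N$, again controlled by \eqref{uepL2}; the exponent $-k/4+7/8$ is the natural one, being the average of the $L^2$ weight exponents $-k/4+1$ and $-(k+1)/4+1$ attached to $\dx^k\uep$ and $\dx^{k+1}\uep$, and for $k=3$ this replacement of $\dx^4\uep_N$ by $N\,\dx^3\uep_N$ is exactly what keeps us within the range of Corollary~\ref{cor:uL2}. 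This yields a pointwise bound for each $\uep_N$ with an $N$-dependence that still has to be summed.

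\emph{Summation and the main obstacle.} For the hyperbolic part the sum over $N$ is finite at each point, so nothing more is required. The elliptic sum is the delicate step, since $\uep_N$ is not spatially localized: one must exploit the decay $\|(|x|+tN^4)\uep_N\|_{L^2}\lesssim\|u_N(t)\|_{\wt{X}}$ from \eqref{uepNl2}. For fixed $x$ I would split $\sum_N$ according to $tN^4\gtrsim|x|$, where the factor $(tN^4)^{-1}$ provides summability in $N$, and $t^{-1/5}\le tN^4\lesssim|x|$, where one instead uses that $\lr{t^{-1/5}x}\gtrsim1$ supplies the needed room; balancing these two regimes, together with the bookkeeping of weights through the commutators, is the only part requiring genuine care, everything else reducing to Corollary~\ref{cor:uL2} and Bernstein's inequality.
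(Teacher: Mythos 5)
Your proposal follows essentially the same route as the paper's proof: a Gagliardo--Nirenberg/fundamental-theorem inequality exploited through the factorization $\dx(e^{-i\phi}g)=-it^{-\frac{1}{4}}e^{-i\phi}\J_+g$ for each hyperbolic piece $\uhp_N$ (with finite overlap at each point), Bernstein's inequality for the low-frequency part of $\uep$, and for the pieces $\uep_N$ a per-$N$ Gagliardo--Nirenberg bound combined with the weight $(|x|+tN^4)$ from \eqref{uepNl2} and a summation over $N$ split at $tN^4\sim|x|$. The one imprecise point is the regime $tN^4\lesssim|x|$, where summability actually comes from the geometric factor $(N/M)^{k+\frac{1}{2}}$ (the Bernstein cost $N^{k+\frac{1}{2}}$ against $(tM^4)^{-1}\|x\uep_N\|_{L^2}$ with $tM^4\sim|x|$) rather than from $\lr{t^{-\frac{1}{5}}x}\gtrsim 1$ alone, but this is exactly the bookkeeping the paper carries out and does not change the argument.
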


\begin{proof}
The Gagliardo-Nirenberg inequality
\[
|f| \lesssim \| f \|_{L^2}^{\frac{1}{2}} \| \dx f \|_{L^2}^{\frac{1}{2}}
\]
with $f = e^{-i\phi} \uhp_{N}$, $\dx (e^{-i\phi} \uhp) = t^{-\frac{1}{4}} e^{-i\phi} \J_+ \uhp$, Lemma \ref{lem:he_freq_est}, and \eqref{est:uhperr} imply
\begin{align*}
& | t^{\frac{k+1}{5}} \lr{t^{-\frac{1}{5}} x}^{-\frac{k-1}{4}} \dx^k \uhp_N (t,x) | \\
& \lesssim t^{\frac{2}{5}} N^{-k+1} \| \dx^k \uhp_N (t) \|_{L^{\infty}} \\
& \lesssim t^{\frac{11}{40}} N^{-k+1} \| \dx^k \uhp_{N} (t) \|_{L^2}^{\frac{1}{2}} \left\| \J_+ \dx^k \uhp_{N} (t) \right\|_{L^2}^{\frac{1}{2}} \\
& \lesssim t^{-\frac{1}{10}} \| N^{-1} u_{N} (t) \|_{L^2}^{\frac{1}{2}} \| t^{\frac{3}{4}} N^3 \J_+ \uhp_{N} (t) \|_{L^2}^{\frac{1}{2}} + t^{-\frac{1}{10}} \| u(t) \|_{\wt{X}} \\
& \lesssim t^{-\frac{1}{10}} \| u (t) \|_{\wt{X}}.
\end{align*}
Because $\uhp (t,x)$ is a finite sum of $\uhp_N(t,x)$'s, this yields the desired hyperbolic bound.

Next, we show the elliptic bound.
For $|x| \le t^{\frac{1}{5}}$, Bernstein's inequality implies
\[
| t^{\frac{k+1}{5}} \lr{t^{-\frac{1}{5}} x}^{-\frac{k}{4}+\frac{7}{8}} \dx^k u^+_{\le t^{-\frac{1}{5}}} (t,x) | 
\lesssim t^{\frac{1}{10}} \| u^+_{\le t^{-\frac{1}{5}}} (t) \|_{L^2}
\lesssim t^{-\frac{1}{10}} \| u_{\le t^{-\frac{1}{5}}} (t) \|_{\wt{X}}.
\]
Similarly, for $|x| \ge t^{\frac{1}{5}}$, we have
\begin{align*}
| t^{\frac{k+1}{5}} \lr{t^{-\frac{1}{5}} x}^{-\frac{k}{4}+\frac{7}{8}} \dx^k u^+_{\le t^{-\frac{1}{5}}} (t,x) |
& \lesssim t^{\frac{k}{5}} \| x \dx^k u^+_{\le t^{-\frac{1}{5}}} (t) \|_{L^{\infty}} \\
& \lesssim t^{-\frac{1}{10}} \| x u^+_{\le t^{-\frac{1}{5}}} (t) \|_{L^2} + t^{\frac{1}{10}} \| u_{\le t^{-\frac{1}{5}}} (t) \|_{L^2} \\
& \lesssim t^{-\frac{1}{10}} \| u_{\le t^{-\frac{1}{5}}} (t) \|_{\wt{X}}.
\end{align*}

For $|x|\le tN^4$, the Gagliardo-Nirenberg inequality, Lemma \ref{lem:he_freq_est}, and \eqref{est:ueperr} yield
\begin{align*}
| t^{\frac{k+1}{5}} \lr{t^{-\frac{1}{5}} x}^{-\frac{k}{4}+\frac{7}{8}} \dx^k \uep_N (t,x) | 
& \lesssim t^{\frac{9}{10}} N^{-k+\frac{7}{2}} | \sigma_{\le tN^4} \dx^k \uep_N (t,x) | \\
& \lesssim t^{-\frac{1}{10}} \| tN^4 \sigma_{\le tN^4} \uep_N (t) \|_{L^2} + t^{-\frac{1}{10}} \| u_N(t) \|_{\wt{X}} \\
& \lesssim t^{-\frac{1}{10}} \| u_N(t) \|_{\wt{X}}.
\end{align*}
For $|x| \ge tN^4$, there exists $M \in 2^{\delta \mathbb{N}_0}$ such that $\ue_N (t,x) = \sigma_{tM^4}(x) \ue_N (t,x)$.
Lemma \ref{lem:freq_spat_loc} and the same calculation as above lead
\begin{align*}
& | t^{\frac{k+1}{5}} \lr{t^{-\frac{1}{5}} x}^{-\frac{k}{4}+\frac{7}{8}} \dx^k \uep_N (t,x) | \\
& \lesssim t^{\frac{9}{10}} M^{-k+\frac{7}{2}} | \sigma_{tM^4}(x) \dx^k \uep_N (t,x) | \\
& \lesssim t^{\frac{9}{10}} M^{-k+\frac{7}{2}} N^{k+\frac{1}{2}} \| \sigma_{tM^4}(x) \uep_N (t) \|_{L^2} \\
& \quad + t^{-k+\frac{9}{10}} N^{\frac{1}{2}} M^{-5k+\frac{7}{2}} \max (1, tNM^4)^{-2} \| u_N (t) \|_{L^2}.
\end{align*}
Hence, we have
\begin{align*}
& \sum_{N>t^{-\frac{1}{5}}} | t^{\frac{k+1}{5}} \lr{t^{-\frac{1}{5}} x}^{-\frac{k}{4}+\frac{7}{8}} \dx^k \uep_N (t,x) | \\
& \lesssim \sum_{t^{-\frac{1}{5}}<N \le M} t^{-\frac{1}{10}} M^{-k-\frac{1}{2}} N^{k+\frac{1}{2}} \| x \uep_N (t) \|_{L^2} \\
& \quad + \sum_{N > M} t^{-\frac{1}{10}} M^{-k+\frac{7}{2}} N^{k-\frac{7}{2}} \| tN^4 \uep_N (t) \|_{L^2}
+ t^{-\frac{1}{10}} \| u(t) \|_{\wt{X}} \\
& \lesssim t^{-\frac{1}{10}} \| u(t) \|_{\wt{X}}.
\end{align*}
Combining these estimates with \eqref{uepw}, we obtain the desired elliptic bound.
\end{proof}

\begin{rmk}
For $t \ge 1$, the estimate
\[
| t^{\frac{k}{5}+\frac{3}{20}} \lr{t^{-\frac{1}{5}} x}^{-\frac{k}{4}+\frac{3}{8}} \dx^k \uhp (t,x) | \lesssim \| u(t) \|_{L^2} + t^{-\frac{1}{10}} \| u (t) \|_{\wt{X}}
\]
holds true.
Indeed,
\begin{align*}
& | t^{\frac{k}{5}+\frac{3}{20}} \lr{t^{-\frac{1}{5}} x}^{-\frac{k}{4}+\frac{3}{8}} \dx^k \uhp_N (t,x) | \\
& \lesssim t^{\frac{9}{20}} N^{-k+\frac{3}{2}} \| \dx^k \uhp_N (t) \|_{L^{\infty}} \\
& \lesssim t^{\frac{13}{40}} N^{-k+\frac{3}{2}}  \| \dx^k \uhp_{N} (t) \|_{L^2}^{\frac{1}{2}} \left\| \J_+ \dx^k \uhp_{N} (t) \right\|_{L^2}^{\frac{1}{2}} \\
& \lesssim t^{-\frac{1}{20}} \| u_{N} (t) \|_{L^2}^{\frac{1}{2}} \| t^{\frac{3}{4}} N^3 \J_+ \uhp_{N} (t) \|_{L^2}^{\frac{1}{2}} + t^{-\frac{1}{10}} \| u(t) \|_{\wt{X}} \\
& \lesssim \| u(t) \|_{L^2} + t^{-\frac{1}{10}} \| u (t) \|_{\wt{X}}.
\end{align*}
Accordingly, combining this with \eqref{est:uepp} and Remark \ref{rmk:reqH2}, we obtain \eqref{est:u_infty} at $t=1$.
\end{rmk}

\section{Wave packets} \label{S:wave_packet}

For $t \ge 1$ and $\rho \ge 0$, we define
\[
\Or := \{ v \in \R_- \colon |v| \gtrsim t^{-\frac{4}{5}+4\rho} \}.
\]
Setting
\[
\lambda := t^{-\frac{1}{2}} |v|^{-\frac{3}{8}},
\]
we define, for $v \in \Oo$,
\[
\Psi _v(t,x) := \chi \left( \lambda (x-vt) \right) e^{i \phi (t,x)},
\]
where $\chi$ is a sooth function with $\supp \chi \subset [-1+2^{-\delta}, 1-2^{-\delta}]$ and $\int_{\R} \chi (z) dz =1$, and $\phi$ is defined by \eqref{phase}.
The spatial support of $\Psi_v$ is included in $[2^{\delta}vt, \frac{vt}{2^{\delta}}]$.

For $v \in \Oo$,
\[
\partial_t \Psi _v (t,x)
= -\frac{x+vt}{2t} \lambda \chi' ( \lambda (x-vt)) e^{i\phi (t,x)} + i \partial_t \phi (t,x) \chi ( \lambda (x-vt) e^{i\phi (t,x)}.
\]
In the following calculation, we abbreviate $\dx^k (\chi (\lambda (x-vt))$ to $\dx^k \chi$.
By $\partial_t \phi = \frac{1}{5} (\dx \phi)^{5}$, we have
\begin{equation} \label{eq:LPsi}
\begin{aligned}
& (\Lc \Psi_v) (t,x) \\
& = -\frac{x+vt}{2t} \dx \chi e^{i\phi} + i \partial_t \phi \chi e^{i\phi} \\
& \quad -\frac{1}{5} \bigg\{ \chi \dx^{5} \left( e^{i \phi} \right) + 5 \dx \chi \dx^{4} \left( e^{i \phi} \right)  + 10 \dx^2 \chi \dx^{3} \left( e^{i \phi}\right)
+ 10 \dx^3 \chi \dx^{2} \left( e^{i \phi}\right) \\
& \quad + 5 \dx^4 \chi \dx \left( e^{i \phi} \right) + \dx^5 \chi e^{i \phi} \bigg\} \\
& = -\frac{x+vt}{2t} \dx \chi e^{i\phi} + i \partial_t \phi \chi e^{i\phi} \\
& \quad -\frac{1}{5} \bigg\{ \chi \Big( i (\dx \phi)^{5} + 10 (\dx \phi)^{3} \dx^2 \phi \Big)
+ 5 \dx \chi \Big( (\dx \phi)^{4} - 6i (\dx \phi)^{2} \dx^2 \phi \Big) \\
& \quad +10 \dx^2 \chi \Big( -i (\dx \phi)^{3} - 3 \dx \phi \dx^2 \phi \Big)
+ 10 \dx^3 \chi \Big( -(\dx \phi)^{2} \Big) \bigg\} e^{i\phi} \\
& \quad + O \left( t^{-2} |v|^{-\frac{5}{4}} \chi (\lambda (x-tv)) \right) \\
& = \frac{e^{i\phi}}{t \lambda} \dx \wt{\chi} + O \left( t^{-1} (t^{\frac{4}{5}} |v|)^{-\frac{5}{4}} \chi (\lambda (x-tv)) \right),
\end{aligned}
\end{equation}
where
\begin{align*}
\wt{\chi}(t,x) &:= \lambda \frac{x-vt}{2} \chi (\lambda (x-vt)) + 2i \lambda^2 t^{\frac{1}{4}} |x|^{\frac{3}{4}} \chi' (\lambda (x-vt)) \\
&\quad + 2 \lambda^3 t^{\frac{1}{2}} |x|^{\frac{1}{2}} \chi''(\lambda (x-vt))
\end{align*}
has the same localization of $\chi (\lambda (x-vt))$.

We show that $\Psi_v(t,x)$ is essentially frequency localized near $\xi_v := |v|^{\frac{1}{4}}$.
To state more precisely, for $v \in \Oo$, we define by $N_{v} \in 2^{\delta \mathbb{Z}}$ the nearest scaled dyadic number to $\xi_v$.
Then, $\frac{\xi_v}{2^{\delta}} < N_v < 2^{\delta} \xi_v$ holds.

\begin{lem} \label{lem:freq_psi}
For $t \ge 1$ and $v \in \Oo$, we have
\[
\mathcal{F}[ \Psi_v] (t,\xi)
= \frac{1}{2} \lambda^{-1} \chi_1 (\lambda^{-1} (\xi -\xi_v), \lambda^{-1} \xi_v) e^{\frac{1}{5}it \xi^5},
\]
where $\chi_1 (\cdot,a) \in \mathcal{S} (\R)$ for $a \ge 1$ and satisfies
\[
\int_{\R} \chi_1(\zeta, a) d\zeta = 1 + O \left( \frac{1}{a} \right)
\]
and
\begin{equation} \label{est:chi1p}
\sup_{a \ge 1} \sup_{ \zeta \in \R} |\lr{\zeta}^k \dz^l \chi_1 (\zeta, a)| \lesssim_{k,l} 1
\end{equation}
for any $k,l \in \N_0$.
\end{lem}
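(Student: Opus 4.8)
The plan is to evaluate the Fourier integral $\mathcal F[\Psi_v](t,\xi) = \frac{1}{\sqrt{2\pi}}\int_{\R}\chi(\lambda(x-vt))e^{i\phi(t,x)}e^{-ix\xi}\,dx$ directly. Substituting $x = vt + \lambda^{-1}z$ (so the cutoff becomes $\chi(z)$, supported where $|z|\le 1-2^{-\delta}<1\le a$ with $a:=\lambda^{-1}\xi_v = t^{\frac12}|v|^{\frac58}$, the last inequality holding on $\Oo$) and writing $\xi = \xi_v + \lambda\zeta$, $\zeta = \lambda^{-1}(\xi-\xi_v)$, I first record the closed form
\[
\phi(t, vt + \lambda^{-1}z) = \frac{\pi}{4} - \frac{4}{5}a^2\Big(1 - \frac{z}{a}\Big)^{\frac54},
\]
which follows from $\xi_v = |v|^{\frac14}$, $vt<0$, $a^2 = t\xi_v^5$, $\lambda = \xi_v/a$. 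Expanding $-(vt+\lambda^{-1}z)\xi$ the same way, the phase $\phi(t,x)-x\xi$ equals $\tfrac{\pi}{4} - \tfrac{4}{5} a^2(1-z/a)^{\frac54} + a^2 + a\zeta - az - z\zeta$; subtracting the linear-flow phase $\tfrac{1}{5} t\xi^5 = \tfrac{1}{5} a^2(1+\zeta/a)^5$ and regrouping the $z$- and $\zeta$-dependent pieces yields
\[
\phi(t,x) - x\xi - \tfrac{1}{5} t\xi^5 = \tfrac{\pi}{4} + \tfrac{4}{5} a^2 + \Theta_a(z) + \Xi_a(\zeta) - z\zeta,
\]
\[
\Theta_a(z) := -\tfrac{4}{5} a^2\big(1-\tfrac{z}{a}\big)^{\frac54} - az, \qquad
\Xi_a(\zeta) := -2\zeta^2 - \tfrac{2\zeta^3}{a} - \tfrac{\zeta^4}{a^2} - \tfrac{\zeta^5}{5a^3},
\]
a function of $(z,\zeta,a)$ alone. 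Setting $\psi_a(z) := \chi(z)e^{i\Theta_a(z)}$ and $\chi_1(\zeta,a) := 2e^{i(\frac{\pi}{4} + \frac{4}{5} a^2)}e^{i\Xi_a(\zeta)}\widehat{\psi_a}(\zeta)$, the $z$-integral becomes $\sqrt{2\pi}\,\widehat{\psi_a}(\zeta)$ and the claimed identity $\mathcal F[\Psi_v](t,\xi) = \tfrac12\lambda^{-1}\chi_1(\zeta,a)e^{\frac15 it\xi^5}$ drops out. This phase bookkeeping — in particular checking that after subtracting $\tfrac15 t\xi^5$ all $t,v$-dependence collapses to $(\zeta,a)$ — is the computational core of the lemma.

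For the regularity bound I would note that on $\supp\chi$, for $a\ge 1$, the function $\Theta_a'(z) = a[(1-z/a)^{\frac14}-1]$ is bounded and $\Theta_a^{(j)}(z) = O(a^{1-j})$ for $j\ge 2$, so $\psi_a \in C_0^\infty(\R)$ with all $C^k$-norms bounded uniformly in $a$; hence $\widehat{\psi_a}$ obeys $|\lr{\zeta}^N\partial_\zeta^j\widehat{\psi_a}(\zeta)| \lesssim_{N,j} 1$ uniformly in $a\ge 1$. Since $|\chi_1| = 2|\widehat{\psi_a}|$, and differentiating $e^{i\Xi_a(\zeta)}$ in $\zeta$ only brings down polynomials in $\zeta$ of degree $\le 4$ with coefficients $O(1)$ for $a\ge1$, $\partial_\zeta^l\chi_1$ is a finite sum of polynomials in $\zeta$ (of total degree $\le 4l$) times derivatives $\partial_\zeta^{j}\widehat{\psi_a}$ with $j\le l$; absorbing the polynomial factors into the rapid decay of $\widehat{\psi_a}$ gives \eqref{est:chi1p}, and in particular $\chi_1(\cdot,a)\in\mathcal S(\R)$.

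For the mass identity, writing $\Xi_a(\zeta) = -2\zeta^2 + r_a(\zeta)$ with $|r_a(\zeta)| \lesssim \lr{\zeta}^5/a$ for $a\ge1$, and using $|e^{ir_a}-1|\lesssim|r_a|$ together with $\int\lr{\zeta}^5|\widehat{\psi_a}|\,d\zeta\lesssim 1$, one gets $\int_{\R}\chi_1(\zeta,a)\,d\zeta = 2e^{i(\frac{\pi}{4}+\frac{4}{5} a^2)}\big(\int_{\R}e^{-2i\zeta^2}\widehat{\psi_a}(\zeta)\,d\zeta + O(a^{-1})\big)$. The remaining integral is evaluated by Parseval and the Fresnel integral $\mathcal F[e^{-2i(\cdot)^2}](x) = \tfrac12 e^{-i\pi/4}e^{ix^2/8}$, giving $\tfrac12 e^{-i\pi/4}\int\chi(x)e^{i(\Theta_a(x)+x^2/8)}\,dx$; a Taylor expansion shows $\Theta_a(x) + \tfrac18 x^2 = -\tfrac{4}{5} a^2 + O(x^3/a)$ on $\supp\chi$ (the linear and quadratic terms cancel exactly), so with $\int\chi = 1$ this integral is $e^{-i\frac{4}{5} a^2}(1 + O(a^{-1}))$. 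Assembling the factors, $\int_{\R}\chi_1(\zeta,a)\,d\zeta = 1 + O(a^{-1})$. I expect the main obstacle to be none of the individual estimates but rather the careful algebraic verification of the phase cancellations above and the bookkeeping ensuring every remainder is genuinely $O(a^{-1})$; everything else is a routine Taylor expansion or a standard (non)stationary-phase estimate.
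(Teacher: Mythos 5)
Your proposal is correct and follows essentially the same route as the paper: the same change of variables $z=\lambda(x-vt)$, the same extraction of the linear phase $e^{\frac15 it\xi^5}$ leaving a quadratic chirp plus an $O(z^3/a)$ remainder (your $\Theta_a(z)+\tfrac{4}{5}a^2+\tfrac{z^2}{8}$ is exactly the paper's $R(z,a)$), and the same Fresnel-integral evaluation of the normalization integral; your derivation of \eqref{est:chi1p} directly from uniform $C^k$-bounds on the compactly supported $\psi_a$ is, if anything, slightly more economical than the paper's detour through an auxiliary $\eta$-integral representation. The only blemish is the throwaway claim $\Theta_a^{(j)}=O(a^{1-j})$ for $j\ge 2$ (it should be $O(a^{2-j})$; e.g.\ $\Theta_a''(z)=-\tfrac14(1-z/a)^{-3/4}\to-\tfrac14$), but all your argument actually needs is uniform boundedness of each derivative of $\Theta_a$ on $\supp\chi$ for $a\ge1$, which holds.
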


\begin{proof}
From Taylor's theorem, we can write
\begin{align*}
\phi (t,x)
& = \phi (t,vt) + \dx \phi (t,vt) (x-vt) + \frac{1}{2} \dx^2 \phi (t,vt) (x-vt)^2 \\
& \qquad + \int_{vt}^x \frac{(x-y)^2}{2} \dx^3 \phi (t,y) dy \\
& = \frac{\pi}{4} -\frac{4}{5} t \xi_v^5 + \xi_v (x-vt) - \frac{1}{8} \lambda^2 (x-vt)^2 + R \left( \lambda (x-vt), \lambda^{-1} \xi_v \right) ,
\end{align*}
where
\[
R(z,a) := -\frac{3}{32} \frac{z^3}{a} \int _0^1 (1-\theta)^2 \left( -\theta \frac{z}{a}+1 \right)^{-\frac{7}{4}} d\theta .
\]
We note that $R (z,a)$ is well-defined provided that $\max (z,0) < a$.
Changing variable $z= \lambda (x-vt)$, we have
\begin{align*}
& \mathcal{F} [\Psi_v] (t,\xi) \\
& = \frac{1}{\sqrt{2\pi}} \int_{\R} e^{-ix\xi} \chi ( \lambda (x-vt)) e^{i \phi (t,x)} dx \\
& = e^{i \frac{\pi}{4}} \lambda^{-1} e^{\frac{1}{5} it (5 \xi \xi_v^4 - 4 \xi_v^5)} \frac{1}{\sqrt{2\pi}} \int _{\R} e^{-i z \lambda^{-1} (\xi - \xi_v)} e^{-\frac{i}{8} z^2 + i R(z,  \lambda^{-1} \xi_v)} \chi (z) dz  \\
& = \frac{1}{2} \lambda^{-1} \chi_1 \left( \lambda^{-1} (\xi - \xi_v), \lambda^{-1} \xi_v \right) e^{\frac{1}{5}it\xi^5} ,
\end{align*}
where
\[
\chi_1 (\zeta, a) := \sqrt{2} (1+i) e^{-\frac{i}{5} (\zeta^5 a^{-3} + 5 \zeta^4 a^{-2} +10 \zeta^3 a^{-1} + 10\zeta^2)} \mathcal{F} [e^{-\frac{i}{8} z^2 + i R(z, a)} \chi ] (\zeta).
\]
By definition, $\chi_1 (\cdot ,a ) \in \mathcal{S}(\R)$ for $a \ge 1$.
From the Fresnel integrals, 
\[
(1+i) \sqrt{\frac{\pi}{2}} = \int_{\R} e^{i (\eta -\sqrt{2} (\zeta+\frac{z}{4}))^2} d\eta
\]
holds for any $z, \zeta \in \R$.
Accordingly, we have
\begin{align*}
\mathcal{F} [e^{-\frac{i}{8} z^2 + i R(z, a)} \chi ] (\zeta) 
& = e^{2i\zeta^2} \frac{1}{\sqrt{2\pi}} \int_{\R} e^{-2i (\zeta+\frac{z}{4})^2 + iR(z,a)} \chi (z) dz \\
& = \frac{1}{1+i} \sqrt{\frac{2}{\pi}} e^{2i\zeta^2} \frac{1}{\sqrt{2\pi}} \int_{\R} \int_{\R} e^{i(\eta^2 -2 \sqrt{2} \eta (\zeta + \frac{z}{4}))} e^{iR(z,a)} \chi (z) d\eta dz \\
& = \frac{1}{1+i} \sqrt{\frac{2}{\pi}} e^{2i\zeta^2} \int_{\R} e^{i\eta^2} e^{-2\sqrt{2} i \eta \zeta} \wh{\chi_2} \left( \frac{\eta}{\sqrt{2}}, a \right) d\eta,
\end{align*}
where
\[
\chi_2 (\cdot, a) := \chi e^{iR(\cdot,a)} \in \mathcal{S} (\R)
\]
for $a \ge 1$.
Hence, we can write
\begin{equation} \label{eq:chi1}
\chi_1 (\zeta, a)
= 2 \sqrt{\frac{2}{\pi}} e^{-\frac{i}{5} (\zeta^5 a^{-3} + 5 \zeta^4 a^{-2} +10 \zeta^3 a^{-1})} \int_{\R} e^{2i\eta^2} e^{-4 i \eta \zeta} \wh{\chi_2} \left( \eta, a \right) d\eta.
\end{equation}
As $\chi_2 (\cdot, a) \in \mathcal{S} (\R)$, $e^{-\frac{i}{5} (\zeta^5 a^{-3} + 5 \zeta^4 a^{-2} +10 \zeta^3 a^{-1})} = 1 + O \left( \frac{|\zeta|^3}{a} \lr{\zeta}^2 \right)$, and
\begin{equation} \label{eq:chi2}
\sup_{a \ge 1} \sup_{\eta \in \R} |\lr{\eta}^k \de^l \wh{\chi_2} (\eta, a)| \lesssim_{k,l} 1,
\end{equation}
we obtain
\[
\int_{\R} \chi_1 (\zeta,a) d\zeta
= \sqrt{2\pi} \wh{\chi_2} (0,a) +O \left( \frac{1}{a} \right).
\]
Because $e^{i R(z,a)} = 1 + O (\frac{1}{a})$ for $|z| < 1$ and $a \ge 1$, we have
\[
\sqrt{2\pi} \wh{\chi_2} (0,a) = \int_{\R} \chi (z) e^{iR(z,a)} dz = 1+ O \left( \frac{1}{a} \right).
\]
Finally, \eqref{est:chi1p} follows from \eqref{eq:chi1} and \eqref{eq:chi2}.
\end{proof}

The integration by parts with \eqref{est:chi1p} yields
\[
\left| \Big( 1-P_{\frac{N_v}{2^{\delta}} \le \cdot \le 2^{\delta}N_v}^+ \Big) \Psi_v(t,x) \right|
\lesssim_l \min (1, |x|^{-1} t|v|)^2 (\lambda \xi_v^{-1})^l
\]
for any $l \ge 0$, which implies
\begin{equation}
\label{Psi_L1}
\left\| \Big( 1-P_{\frac{N_v}{2^{\delta}} \le \cdot \le 2^{\delta}N_v}^+ \Big) \Psi_v(t) \right\|_{L_x^1}
\lesssim_c t^{\frac{1}{5}} (t^{\frac{4}{5}} |v|)^{-c}
\end{equation}
for $v \in \Oo$ and any $c \ge 0$.

For $v \in \Oo$, we define
\[
\gamma (t,v) := \int_{\R} u (t,x) \overline{\Psi}_v (t,x) dx,
\]
which is a good approximation of $u$ in the following sense.

\begin{prop} \label{prop:approx}
For $t \ge 1$ and $k=0,1,2,3$, we have the bounds
\begin{gather}
\| t^{\frac{k+1}{5}} (t^{\frac{4}{5}} |v|)^{-\frac{k}{4}+\frac{9}{16}} \big( \dx^k u^+ (t,vt) - i^k \lambda |v|^{\frac{k}{4}} e^{i \phi (t,vt)} \gamma (t,v) \big) \|_{L^{\infty}_v (\Oo)}
\lesssim t^{-\frac{1}{10}} \| u(t) \|_{\wt{X}}, \label{ugsp1}
\\
\| t^{\frac{k+3}{5}} (t^{\frac{4}{5}} |v|)^{-\frac{k}{4}+\frac{3}{8}} \big( \dx^k u^+ (t,vt) -  i^k \lambda |v|^{\frac{k}{4}} e^{i \phi (t,vt)} \gamma (t,v) \big) \|_{L^2_v (\Oo)}
\lesssim t^{-\frac{1}{10}} \| u(t) \|_{\wt{X}}. \label{ugsl2}
\end{gather}
Moreover, in the frequency space, we have
\begin{gather*}
\| (t^{\frac{4}{5}} |v|)^{\frac{3}{16}} \big( \wh{u} (t,\xi_v) - 2 e^{\frac{1}{5} it \xi_v^5} \gamma (t,v) \big) \|_{L^{\infty}_v (\Oo)}
\lesssim t^{-\frac{1}{10}} \| u(t) \|_{\wt{X}},
\\
\| t^{\frac{2}{5}} \big( \wh{u} (t,\xi_v) - 2 e^{\frac{1}{5} it \xi_v^5} \gamma (t,v) \big) \|_{L^2_v (\Oo)}
\lesssim t^{-\frac{1}{10}} \| u(t) \|_{\wt{X}}.
\end{gather*}
\end{prop}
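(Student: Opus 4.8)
\emph{Outline of proof.} The plan is to show that testing $u$ against $\Psi_v$ reads off $u^+$ at $x=vt$ (and, on the Fourier side, $\wh u$ at $\xi=\xi_v$), with every correction controlled by $\|u(t)\|_{\wt X}$; the decay estimates of \S\ref{S:KS_type} (Corollary \ref{cor:uL2}, Proposition \ref{prop:est|u|}, Lemma \ref{lem:he_freq_est}) together with the frequency localization of the wave packet (Lemma \ref{lem:freq_psi}, \eqref{Psi_L1}) are exactly what make the corrections summable. First I would reduce to a single frequency-localized hyperbolic piece. Starting from the exact Plancherel identity
\[
\gamma(t,v)=\int_\R\wh u(t,\xi)\,\ol{\wh\Psi_v(t,\xi)}\,d\xi=\tfrac12\lambda^{-1}\int_\R\wh u(t,\xi)\,\ol{\chi_1\!\left(\lambda^{-1}(\xi-\xi_v),\lambda^{-1}\xi_v\right)}\,e^{-\frac15it\xi^5}\,d\xi ,
\]
and noting that $\wh\Psi_v$ is concentrated in $\{\xi\sim\xi_v\}\subset\R_+$, writing $u=u^++\ol{u^+}$ the contribution of $\ol{u^+}$ is harmless, and by \eqref{Psi_L1} together with $\|u(t)\|_{L^\infty}\lesssim D\eps\,t^{-\frac15}$ one may replace $\Psi_v$ by $P^+_{\frac{N_v}{2^\delta}\le\cdot\le2^\delta N_v}\Psi_v$ up to an error $O_c\!\left(D\eps\,(t^{\frac45}|v|)^{-c}\right)$ for every $c\ge0$. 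On $\supp\Psi_v$ one has $|x|\sim t|v|\sim tN_v^4$, so the far dyadic pieces $\uhp_N$ ($N\not\sim N_v$) have spatially disjoint support, only the $O(\delta^{-1})$ pieces with $N\sim N_v$ remain, and the elliptic part near frequency $N_v$ (including the very low frequencies, cf.\ \eqref{uepw}) contributes negligibly because the weight $\langle t^{-\frac15}x\rangle^{\frac78}\sim(t^{\frac45}|v|)^{\frac78}$ is an extra gain in \eqref{uepL2} and \eqref{est:uepp}. It therefore suffices to analyse $\wt\gamma(t,v):=\int_\R\uhp_{N_v}(t,x)\,\ol\Psi_v(t,x)\,dx$.

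For the physical-side statements I would factor out the oscillation on $\supp\Psi_v$: set $w:=e^{-i\phi}\uhp_{N_v}$, so that $\dx(e^{-i\phi}\uhp_{N_v})=-it^{-\frac14}e^{-i\phi}\J_+\uhp_{N_v}$ and Lemma \ref{lem:he_freq_est} give $\|\dx w\|_{L^2(\supp\Psi_v)}\lesssim t^{-\frac14}(t^{\frac34}N_v^3)^{-1}\|u_{N_v}(t)\|_{\wt X}$. Since $\int\chi=1$ and $\supp\chi(\lambda(\cdot-vt))\subset\{|x-vt|\lesssim\lambda^{-1}\}\subset\{|x|\sim t|v|\}$, a first-order Taylor expansion of $w$ about $x=vt$ yields $\wt\gamma(t,v)=\lambda^{-1}w(t,vt)+O\!\left(\lambda^{-\frac32}\|\dx w\|_{L^2(\supp\Psi_v)}\right)$; inserting $\lambda=t^{-\frac12}|v|^{-\frac38}$ and $N_v\sim|v|^{\frac14}$, the remainder is exactly $O\!\left(t^{-\frac34}|v|^{-\frac9{16}}\|u_{N_v}\|_{\wt X}\right)$, i.e.\ precisely the size needed for \eqref{ugsp1} at $k=0$ after multiplication by the weight $t^{\frac15}(t^{\frac45}|v|)^{\frac9{16}}$. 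For $k\ge1$ write $\dx^k\uhp_{N_v}=e^{i\phi}(i\dx\phi)^kw+\big(\text{terms with }\dx^m\phi,\ m\ge2,\ \text{or }\dx^mw,\ m\ge1\big)$, use $\dx\phi(t,vt)=\xi_v=|v|^{\frac14}$ and the bounds $|\dx^m\phi|\lesssim t^{-\frac14}|x|^{\frac54-m}$ on $\supp\Psi_v$ together with \eqref{uhpL22} and \eqref{uhpL2} to absorb the remaining terms, and evaluate at $x=vt$; combining with the $k=0$ identity $\uhp_{N_v}(t,vt)=\lambda e^{i\phi(t,vt)}\gamma(t,v)+\text{(controlled)}$ gives \eqref{ugsp1}, and \eqref{ugsl2} follows by running the same steps in $L^2_v$.

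For the frequency statements I would work directly from the displayed formula: after $\xi=\xi_v+\lambda\zeta$,
\[
\gamma(t,v)=\tfrac12\int_\R F(t,\xi_v+\lambda\zeta)\,\ol{\chi_1(\zeta,\lambda^{-1}\xi_v)}\,d\zeta,\qquad F(t,\xi):=e^{-\frac15it\xi^5}\wh u(t,\xi),
\]
where $\partial_\xi F=-ie^{-\frac15it\xi^5}\wh{\J u}$ and $\|\J u(t)\|_{L^2}\le\|u(t)\|_{\wt X}$ by definition of the norm. Taylor-expanding $F$ about $\xi_v$, using $\int\chi_1(\zeta,a)\,d\zeta=1+O(a^{-1})$ and the decay \eqref{est:chi1p}, one gets $F(t,\xi_v)=2\gamma(t,v)+O\!\left(\tfrac{\lambda}{\xi_v}|\wh u(t,\xi_v)|\right)+O\!\left(\lambda^{\frac12}\|u(t)\|_{\wt X}\right)$, hence $\wh u(t,\xi_v)=2e^{\frac15it\xi_v^5}\gamma(t,v)+\err_\xi$; here $|\wh u(t,\xi_v)|\lesssim\|u^+_{N_v}\|_{L^2}^{\frac12}\|x u^+_{N_v}\|_{L^2}^{\frac12}+(\text{tails})$ with $\|x u^+_{N_v}\|_{L^2}\lesssim\|u\|_{\wt X}$ (from \eqref{uhpL2}, \eqref{uepNl2}) and $\|u^+_{N_v}\|_{L^2}\lesssim|v|^{\frac14}\|u\|_{\wt X}$ (from the weight $\langle t^{\frac15}\dx\rangle^{-1}$ in $\|\cdot\|_{\wt X}$ and $v\in\Oo$), so that $\tfrac\lambda{\xi_v}|\wh u(t,\xi_v)|$ and $\lambda^{\frac12}\|u\|_{\wt X}$ carry the weights $(t^{\frac45}|v|)^{\frac3{16}}$ and $t^{\frac25}$ demanded by the two frequency bounds. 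The $L^2_v$ statements are obtained by the same scheme with the pointwise steps replaced by $L^2$ ones: change variables $v\mapsto\xi_v$ (or $v\mapsto vt$), apply Plancherel and a Schur bound for the kernels built from $\chi_1$, and sum the dyadic pieces $\uhp_N$ in $\ell^2_N$ using their almost-orthogonality in $v$ (frequency $N_v\sim N$ forcing $|v|\sim N^4$). The main obstacle I anticipate is precisely this bookkeeping: verifying that each error --- the Taylor remainders, the frequency/parity tails of the reduction step, and the lower-order terms in $\dx^k(e^{i\phi}w)$ --- meets the sharp weights $(t^{\frac45}|v|)^{-\frac k4+\frac9{16}}$, $(t^{\frac45}|v|)^{-\frac k4+\frac38}$, $(t^{\frac45}|v|)^{\frac3{16}}$ with a genuine factor $t^{-\frac1{10}}$ to spare, \emph{uniformly down to the edge $|v|\sim t^{-\frac45}$ of $\Oo$}, where $\xi_v\sim t^{-\frac15}$, $\langle t^{\frac15}N_v\rangle\sim1$, and the frequency localization is only marginal, so that the $2^\delta$-room built into the cutoffs has to be used.
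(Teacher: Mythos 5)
Your outline follows essentially the same route as the paper: discard $\ol{u^+}$ and the elliptic part using \eqref{Psi_L1} and the weighted decay of \S\ref{S:KS_type}, reduce to the hyperbolic piece at frequency $N_v$, conjugate by $e^{i\phi}$ so that $\dx(e^{-i\phi}\uhp)$ is $t^{-\frac14}e^{-i\phi}\J_+\uhp$, Taylor-expand at $x=vt$ against $\int\chi=1$, and on the Fourier side Taylor-expand $e^{-\frac15it\xi^5}\wh u$ at $\xi_v$ using $\wh{\J u}$ together with $\int\chi_1=1+O(a^{-1})$; your handling of $k\ge1$ by expanding $\dx^k(e^{i\phi}w)$ rather than the paper's iteration of the $\J_+$-factorization inside $\gamma$ is a cosmetic difference.

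Two concrete points need repair. First, your bound for the zeroth-order Fourier correction rests on the claim $\|xu^+_{N_v}\|_{L^2}\lesssim\|u\|_{\wt X}$ ``from \eqref{uhpL2}, \eqref{uepNl2}''. This is false for the hyperbolic part: on $\supp\uhp_{N_v}$ one has $|x|\sim tN_v^4$, so $\|x\uhp_{N_v}\|_{L^2}\sim tN_v^4\|\uhp_{N_v}\|_{L^2}$, and what \eqref{uhpL2} and the $\wt X$-norm control is the combination $\J u=(x+t\dx^4)u$ (equivalently $\partial_\xi\bigl(e^{-\frac15it\xi^5}\wh u\bigr)=-ie^{-\frac15it\xi^5}\wh{\J u}$), not $xu$ alone; in the hyperbolic regime the two summands are individually of size $t|v|^{\frac54}\|u\|_{\wt X}$ and only their cancellation is useful. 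The fix is to apply the Gagliardo--Nirenberg inequality to $e^{-\frac15it\xi^5}\wh u$ rather than to $\wh u$, giving $|\wh u(t,\xi_v)|\lesssim\|u_{N_v}\|_{L^2}^{\frac12}\|\J u\|_{L^2}^{\frac12}\lesssim(t^{\frac45}|v|)^{\frac18}\,t^{-\frac1{10}}\|u\|_{\wt X}$ (or, as in the paper, to absorb $|\wh u(t,\xi_v)|$ into the left-hand side and bound $|\gamma|$ by the already-proved physical-space estimate); your final numerology happens to survive, but the stated justification does not. Second, in the reduction step you bound the mismatched-frequency/parity tails by $\|u\|_{L^\infty}\|(1-P)\Psi_v\|_{L^1}$ with the bootstrap bound $\|u\|_{L^\infty}\lesssim D\eps\,t^{-\frac15}$, producing errors $O(D\eps(t^{\frac45}|v|)^{-c})$; after multiplication by the weight $\lambda^{-1}|v|^{-\frac k4}\cdot t^{\frac{k+1}5}(t^{\frac45}|v|)^{-\frac k4+\frac9{16}}=(t^{\frac45}|v|)^{\frac3{16}}$ this is $O(D\eps)$, which is not of the homogeneous form $t^{-\frac1{10}}\|u(t)\|_{\wt X}$ demanded by the proposition (and needed so that the constants are independent of $D$). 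Use instead the pointwise bounds of Proposition \ref{prop:est|u|}, which give $\|u^+(t)\|_{L^\infty}\lesssim t^{-\frac3{10}}\|u(t)\|_{\wt X}$ and keep every error homogeneous in $\|u(t)\|_{\wt X}$. Finally, when you ``evaluate at $x=vt$'' the lower-order terms containing $\dx^m w$, remember that only $L^2$ bounds on $\J_+\dx^k\uhp$ are available, so the pointwise difference must be routed through the integral against $\chi$ and Cauchy--Schwarz, exactly as in the paper's estimates \eqref{eq:diffw}--\eqref{eq:diffwl2}.
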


\begin{proof}
First, we show that
\begin{equation} \label{eq:L^2v}
\Big\| |v|^{-\frac{3}{8}} \int_{\R} f(t,x) \chi (\lambda (x-vt)) dx \Big\|_{L^2_v (\Oo)}
\lesssim \| f(t,x) \|_{L^2_x (t^{-\frac{1}{5}} |x| \gtrsim 1)}
\end{equation}
holds true.
By changing variable $z = \lambda (x-vt)$,
\[
\text{L.H.S. of \eqref{eq:L^2v}}
= t^{\frac{1}{2}} \Big\| \int_{\R} f(t, t^{\frac{1}{2}} |v|^{\frac{3}{8}} z + vt) \chi (z) dz \Big\|_{L^2_v}.
\]
Setting $\wt{v} = t^{\frac{1}{2}} |v|^{\frac{3}{8}} z + vt$, we note
\begin{align*}
& t^{-\frac{1}{5}} |\wt{v}| = t^{\frac{4}{5}} |v| \Big\{ 1- (t^{\frac{4}{5}} |v|)^{-\frac{5}{8}} z \Big\} \gtrsim 1, \\
& \frac{d \wt{v}}{dv} = t \Big\{ 1-\frac{3}{8} (t^{\frac{4}{5}} |v|)^{-\frac{5}{8}} z \Big\} \gtrsim t,
\end{align*}
for $v \in \Oo$.
Then, we have
\[
\text{L.H.S. of \eqref{eq:L^2v}}
\lesssim \Big\| \int_{\R} f(t, \wt{v}) \chi (z) dz \Big\|_{L^2_{\wt{v}}( t^{-\frac{1}{5}} |\wt{v}|\gtrsim 1)}
\lesssim \| f(t,x) \|_{L^2_x (t^{-\frac{1}{5}} |x| \gtrsim 1)}.
\]

Proposition \ref{prop:est|u|} and \eqref{Psi_L1} imply
\begin{align*}
& \Big| \int_{\R} (\uep (t,x) + \ol{u^+} (t,x)) \overline{\Psi}_v (t,x) dx \Big| \\
& \lesssim \lambda^{-1} (t^{\frac{4}{5}}|v|)^{-\frac{7}{8}} \| (t^{-\frac{1}{5}} |x|)^{\frac{7}{8}} \uep (t) \|_{L^{\infty}} + \| u^+(t) \|_{L_x^{\infty}} \| P^- \Psi_v (t) \|_{L^1} \\
& \lesssim (t^{\frac{4}{5}} |v|)^{-\frac{1}{2}} \cdot t^{-\frac{1}{10}} \| u(t) \|_{\wt{X}}
\end{align*}
for $v \in \Oo$.
From \eqref{Psi_L1}, \eqref{eq:L^2v}, \eqref{uepL2}, and Proposition \ref{prop:est|u|}, we have
\begin{align*}
& \Big\| \int_{\R} (\uep (t,x) + \ol{u^+} (t,x)) \overline{\Psi}_v (t,x) dx \Big\|_{L^2_v (\Oo)} \\
& \lesssim t^{-\frac{3}{8}} \| |x|^{\frac{3}{8}} \uep (t) \|_{L^2_x} + \| (t^{-\frac{1}{5}} |x|)^{\frac{1}{4}} u^+(t) \|_{L_x^{\infty}} \big\| (t^{\frac{4}{5}} |v|)^{-\frac{1}{4}} \| P^- \Psi_v (t) \|_{L^1_x} \big\|_{L^2_v (\Oo)} \\
& \lesssim t^{-\frac{3}{10}} \| \lr{t^{-\frac{1}{5}} x}^{\frac{3}{8}} \uep (t) \|_{L^2} + t^{-\frac{2}{5}} \cdot t^{-\frac{1}{10}} \| u(t) \|_{\wt{X}} \\
& \lesssim t^{-\frac{2}{5}} \cdot t^{-\frac{1}{10}} \| u(t) \|_{\wt{X}}.
\end{align*}
Hence, from $\lambda |v|^{\frac{k}{4}} = t^{-\frac{k+1}{5}} (t^{\frac{4}{5}} |v|)^{\frac{k}{4}-\frac{3}{8}}$, we obtain
\[
i^k \lambda |v|^{\frac{k}{4}} \gamma (t,v)
= i^k \lambda |v|^{\frac{k}{4}} \int_{\R} \uhp (t,x) \overline{\Psi}_v (t,x) dx + R_k(t,v),
\]
where $R_k$ is a function satisfying
\begin{align*}
& \| t^{\frac{k+1}{5}} (t^{\frac{4}{5}} |v|)^{-\frac{k}{4}+\frac{9}{16}} R_k(t,v) \|_{L^{\infty}_v (\Oo)} \lesssim t^{-\frac{1}{10}} \| u(t) \|_{\wt{X}}, \\
& \| t^{\frac{k+3}{5}} (t^{\frac{4}{5}} |v|)^{-\frac{k}{4}+\frac{3}{8}} R_k (t,v) \|_{L^2_v (\Oo)}
\lesssim t^{-\frac{1}{10}} \| u(t) \|_{\wt{X}}.
\end{align*}

Here, \eqref{est:uhpp} yields
\begin{align*}
& |v|^{-\frac{k-1}{4}} \left| \int_{\R} t^{\frac{1}{4}} \left( |x|^{-\frac{1}{4}} - (|v|t)^{-\frac{1}{4}} \right) \dx^k \uhp (t,x)  \ol{\Psi}_v(t,x) dx \right| \\
& \lesssim t^{\frac{k}{5}} (t^{\frac{4}{5}} |v|)^{-\frac{k}{4}-\frac{5}{8}} \int_{\R} |\dx^k \uhp (t,x)| \chi (\lambda (x-vt)) dx \\
& \lesssim (t^{\frac{4}{5}} |v|)^{-\frac{1}{2}} \cdot t^{-\frac{1}{10}} \| u(t) \|_{\wt{X}}.
\end{align*}
By \eqref{eq:L^2v} and \eqref{uhpL22}, we have
\begin{align*}
& \left\| |v|^{-\frac{k-1}{4}} \int_{\R} t^{\frac{1}{4}} \left( |x|^{-\frac{1}{4}} - (|v|t)^{-\frac{1}{4}} \right) \dx^k \uhp (t,x)  \ol{\Psi}_v(t,x) dx \right\|_{L^2_v(\Oo)} \\
& \lesssim t^{-\frac{1}{2}} \left\| |v|^{-\frac{k}{4}-\frac{5}{8}} \int_{\R} \dx^k \uhp (t,x) \chi (\lambda (x-vt)) dx \right\|_{L^2_v(\Oo)} \\
& \lesssim t^{-\frac{1}{2}} \bigg\| \Big( \frac{|x|}{t} \Big)^{-\frac{k+1}{4}} \dx^k \uhp \bigg\|_{L^2_v(\Oo)} \\
& \lesssim t^{-\frac{2}{5}} \cdot t^{-\frac{1}{10}} \| u(t) \|_{\wt{X}}.
\end{align*}
Moreover, \eqref{uhpL2} implies
\begin{align*}
& \left| |v|^{-\frac{k}{4}}\int_{\R} t^{\frac{1}{4}} |x|^{-\frac{1}{4}} \dx (e^{-i \phi} \dx^k\uhp) (t,x) \chi (\lambda (x-vt)) dx \right| \\
& \lesssim t^{\frac{k}{4}} (t|v|)^{-1} \lambda^{-\frac{1}{2}} \| |x|^{-\frac{k-3}{4}} \J_+ \dx^k \uhp \|_{L^2} \\
& \lesssim (t^{\frac{4}{5}} |v|)^{-\frac{13}{16}} \cdot t^{-\frac{1}{10}} \| u(t) \|_{\wt{X}}
\end{align*}
and
\begin{align*}
& \left\| |v|^{-\frac{k}{4}}\int_{\R} t^{\frac{1}{4}} |x|^{-\frac{1}{4}} \dx (e^{-i \phi} \dx^k\uhp) (t,x) \chi (\lambda (x-vt)) dx \right\|_{L^2_v(\Oo)} \\
& \lesssim \| (t^{\frac{4}{5}} |v|)^{-\frac{13}{16}} \|_{L^2_v(\Oo)} \cdot t^{-\frac{1}{10}} \| u(t) \|_{\wt{X}} \\
& \lesssim t^{-\frac{2}{5}} \cdot t^{-\frac{1}{10}} \| u(t) \|_{\wt{X}}.
\end{align*}
Because
\begin{align*}
& \uhp (t,x) \ol{\Psi_v} (t,x) \\
&= -i |v|^{-\frac{1}{4}} \dx \uhp (t,x) \ol{\Psi}_v(t,x) - i t^{\frac{1}{4}} \left( |x|^{-\frac{1}{4}}- (|v|t)^{-\frac{1}{4}} \right) \dx \uhp (t,x) \ol{\Psi}_v (t,v) \\
& \quad +i t^{\frac{1}{4}} |x|^{-\frac{1}{4}} \dx ( \uhp e^{-i\phi}) (t,x) \chi (\lambda (x-vt)),
\end{align*}
we can write
\[
i^k \lambda |v|^{\frac{k}{4}} \gamma (t,v) 
= \lambda \int_{\R} \dx^k\uhp (t,x) \overline{\Psi}_v (t,x) dx + R_k (t,v).
\]
We set $w_k (t,x) := e^{-i\phi (t,x)} \dx^k \uhp (t,x)$.
By \eqref{uepL2}, \eqref{eq:L^2v}, and \eqref{est:uepp}, we have
\begin{align*}
& \dx^k u^+ (t,vt) - i^k \lambda |v|^{\frac{k}{4}} e^{i \phi (t,vt)} \gamma (t,v) \\
& = \dx^k \uhp (t,vt) - \lambda e^{i\phi (t,vt)} \int_{\R} \dx^k \uhp (t,x) \ol{\Psi}_v(t,v) dx + R_k (t,v) \\
& = e^{i\phi (t,vt)} \lambda \int_{\R} \left( w_k (t,vt) - w_k (t,x) \right) \chi (\lambda (x-vt)) dx + R_k (t,v).
\end{align*}
Changing variable $z = \lambda (x-vt)$ and \eqref{uhpL2} imply
\begin{equation} \label{eq:diffw}
\begin{aligned}
& \int_{\R} |w_k (t,vt) - w_k (t,x) | \chi (\lambda (x-vt)) dx \\
& \le \lambda^{-1} \int_{\R} |w_k (t,vt) - w_k (t, \lambda^{-1} z +vt)| \chi (z) dz \\
& = \lambda^{-2} \int_{\R} \left| \int_0^1 \dx w_k (t, vt+(1-\theta) \lambda^{-1} z) d\theta \right| |z| \chi (z) dz \\
& \lesssim t^{-\frac{1}{4}} \lambda^{-\frac{3}{2}} (t|v|)^{\frac{k-3}{4}} \left\| |x|^{-\frac{k-3}{4}} \J_+ \dx^k \uhp (t) \right\|_{L^2} \\
& \lesssim t^{-\frac{k}{5}} (t^{\frac{4}{5}} |v|)^{\frac{k}{4}-\frac{3}{16}} \cdot t^{-\frac{1}{10}} \| u(t) \|_{\wt{X}}.
\end{aligned}
\end{equation}
Similarly, Young's inequality, changing variables $z = \lambda (x-vt)$ and $\wt{v} = vt+(1-\theta) \lambda^{-1} z$, and \eqref{uhpL2} give
\begin{equation} \label{eq:diffwl2}
\begin{aligned}
& \left\| t^{\frac{k+3}{5}} (t^{\frac{4}{5}} |v|)^{-\frac{k}{4}+\frac{3}{8}} \lambda \int_{\R} |w_k (t,vt) - w_k (t,x) | \chi (\lambda (x-vt)) dx \right\|_{L^2_v (\Oo)} \\
& \le t^{\frac{23}{20}} \int_{\R} \int _0^1 \left\| |v|^{\frac{3-k}{4}} | (\J_+ \dx^k \uhp)(t, vt+(1-\theta) \lambda^{-1} z) | \right\|_{L^2_v (\Oo)} \chi (z) d\theta dz  \\
& \le t^{\frac{k}{4}-\frac{1}{10}} \left\| |\wt{v}|^{\frac{3-k}{4}} (\J_+ \dx^k \uhp)(t,\wt{v}) \right\|_{L^2_{\wt{v}}} \\
& \lesssim t^{-\frac{1}{10}} \| u(t) \|_{\wt{X}}.
\end{aligned}
\end{equation}

Next, we consider the frequency space approximation.
Proposition \ref{prop:est|u|} and Lemma \ref{lem:freq_psi} yield
\begin{align*}
& \wh{u}(t,\xi_v) - 2 e^{\frac{1}{5} it\xi_v^5} \gamma (t,v) \\
& = e^{\frac{1}{5} it\xi_v^5} \int_{\R} \big( \wh{u}(t,\xi_v) e^{-\frac{1}{5} it\xi_v^5} - \wh{u}(t,\xi) e^{-\frac{1}{5} it\xi^5} \big) \lambda^{-1} \ol{\chi_1 (\lambda^{-1} (\xi-\xi_v), \lambda^{-1}\xi_v)} d\xi \\
& \quad + O \Big( (t^{\frac{4}{5}} |v|)^{-\frac{5}{8}} | \wh{u} (t,\xi_v)| \Big).
\end{align*}
By changing variable $\zeta = \lambda^{-1} (\xi-\xi_v)$, we have
\begin{align*}
& \left| \int_{\R} \big( \wh{u}(t,\xi_v) e^{-\frac{1}{5} it\xi_v^5} - \wh{u}(t,\xi) e^{-\frac{1}{5} it\xi^5} \big) \lambda^{-1} \ol{\chi_1 (\lambda^{-1} (\xi-\xi_v), \lambda^{-1}\xi_v)} d\xi \right| \\
& \le \int_{\R} |\xi-\xi_v| \int_0^1 | \wh{\J u} (\theta (\xi_v-\xi) + \xi)| d\theta \lambda^{-1} |\chi_1 (\lambda^{-1} (\xi-\xi_v), \lambda^{-1}\xi_v)| d\xi \\
& = \lambda \int_{\R} \int_0^1 | \wh{\J u} ( \xi_v + \lambda \zeta (1-\theta)) | d\theta |\zeta \chi_1 (\zeta, \lambda^{-1} \xi_v)| d\zeta.
\end{align*}
Because $|\wh{u}(t,\xi_v)| \le |\wh{u}(t,\xi_v) - 2 e^{\frac{1}{5} it\xi_v^5} \gamma (t,v)| + 2 |\gamma (t,v)|$ and $\chi_1 (\cdot ,a) \in \mathcal{S} (\R)$ for $a \ge 1$, we have
\begin{align*}
\Big| \wh{u}(t,\xi_v) - 2 e^{\frac{1}{5} it\xi_v^5} \gamma (t,v) \Big|
& \lesssim \lambda^{\frac{1}{2}} \| \J u \|_{L^2} + (t^{\frac{4}{5}} |v|)^{-\frac{5}{8}} | \gamma (t,v) | \\
& \lesssim (t^{\frac{4}{5}} |v|)^{-\frac{3}{16}} \cdot t^{-\frac{1}{10}} \| u(t) \|_{\wt{X}}.
\end{align*}
As in \eqref{eq:diffwl2}, Young's inequality and changing variable $\wt{\mathrm{v}} = \xi_v + \lambda \zeta (1-\theta)$ yield
\begin{align*}
& \left\| \wh{u}(t,\xi_v) - 2 e^{\frac{1}{5} it\xi_v^5} \gamma (t,v) \right\|_{L^2_v (\Oo)} \\
& \lesssim \left\| \lambda \int_{\R} \int_0^1 | \wh{\J u} ( \xi_v + \lambda \zeta (1-\theta)) | d\theta |\zeta \chi_1 (\zeta, \lambda^{-1} \xi_v)| d\zeta \right\|_{L^2_v (\Oo)} \\
& \quad + \| (t^{\frac{4}{5}} |v|)^{-\frac{5}{8}} \wh{u} (t,\xi_v) \|_{L^2_v(\Oo)} \\
& \le t^{-\frac{1}{2}} \left\| (\J u) (t,\wt{\mathrm{v}}) \right\|_{L^2_{\wt{\mathrm{v}}}} + t^{-\frac{1}{2}} \| |\xi|^{-1} \wh{u}(t) \|_{L^2(t^{\frac{1}{5}} |\xi| \gtrsim 1)} \\
& \lesssim t^{-\frac{2}{5}} \cdot t^{-\frac{1}{10}} \| u(t) \|_{\wt{X}}.
\end{align*}
\end{proof}

\section{Proof of the main theorem} \label{S:proof}

We derive an ordinary differential equation with respect to $\gamma$.

\begin{prop} \label{prop:gamma_decay}
Let $u$ be a solution to \eqref{5mKdV} satisfying \eqref{est:u_infty0}.
Then, for $t \ge 1$, we have
\begin{align*}
& \| t (t^{\frac{4}{5}} |v|)^{\frac{3}{16}} \big( \dot{\gamma} (t) + 3i \alpha t^{-1} |\gamma (t)|^2 \gamma (t) \big) \|_{L_v^{\infty} (\Oo)}
\lesssim \eps, \\
& \| t^{\frac{7}{5}} \big( \dot{\gamma} (t) + 3i \alpha t^{-1} |\gamma (t)|^2 \gamma (t) \big) \|_{L_v^2 (\Oo)}
\lesssim \eps,
\end{align*}
where the implicit constants are independent of $D$ and $T$.
\end{prop}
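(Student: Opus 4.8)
The plan is to differentiate $\gamma$ in time, substitute \eqref{5mKdV}, and isolate the resonant part of the cubic nonlinearity. Writing $\gamma(t,v)=\int_{\R}u\,\ol{\Psi}_v\,dx$, using $\Lc=\dt-\tfrac15\dx^5$ so that $\partial_t\ol{\Psi}_v=\ol{\Lc\Psi_v}+\tfrac15\dx^5\ol{\Psi}_v$, and performing five integrations by parts so that the two $\tfrac15\dx^5$ contributions cancel, I obtain
\[
\dot{\gamma}(t,v)=\int_{\R}(\Lc u)\,\ol{\Psi}_v\,dx+\int_{\R}u\,\ol{\Lc\Psi_v}\,dx .
\]
The second integral is the linear remainder. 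By \eqref{eq:LPsi} it equals $\frac{1}{t\lambda}\int_{\R}u\,e^{-i\phi}\dx\ol{\wt{\chi}}\,dx$ plus a contribution of size $t^{-1}(t^{\frac45}|v|)^{-\frac54}$ tested against $\chi(\lambda(\cdot-vt))$; since $\dx\wt{\chi}$ has zero mean and $\wt{\chi}$ is supported like $\chi(\lambda(\cdot-vt))$, an integration by parts turns it into $-\frac{1}{t\lambda}\int_{\R}\dx(e^{-i\phi}u)\,\ol{\wt{\chi}}\,dx$, where on the hyperbolic part $\dx(e^{-i\phi}\uhp)=-it^{-\frac14}e^{-i\phi}\J_+\uhp$ is controlled by \eqref{uhpL2} while the elliptic and negative-frequency parts ($u^-=\ol{u^+}$) are handled by the pointwise decay of Proposition \ref{prop:est|u|}. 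Together with Lemma \ref{lem:Xtilde} (which gives $\|u(t)\|_{\wt{X}}\lesssim\eps t^{\frac1{10}}$), this whole remainder obeys the claimed $L_v^{\infty}(\Oo)$ and $L_v^{2}(\Oo)$ bounds.

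For the nonlinear term I would write $\int_{\R}(\Lc u)\,\ol{\Psi}_v\,dx=-\int_{\R}N(u)\,\dx\ol{\Psi}_v\,dx$ with $N(u)=\alpha(2u^2\dx^2u+3u(\dx u)^2)+\beta u^5$ and $\dx\ol{\Psi}_v=(\lambda\chi'-i(\dx\phi)\chi)e^{-i\phi}$, the $\lambda\chi'$ term carrying an extra power of $\lambda$ and going into the error. Decomposing $u=\uhp+\uep+\ol{\uhp}+\ol{\uep}$, the elliptic parts decay faster in the oscillatory region by \eqref{est:uepp} and \eqref{uepL2} and are discarded into the error; among the cubic and quintic monomials in $\uhp,\ol{\uhp}$, only the balanced ones (two copies of $\uhp$ and one of $\ol{\uhp}$ in the cubic case, three and two in the quintic case) have vanishing net phase against $e^{-i\phi}$, whereas the others carry an oscillation $e^{\pm2i\phi},e^{\pm4i\phi}$ with $\dx\phi\sim\xi_v\neq0$ on the support, so repeated integration by parts in $x$ --- each step gaining $\lambda\xi_v^{-1}$, or a factor $t^{-\frac14}$ via \eqref{uhpL2} when $\dx$ falls on $\uhp$ --- turns them into errors too.

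It then remains to evaluate the balanced cubic monomials. I substitute the wave-packet approximation $\dx^k\uhp(t,x)\approx i^k\xi_v^k\lambda e^{i\phi(t,x)}\gamma(t,v)$ from Proposition \ref{prop:approx}, the variation of $e^{-i\phi}\uhp$ across the packet being controlled by $\J_+$ as in \eqref{uhpL22} and \eqref{eq:diffw}--\eqref{eq:diffwl2}. Tracking the coefficients, the balanced part of $N(u)$ is $-3\alpha\xi_v^2\lambda^3e^{i\phi}|\gamma|^2\gamma$ up to errors; multiplying by $i(\dx\phi)\chi e^{-i\phi}$, using $\dx\phi\approx\xi_v$, $\int_{\R}\chi(\lambda(x-vt))\,dx=\lambda^{-1}$, and $\xi_v^3\lambda^2=t^{-1}$, produces exactly $-3i\alpha t^{-1}|\gamma|^2\gamma$, whereas the balanced quintic monomials only yield a term of size $\lesssim\xi_v\lambda^4|\gamma|^5=t^{-2}|v|^{-\frac54}|\gamma|^5$, which on $\Oo$ is dominated by the target rate and hence is an error. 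Estimating the main term and all errors in $L_v^{\infty}(\Oo)$ and $L_v^{2}(\Oo)$ through Proposition \ref{prop:est|u|}, Proposition \ref{prop:approx}, Corollary \ref{cor:uL2}, Lemma \ref{lem:Xtilde}, and the bootstrap \eqref{est:u_infty0} (so that $D$ is absorbed by surplus powers of $\eps$) finishes the proof.

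The main obstacle I expect is the bookkeeping and estimation of the numerous error terms --- the linear remainder, the elliptic parts, the non-resonant cubic and quintic monomials, the variation of $\uhp$ over the wave packet, and the replacement errors from Proposition \ref{prop:approx} --- several of which are products of a main-size factor and a small one, so they must be handled by pairing the pointwise decay of Proposition \ref{prop:est|u|} with the $L^2$-bounds of Corollary \ref{cor:uL2}; forcing every power of $t$ and of $t^{\frac45}|v|$ to meet the sharp rates $t^{-1}(t^{\frac45}|v|)^{-\frac3{16}}$ and $t^{-\frac75}$, where the parameter $\delta$ and the $2^{\delta}$-room in the cutoffs and in $\supp\chi$ are used to absorb the borderline losses, is where most of the work lies.
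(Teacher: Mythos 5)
Your proposal follows the paper's proof in all essentials: differentiate $\gamma$, treat $\int u\,\Lc\ol{\Psi}_v\,dx$ via \eqref{eq:LPsi} together with $\dx(e^{-i\phi}\uh)=-it^{-\frac14}e^{-i\phi}\J_+\uh$ and the bounds of Corollary \ref{cor:uL2}, discard the elliptic pieces and the quintic term using Proposition \ref{prop:est|u|} and the bootstrap, and evaluate the resonant cubic monomials through the wave-packet approximation of Proposition \ref{prop:approx}; your coefficient count (balanced part of $\alpha(2u^2\dx^2u+3u(\dx u)^2)$ equal to $-3\alpha\xi_v^2\lambda^3e^{i\phi}|\gamma|^2\gamma$, hence $-3i\alpha t^{-1}|\gamma|^2\gamma$ after pairing with $i(\dx\phi)\ol{\Psi}_v$ and using $\xi_v^3\lambda^2=t^{-1}$) agrees with the paper's, which obtains the same constant as $i\alpha(-4-2-3+6)t^{-1}|\gamma|^2\gamma$ after splitting $\uh=\uhp+\ol{\uhp}$. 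The one genuinely different sub-step is the removal of the non-resonant cubic monomials: you propose non-stationary phase, integrating by parts against $e^{\pm2i\phi},e^{\pm4i\phi}$ with each step gaining $\lambda\xi_v^{-1}=(t^{\frac45}|v|)^{-\frac58}$ (or a $\J_+$ factor when the derivative falls on $e^{-i\phi}\dx^k\uhp$), whereas the paper exploits that $\uhp_N$ is essentially frequency-localized near $N_v$ while $\ol{\Psi}_v$ lives at frequency $-\xi_v$, so an unbalanced product pairs with $\Psi_v$ only through the frequency tails $(1-P^+_{\frac{N}{2^{\delta}}\le\cdot\le2^{\delta}N})\uhp_N$, which \eqref{est:uhperr} controls. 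The two are equivalent in strength --- a single integration by parts already yields $(t^{\frac45}|v|)^{-\frac58}\le(t^{\frac45}|v|)^{-\frac3{16}}$ pointwise and integrates to $t^{-\frac25}$ in $L^2_v(\Oo)$, so ``repeated'' integration by parts is unnecessary --- but the paper's version avoids having derivatives land on $(\dx\phi)^{-1}$ and on the profiles, at the cost of invoking Lemma \ref{lem:freq_spat_loc}; your route trades that machinery for the extra bookkeeping you already anticipate, handled by pairing the pointwise bootstrap bounds with the $L^2$ bounds \eqref{uhpL2} and Cauchy--Schwarz over the packet support of length $\lambda^{-1}$.
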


\begin{proof}
We use $\err$ to denote error terms that satisfy the estimates
\[
\| t (t^{\frac{4}{5}} |v|)^{\frac{3}{16}} \err \|_{L^{\infty}_v (\Oo)} \lesssim \eps, \quad
\| t^{\frac{7}{5}} \err \|_{L^2_v (\Oo)} \lesssim \eps.
\]
Then, for any $k=0,1,2,3$, we have
\begin{equation} \label{est:errue}
t^{-1} |v|^{-\frac{k}{4}} \int_{\R} |\dx^k \uep (t,x)| \chi (\lambda (x-vt)) dx = \err.
\end{equation}
Indeed, \eqref{est:uepp} shows
\begin{align*}
& t^{-1} |v|^{-\frac{k}{4}} \int_{\R} |\dx^k \uep (t,x)| \chi (\lambda (x-vt)) dx \\
& \lesssim t^{-1} (t^{\frac{4}{5}} |v|)^{-\frac{1}{2}} \sup_{x \in \R} \left| t^{\frac{k+1}{5}} \lr{t^{-\frac{1}{5}} x}^{-\frac{k}{4}+\frac{7}{8}} \dx^k \uep (t,x)  \right| \\
& \lesssim t^{-1} (t^{\frac{4}{5}} |v|)^{-\frac{1}{2}} \cdot t^{-\frac{1}{10}} \| u (t) \|_{\wt{X}}.
\end{align*}
From \eqref{uepL2} and \eqref{eq:L^2v}, we have
\begin{align*}
& \left\| t^{-1} |v|^{-\frac{k}{4}} \int_{\R} |\dx^k \uep (t,x)| \chi (\lambda (x-vt)) dx \right\|_{L^2_v(\Oo)} \\
& \lesssim t^{-\frac{3}{2}} \| t^{\frac{k+1}{5}} \lr{t^{-\frac{1}{5}} x}^{-\frac{k}{4}+\frac{3}{8}} \dx^k \uep (t) \|_{L^2} \\
& \lesssim t^{-\frac{7}{5}} \cdot t^{-\frac{1}{10}} \| u (t) \|_{\wt{X}}.
\end{align*}
Owing to Lemma \ref{lem:Xtilde}, they are error terms.

Because $e^{i\phi (t,x)} \wt{\chi}$ has the same localization property as $\Psi_v(t,x)$, from \eqref{eq:LPsi}, \eqref{Psi_L1}, and Proposition \ref{prop:est|u|}, we have
\begin{align*}
& \left| \int_{\R} u \Lc \ol{\Psi}_v (t,x) dx \right| \\
& \lesssim t^{-\frac{4}{5}} (t^{\frac{4}{5}} |v|)^{\frac{3}{8}} \left| \int_{\R} e^{-i\phi (t,x)} u^+ (t,x) \dx \overline{\wt{\chi}} (t,x) dx \right| \\
& \quad + t^{-\frac{4}{5}} (t^{\frac{4}{5}} |v|)^{\frac{3}{8}} \left| \int_{\R} e^{-i\phi (t,x)} \ol{u^+ (t,x)} \dx \overline{\wt{\chi}} (t,x) dx \right| \\
& \quad + t^{-1} (t^{\frac{4}{5}} |v|)^{-\frac{5}{4}} \int_{\R} |u(t,x) \chi (\lambda (x-vt))| dx \\
& \lesssim t^{-\frac{21}{20}} (t^{\frac{4}{5}} |v|)^{\frac{3}{8}} \int_{\R} |\J_+ \uhp (t,x) \wt{\chi} (t,x)| dx
+ t^{-1} \int_{\R} |\uep (t,x) \wt{\chi} (t,x)| dx \\
& \quad + t^{-1} (t^{\frac{4}{5}} |v|)^{-\frac{9}{8}} \cdot t^{-\frac{1}{10}} \| u (t) \|_{\wt{X}}.
\end{align*}
From \eqref{uhpL2}, we obtain
\begin{align*}
& t^{-\frac{21}{20}} (t^{\frac{4}{5}} |v|)^{\frac{3}{8}} \int_{\R} |\J_+ \uhp (t,x) \wt{\chi} (t,x)| dx \\
& \lesssim t^{-\frac{6}{5}} (t^{\frac{4}{5}} |v|)^{-\frac{3}{8}}  \| |x|^{\frac{3}{4}} \J_+ \uhp (t) \|_{L^2} \| \wt{\chi} (t) \|_{L^2} \\
& \lesssim t^{-1} (t^{\frac{4}{5}} |v|)^{-\frac{3}{16}} \cdot t^{-\frac{1}{10}} \| u(t) \|_{\wt{X}}.
\end{align*}
Similarly, \eqref{uhpL2} and \eqref{eq:L^2v} imply
\begin{align*}
\left\| t^{-\frac{21}{20}} (t^{\frac{4}{5}} |v|)^{\frac{3}{8}} \int_{\R} |\J_+ \uhp (t,x) \wt{\chi} (t,x)| dx \right\|_{L^2_v (\Oo)}
& \lesssim t^{-\frac{3}{2}} \| |x|^{\frac{3}{4}} \J_+ \uhp \|_{L^2} \\
& \lesssim t^{-\frac{7}{5}} \cdot t^{-\frac{1}{10}} \| u(t) \|_{\wt{X}}.
\end{align*}

We note that
\begin{equation} \label{intpsi}
\dx \Psi_v(t,v) = i|v|^{\frac{1}{4}} \Psi_v(t,x) + \lambda \wt{\Psi}_v(t,x),
\end{equation}
where $\wt{\Psi}_v(t,v) = \big\{ \chi' (\lambda (x-vt)) +i t^{-\frac{1}{4}} (|x|^{\frac{1}{4}}- (t|v|)^{\frac{1}{4}}) \lambda^{-1} \chi (\lambda (x-vt)) \big\} e^{i \phi (t,x)}$ has the same localization of $\Psi_v(t,x)$.
Hence, we can write
\begin{align*}
\dot{\gamma}(t,v)
& = \int_{\R} ( \Lc u \cdot \overline{\Psi}_v + u \Lc \overline{\Psi}_v ) (t,x) dx \\
& = \int_{\R} \dx \Big\{ \alpha \Big( 2u^2 \dx^2u + 3u (\dx u)^2 \Big) + \beta u^5 \Big\} \ol{\Psi}_v (t,x) dx + \err. \\
& = i \alpha |v|^{\frac{1}{4}} \int_{\R} \Big( 2u^2 \dx^2u + 3u (\dx u)^2 \Big) \ol{\Psi}_v (t,x) dx \\
& \quad - i \alpha \lambda \int_{\R} \Big( 2u^2 \dx^2u + 3u (\dx u)^2 \Big) \ol{\wt{\Psi}}_v (t,x) dx \\
& \quad + 5 \beta \int_{\R} u^4 \dx u \ol{\Psi}_v(t,x) dx + \err.
\end{align*}
The bootstrap assumption \eqref{est:u_infty0} yields
\begin{gather*}
\lambda \left| \int_{\R} \Big( 2u^2 \dx^2u + 3u (\dx u)^2 \Big) \ol{\wt{\Psi}}_v (t,x) dx \right|
\lesssim t^{-1} (t^{\frac{4}{5}} |v|)^{-\frac{5}{8}} (D\eps)^3
\lesssim t^{-1} (t^{\frac{4}{5}} |v|)^{-\frac{5}{8}} \eps,
\\
\left| \int_{\R} u^4 \dx u \ol{\Psi}_v(t,x) dx \right|
\lesssim t^{-1} (t^{\frac{4}{5}} |v|)^{-\frac{5}{4}} (D\eps)^5
\lesssim t^{-1} (t^{\frac{4}{5}} |v|)^{-\frac{5}{4}} \eps.
\end{gather*}
We therefore arrive at
\[
\dot{\gamma}(t) = i \alpha |v|^{\frac{1}{4}} \int_{\R} \Big( 2u^2 \dx^2u + 3u (\dx u)^2 \Big) \ol{\Psi}_v (t,x) dx + \err.
\]
We divide $u$ into $u= u^+ + \ol{u^+}$ and $u^+ = \uhp+\uep$.
If at least one of $u$ on the right hand side is $\uep$ or $\ol{\uep}$, then the right hand side is an error term because of \eqref{est:u_infty0} and \eqref{est:errue}. 
Accordingly, setting $\uh = \uhp + \ol{\uhp}$, we have
\begin{align*}
\dot{\gamma}(t) & = i \alpha |v|^{\frac{1}{4}} \int_{\R} \Big\{ 2(\uh)^2 \dx^2 \uh + 3 \uh (\dx \uh)^2 \Big\} \ol{\Psi}_v (t,x) dx + \err.
\end{align*}
Here, we observe that for $\frac{|v|}{2^{\delta}} \le \frac{|x|}{t} \le 2^{\delta}|v|$,
\begin{align*}
\uhp
& = \sum_{\substack{N \in 2^{\delta \mathbb{Z}} \\ \frac{N_v}{\sqrt{3} 2^{2\delta}} \le N \le \sqrt{3} 2^{2\delta} N_v}} \uhp_N \\
& = P_{\frac{N_v}{\sqrt{3} 2^{3\delta}} \le \cdot \le \sqrt{3} 2^{3\delta} N_v}^+ \uhp + \sum_{\substack{N \in 2^{\delta \mathbb{Z}} \\ \frac{N_v}{\sqrt{3} 2^{2\delta}} \le N \le \sqrt{3} 2^{2\delta} N_v}} (1-P_{\frac{N}{2^{\delta}} \le \cdot \le 2^{\delta} N}^+) \uhp_N .
\end{align*}
If the frequency supports of
\begin{align*}
&
\begin{aligned}
& (\uh)^2 \dx^2 \uh - 2 |\uhp|^2 \dx^2 \uhp - (\uhp)^2 \ol{\dx^2\uhp} \\
&
= (\uhp)^2 \dx^2 \uhp + 2 |\uhp|^2 \ol{\dx^2 \uhp} + \ol{\uhp}^2 \dx^2 \uhp + \ol{\uhp}^2 \ol{\dx^2 \uhp},
\end{aligned}
\\
&
\begin{aligned}
& \uh (\dx \uh)^2 - \ol{\uhp} (\dx \uhp)^2 - 2 \uhp |\dx \uhp|^2 \\
& = \uhp (\dx \uhp)^2 + 2 \ol{\uhp} |\dx \uhp|^2 + \uhp \ol{\dx \uhp}^2 + \ol{\uhp} \ol{\dx \uhp}^2
\end{aligned}
\end{align*}
are contained in $[\frac{N_v}{2^{2\delta}}, 2^{2\delta} N_v]$, then at least one of $\uhp$ on the right hand side is $(1-P_{\frac{N}{2^{\delta}} \le \cdot \le 2^{\delta} N}^+) \uhp_N$.
Hence, \eqref{est:uhperr} implies
\begin{align*}
\dot{\gamma}(t)
& = i \alpha |v|^{\frac{1}{4}} \int_{\R} \Big\{ 4 |\uhp|^2 \dx^2 \uhp + 2 (\uhp)^2 \ol{\dx^2 \uhp} + 3 \ol{\uhp} (\dx \uhp)^2 \\
& \hspace*{80pt} + 6 \uhp |\dx \uhp|^2 \Big\} \ol{\Psi}_v (t,x) dx + \err.
\end{align*}

We set $w_k (t,x) := e^{-i\phi (t,x)} \dx^k \uhp (t,x)$.
Owing to \eqref{est:u_infty0}, \eqref{ugsp1}, \eqref{eq:diffw}, Lemma \ref{lem:Xtilde} and \eqref{ugsl2}, \eqref{eq:diffwl2}, we obtain
\begin{align*}
& |v|^{\frac{1}{4}} \bigg| \int_{\R} |\uhp(t,x)|^2 \dx^2 \uhp(t,x) \ol{\Psi}_v (t,x) dx \\
& \hspace*{80pt} - \lambda^2 |\gamma (t,v)|^2 \int_{\R} \dx^2 \uhp(t,x)  \ol{\Psi}_v (t,x) dx \bigg| \\
& \lesssim |v|^{\frac{1}{4}} \left| \int_{\R} (|\uhp(t,x)|^2 - |\uhp (t,vt)|^2) \dx^2 \uhp(t,x)  \ol{\Psi}_v (t,x) dx \right| \\
& \quad + |v|^{\frac{1}{4}} \left| \int_{\R} (|\uhp (t,vt)|^2 - \lambda^2 |\gamma (t,v)|^2) \dx^2 \uhp(t,x)  \ol{\Psi}_v (t,x) dx \right| \\
& \lesssim t^{-1} \eps \bigg( \int_{\R} |w_0(t,x)-w_0(t,vt)| \chi (\lambda (x-vt)) dx  \\
& \hspace*{70pt} + t^{\frac{1}{5}} (t^{\frac{4}{5}} |v|)^{\frac{3}{8}} |\uhp (t,vt)- \lambda e^{i \phi (t,vt)} \gamma (t,v)| \bigg) \\
& = \err.
\end{align*}
Moreover, integration by parts and \eqref{intpsi} lead
\begin{align*}
& |v|^{\frac{1}{4}} \int_{\R} |\uhp(t,x)|^2 \dx^2 \uhp(t,x) \ol{\Psi}_v (t,x) dx \\
& = |v|^{\frac{1}{4}} \lambda^2 |\gamma (t,v)|^2 \int_{\R} \dx^2 \uhp(t,x)  \ol{\Psi}_v (t,x) dx + \err \\
& = -t^{-1} |\gamma (t,v)|^2 \gamma (t,v) + \err.
\end{align*}
From the same manner, we have
\begin{align*}
& |v|^{\frac{1}{4}} \int_{\R} (\uhp)^2 \ol{\dx^2 \uhp} \ol{\Psi}_v (t,x) dx= -t^{-1} |\gamma (t,v)|^2 \gamma (t,v) + \err, \\
& |v|^{\frac{1}{4}} \int_{\R} \ol{\uhp} (\dx \uhp)^2 \ol{\Psi}_v (t,x) dx= -t^{-1} |\gamma (t,v)|^2 \gamma (t,v) + \err, \\
& |v|^{\frac{1}{4}} \int_{\R} \uhp |\dx \uhp|^2 \ol{\Psi}_v (t,x) dx= t^{-1} |\gamma (t,v)|^2 \gamma (t,v) + \err,
\end{align*}
which imply that
\[
\dot{\gamma}(t) = -3i \alpha t^{-1} |\gamma (t,v)|^2 \gamma (t,v) + \err.
\]
\end{proof}

First, we show global existence of the solution to \eqref{5mKdV}.
From Proposition \ref{prop:WP} and Lemma \ref{lem:energy}, this is reduced to showing \eqref{est:u_infty}, that is to say, to close the bootstrap estimate \eqref{est:u_infty0}.
In the case $t^{-\frac{1}{5}} |x| \lesssim 1$, the bootstrap assumption \eqref{est:u_infty0}, Lemma \ref{lem:Xtilde}, and \eqref{est:uepp} yield
\[
\| \lr{t^{-\frac{1}{5}}x}^{-\frac{k}{4}+\frac{3}{8}} \dx^k u(t) \|_{L^{\infty}(t^{-\frac{1}{5}} |x| \lesssim 1)} \lesssim \eps t^{-\frac{k+1}{5}}.
\]
For the case $t^{-\frac{1}{5}} |x| \gtrsim 1$, owing to \eqref{ugsp1}, it is reduced to showing that
\[
\| \gamma (t) \|_{L^{\infty}_v(\Oo)} \lesssim \eps,
\]
where the implicit constant is independent of $D$ and $T$.

Let $C$ is a constant such that $v \in \Oo$ for $t \ge \max (1, C|v|^{-\frac{5}{4}})$.
For $|v| \ge C^{\frac{4}{5}}$, the Gagliardo-Nirenberg inequality, Proposition \ref{prop:WP} and Lemma \ref{lem:freq_psi} lead
\[
| \gamma (1,v) |
\lesssim \| \wh{u} (1) \|_{L^{\infty}}
= \| e^{\frac{1}{5}i\xi^5} \wh{u} (1) \|_{L^{\infty}}
\lesssim \| u(1) \|_{L^2}^{\frac{1}{2}} \| \J u(1) \|_{L^2}^{\frac{1}{2}}
\lesssim \eps.
\]
Solving the ordinary differential equation in Proposition \ref{prop:gamma_decay} with the initial time $t=1$, we have
\[
\gamma (t,v) = \gamma (1,v) e^{-3i\alpha |\gamma (1,v)|^2 \log t} + O \left( \eps (t^{\frac{4}{5}}|v|)^{-\frac{3}{16}} \right),
\]
which implies
\[
|\gamma (t,v) | \lesssim \eps
\]
for $t \in [1,T]$.

When $|v| < C^{\frac{4}{5}}$, let $t_0>1$ be $t_0 := C|v|^{-\frac{5}{4}}$.
Then, Bernstein's inequality, \eqref{Psi_L1}, and Lemma \ref{lem:Xtilde} yield
\[
|\gamma (t_0,v) |
\lesssim t_0^{\frac{1}{10}} \sum_{N \sim t_0^{-\frac{1}{5}}} \| u_N (t_0) \|_{L^2} + \eps
\lesssim \eps.
\]
Solving the ordinary differential equation in Proposition \ref{prop:gamma_decay} with the initial time $t=t_0$, we have
\[
\gamma (t,v) = \gamma (t_0,v) e^{-3i\alpha |\gamma (t_0,v)|^2 \log (t_0^{-1} t)} + O \left( \eps \right),
\]
which implies
\[
|\gamma (t,v) | \lesssim \eps
\]
for $t \in [t_0,T]$.
Accordingly, we conclude that \eqref{est:u_infty} holds for any $t \in [1,T]$.

Second, we focus on the asymptotic behavior of the global solution.
The estimates in the decaying region $\Xp$ follow from Lemma \ref{lem:Xtilde}, \eqref{uepL2}, and \eqref{est:uepp}.
Proposition \ref{prop:gamma_decay} yields that there exists a unique (complex valued) function $W$ defined on $(0,\infty)$ such that for $t \ge 1$,
\begin{equation} \label{gammaW}
\gamma (t,v) = \frac{1}{2} W(\xi_v) e^{-\frac{3}{4}i \alpha |W(\xi_v)|^2 \log (t |v|^{\frac{5}{4}})} + \wt{R}(t,v),
\end{equation}
where
\[
\| (t^{\frac{4}{5}} |v|)^{\frac{3}{16}} \wt{R} (t,v) \|_{L^{\infty} (\Oo)} + \| t^{\frac{2}{5}} \wt{R} (t,v) \|_{L^{\infty} (\Oo)} 
\lesssim \eps.
\]
We extend $W$ to $\R$ by defining
\[
W(-\xi) = \ol{W(\xi)}, \quad
W(0) = \int_{\R} u_0(x) dx.
\]
Then, we have
\[
\| W \|_{L^{\infty}} \le \eps.
\]
Proposition \ref{prop:approx} and \eqref{gammaW} show the estimates in $\Xn$.
Finally, we derive the asymptotic behavior in the self-similar region $\Xz$.
We use the self-similar change of variables \eqref{selfsimilar}.
Let $\rho >0$ be small specified later and let $0<C \ll 1$.
From \eqref{eq:U}, Bernstein's inequality, Lemmas \ref{lem:energy}, \ref{lem:Xtilde}, and \eqref{uepNl2}, we have
\begin{align*}
\| \dt P_{\le C t^{\rho}} U \|_{L^{\infty}_y (\Orz)}
& \lesssim t^{\frac{\rho}{2}} \| \dt P_{\le C t^{\rho}} U \|_{L^2_y (\Orz)} \\
& \lesssim t^{\frac{\rho}{2}} \| P_{\le C t^{\rho}} \dt U \|_{L^2_y (\Orz)} + t^{\frac{\rho}{2}-1} \| P_{\sim C t^{\rho}} U \|_{L^2_y (\Orz)} \\
& \lesssim t^{\frac{3}{2}\rho-\frac{11}{10}} \| \Lambda u \|_{L^2_x} + t^{\frac{\rho}{2}-\frac{9}{10}} \sum _{\substack{N \in 2^{\delta \mathbb{Z}} \\ N \sim C t^{\rho-\frac{1}{5}}}} \| \uep_N \|_{L^2} \\
& \lesssim \eps t^{-1-\min (-\frac{3}{2}\rho+\frac{1}{10}-\eps, \frac{7}{2}\rho)}.
\end{align*}
Furthermore, \eqref{est:ueperr}, \eqref{uepNl2}, and Lemma \ref{lem:Xtilde} yield
\begin{align*}
&
\begin{aligned}
& \| P_{> C t^{\rho}} U \|_{L^{\infty}_y (\Orz)} \\
& \lesssim t^{\frac{1}{5}} \Bigg( \sum _{\substack{N \in 2^{\delta \mathbb{Z}} \\ N > C t^{\rho-\frac{1}{5}}}} N \| \uep_N \|_{L^2}^2 \Bigg)^{\frac{1}{2}} + t^{\frac{1}{5}} \sum _{\substack{N \in 2^{\delta \mathbb{Z}} \\ N > C t^{\rho-\frac{1}{5}}}} \| (1-\PN) |\dx|^{\frac{1}{2}} \uep_N \|_{L^2} \\
& \lesssim t^{-\frac{7}{2} \rho} \eps,
\end{aligned}
\\
&
\begin{aligned}
& \| P_{> C t^{\rho}} U \|_{L^2_y (\Orz)} \\
& \lesssim t^{\frac{1}{10}} \Bigg( \sum _{\substack{N \in 2^{\delta \mathbb{Z}} \\ N > C t^{\rho-\frac{1}{5}}}} \| \uep_N \|_{L^2}^2 \Bigg)^{\frac{1}{2}} + t^{\frac{1}{10}} \sum _{\substack{N \in 2^{\delta \mathbb{Z}} \\ N > C t^{\rho-\frac{1}{5}}}} \| (1-\PN) \uep_N \|_{L^2} \\
& \lesssim t^{-4\rho} \eps.
\end{aligned}
\end{align*}
By setting $\rho := \frac{1}{5} (\frac{1}{10}-\eps)$, there exists $Q \in L^{\infty}_y (\R)$ such that
\[
\| Q \|_{L^{\infty}} \lesssim \eps, \quad
\| U - Q \|_{L^{\infty}_y (\Orz)} \lesssim \eps t^{-\frac{7}{2}\rho}, \quad
\| U - Q \|_{L^2_y (\Orz)} \lesssim \eps t^{-4\rho}.
\]
Because Lemma \ref{lem:Xtilde} implies
\[
\| \dy^4 U + y U + 5\alpha (2 U^2 \dy^2 U + 3 U (\dy U)^2) + 5\beta U^5 \|_{L^2_y}
= \| (\Lambda u) (t,t^{\frac{1}{5}}y) \|_{L^2_y}
\lesssim t^{\eps-\frac{1}{10}},
\]
taking the limit as $t \rightarrow \infty$, we have that $Q$ is a solution to \eqref{eq:self}.

\section*{Acknowledgment}

The author would like to thank Prof. Takamori Kato for fruitful discussions about this work.
This work was supported by JSPS KAKENHI Grant number JP16K17624 and Alumni Association ``Wakasatokai'' of Faculty of Engineering, Shinshu University.


\begin{thebibliography}{99}

\bibitem{CLMW09}
W. Chen, J. Li, C. Miao, and J. Wu,
Low regularity solutions of two fifth-order KdV type equations,
J. Anal. Math. 107 (2009), 221--238.

\bibitem{CKSTT03}
J. Colliander, M. Keel, G. Staffilani, H. Takaoka, and T. Tao,
Sharp global well-posedness for KdV and modified KdV on $\R$ and $\mathbb{T}$,
J. Amer. Math. Soc. 16 (2003), no. 3, 705--749.

\bibitem{DeiZho93}
P. Deift and X. Zhou,
A steepest descent method for oscillatory Riemann-Hilbert problems. Asymptotics for the MKdV equation,
Ann. of Math. (2) 137 (1993), no. 2, 295--368. 

\bibitem{Gru10}
A. Gr\"{u}nrock,
On the hierarchies of higher order mKdV and KdV equations,
Cent. Eur. J. Math. 8 (2010), no. 3, 500--536.

\bibitem{Guo09}
Z. Guo,
Global well-posedness of Korteweg-de Vries equation in $H^{-3/4} (\R)$,
J. Math. Pures Appl. (9) 91 (2009), no. 6, 583--597.

\bibitem{HarG16}
B. Harrop-Griffiths,
Long time behavior of solutions to the mKdV,
Comm. Partial Differential Equations 41 (2016), no. 2, 282--317.

\bibitem{HIT}
B. Harrop-Griffiths, M. Ifrim, and D. Tataru,
The lifespan of small data solutions to the KP-I,
Int. Math. Res. Not. IMRN 2017, no. 1, 1--28.

\bibitem{HayNau98}
N. Hayashi and P. I. Naumkin,
Large time asymptotics of solutions to the generalized Korteweg-de Vries equation,
J. Funct. Anal. 159 (1998), no. 1, 110--136.

\bibitem{HayNau99}
N. Hayashi and P. I. Naumkin,
Large time behavior of solutions for the modified Korteweg-de Vries equation,
Internat. Math. Res. Notices 1999, no. 8, 395--418.

\bibitem{HayNau01}
N. Hayashi and P. I. Naumkin,
On the modified Korteweg-de Vries equation,
Math. Phys. Anal. Geom. 4 (2001), no. 3, 197--227.

\bibitem{HayNau16}
N. Hayashi and P. I. Naumkin,
Factorization technique for the modified Korteweg–de Vries equation,
SUT J. Math. 52 (2016), no. 1, 49--95.

\bibitem{IfrTat15}
M. Ifrim and D. Tataru,
Global bounds for the cubic nonlinear Schr\"odinger equation (NLS) in one space dimension,
Nonlinearity 28 (2015), no. 8, 2661--2675.

\bibitem{IfrTat16}
M. Ifrim and D. Tataru,
Two dimensional water waves in holomorphic coordinates II: global solutions,
Bull. Soc. Math. France 144 (2016), no. 2, 369--394.

\bibitem{KatPon88}
T. Kato and G. Ponce,
Commutator estimates and the Euler and Navier-Stokes equations,
Comm. Pure Appl. Math. 41 (1988), no. 7, 891--907.

\bibitem{KenPil16}
C. E. Kenig and D. Pilod,
Local well-posedness for the KdV hierarchy at high regularity,
Adv. Differential Equations 21 (2016), no. 9-10, 801--836.

\bibitem{KPV89}
C. E. Kenig, G. Ponce, and L. Vega,
On the (generalized) Korteweg-de Vries equation,
Duke Math. J. 59 (1989), no. 3, 585--610.

\bibitem{KPV91}
C. E. Kenig, G. Ponce, and L. Vega,
Oscillatory integrals and regularity of dispersive equations,
Indiana Univ. Math. J. 40 (1991), no. 1, 33--69.

\bibitem{KPV93}
C. E. Kenig, G. Ponce, and L. Vega,
Well-posedness and scattering results for the generalized Korteweg-de Vries equation via the contraction principle,
Comm. Pure Appl. Math. 46 (1993), no. 4, 527--620.

\bibitem{KPV94}
C.E. Kenig, G. Ponce, and L. Vega,
On the hierarchy of the generalized KdV equations,
Singular limits of dispersive waves (Lyon, 1991), 347--356,
NATO Adv. Sci. Inst. Ser. B Phys., 320, Plenum, New York, 1994. 

\bibitem{Kis09}
N. Kishimoto,
Well-posedness of the Cauchy problem for the Korteweg-de Vries equation at the critical regularity,
Differential Integral Equations 22 (2009), no. 5--6, 447--464.

\bibitem{Kwo08}
S. Kwon,
Well-posedness and ill-posedness of the fifth-order modified KdV equation,
Electron. J. Diff. Eqns., Vol. 2008(2008), 1, 1--15.

\bibitem{Kwo09}
S. Kwon,
On the fifth-order KdV equation: local well-posedness and lack of uniform continuity of the solution map,
J. Differential Equations 245 (2008), no. 9, 2627--2659.

\bibitem{O}
M. Okamoto,
Large time asymptotics of solutions to the short-pulse equation,
NoDEA Nonlinear Differential Equations Appl. 24 (2017), no. 4, 24:42.

\bibitem{Tao}
T. Tao,
Nonlinear dispersive equations.
Local and global analysis.
CBMS Regional Conference Series in Mathematics, 106, American Mathematical Society, 2006. 
\end{thebibliography}
\end{document}